\renewcommand{\today}{\the\day/\the\month/\the\year}
\DeclareMathAlphabet\EuR{U}{eur}{m}{n}
\SetMathAlphabet\EuR{bold}{U}{eur}{b}{n}
\newcommand{\curs}{\EuR}
\newcommand{\catdef}[2][]{\expandafter\newcommand\csname#2\endcsname%
{#1\curs{#2}}}
\renewcommand{\mod}{\textup{-}\curs{mod}}
\let\Gamma=\varGamma
\let\Omega=\varOmega
\let\Sigma=\varSigma
\renewenvironment{enumerate}[1][]
{\begin{enumerat}[#1]\setlength{\itemsep}{6pt}}{\end{enumerat}}
\newenvironment{enuma}{\begin{enumerate}[{\rm(a) }]}{\end{enumerate}}
\newenvironment{enumi}{\begin{enumerate}[{\rm(i) }]}{\end{enumerate}}
\renewenvironment{itemize}
{\begin{itemiz}\setlength{\itemsep}{6pt} \setlength{\itemindent}{-5pt} }
{\end{itemiz}}
\definecolor{darkgreen}{rgb}{0,0.5,0}
\definecolor{bluegreen}{rgb}{0,0.2,0.8}
\definecolor{darkred}{rgb}{0.8,0,0}
\definecolor{newercolor}{rgb}{0.2,0,1}
\definecolor{darkyellow}{rgb}{0.7,0.7,0}
\definecolor{darkorange}{rgb}{0.8,0.4,0}
\newcommand{\mynote}[1]{{\color{blue}\noindent\textbf{\textup{[#1]}}}}
\numberwithin{table}{section}
\newlength{\short}
\newcommand{\dbl}[2]{%
\renewcommand{\arraystretch}{1.0}\renewcommand{\arraycolsep}{0pt}%
\begin{array}{r}\rule{0pt}{12pt}#1\\#2\end{array}}
\newcommand{\4}[1]{\widebar{#1}}
\newcommand{\5}[1]{\widehat{#1}}
\newcommand{\9}[1]{{}^{#1}\!}
\def\pair[#1,#2]{[\hskip-1.5pt[#1,#2]\hskip-1.5pt]}
\let\oldcirc=\circ
\renewcommand{\circ}{\mathchoice
    {\mathbin{\scriptstyle\oldcirc}}{\mathbin{\scriptstyle\oldcirc}}
    {\mathbin{\scriptscriptstyle\oldcirc}}
    {\mathbin{\scriptscriptstyle\oldcirc}}}
\def\beq#1\eeq{\begin{equation*}#1\end{equation*}}
\def\beqq#1\eeqq{\begin{equation}#1\end{equation}}
\numberwithin{equation}{section}
\newtheorem{Thm}{Theorem}[section]
\newtheorem{Prop}[Thm]{Proposition}
\newtheorem{Cor}[Thm]{Corollary}
\newtheorem{Lem}[Thm]{Lemma}
\newtheorem{Th}{Theorem}
\theoremstyle{definition}
\newtheorem{Defi}[Thm]{Definition}
\newtheorem{Ex}[Thm]{Example}
\newcommand{\widebar}[1]
      {\overset{{\mskip1mu\leaders\hrule height0.4pt\hfill\mskip1mu}}{#1}
      \vphantom{#1}}
\newcounter{let} \setcounter{let}{0}
\loop\stepcounter{let}
\edef\csname cal\alph{let}\endcsname%
\loop\stepcounter{let}
\edef\csname scr\alph{let}\endcsname%
\newcommand{\tdef}[2][]{\expandafter\newcommand\csname#2\endcsname%
{#1\textup{#2}}}
\newcommand{\colim}{\mathop\textup{colim}\limits}
\newcommand{\Fix}[2]{\textup{Fix}(#1,#2)}   %%\Fix{G}{X}
\newcommand{\fdef}[1]{\expandafter\newcommand\csname#1\endcsname%
{\mathfrak{#1}}}
\newcommand{\bbdef}[1]{\expandafter\newcommand% 
\csname#1\endcsname{\mathbb{#1}}}
\newcommand{\itdef}[1]{\expandafter\newcommand\csname#1\endcsname%
{\textit{#1}}}
\newcommand{\sminus}{\smallsetminus}
\newcommand{\lie}[3]{\def\test{#2}\def\tst{G}\ifx\test\tst{{}^{#1}#2_{#3}}
\else{{}^{#1}\!#2_{#3}}\fi}
\renewcommand{\*}{\,\lower6pt\hbox{\Large{\textup{*}}}\,}
\newcommand{\syl}[3][]{\textup{Syl}^{#1}_{#2}(#3)}
\newcommand{\sylp}[2][]{\syl[#1]{p}{#2}}
\renewcommand{\Im}{\textup{Im}}
\newcommand{\defeq}{\overset{\textup{def}}{=}}
\newcommand{\mxfoura}[8]{\left(\begin{smallmatrix}#1&#2&#3&#4\\#5&#6&#7&#8}
\newcommand{\mxfourb}[8]{\\#1&#2&#3&#4\\#5&#6&#7&#8\end{smallmatrix}\right)}
\renewcommand{\:}{\colon}
\newcommand{\pcom}{{}^\wedge_p}
\newcommand{\nsg}{\trianglelefteq}
\let\too=\longrightarrow
\let\xto=\xrightarrow
\let\fromm=\longleftarrow
\newcommand{\gen}[1]{{\langle}#1{\rangle}}
\newcommand{\longleft}[1]{\;{\leftarrow%
\count255=0 \loop \mathrel{\mkern-6mu}%
    \relbar\advance\count255 by1\ifnum\count255<#1\repeat}\;}
\newcommand{\longright}[1]{\;{\count255=0 \loop \relbar\mathrel{\mkern-6mu}%
    \advance\count255 by1\ifnum\count255<#1\repeat\rightarrow}\;}
\newcommand{\Right}[2]{\overset{#2}{\longright#1}}
\newcommand{\RIGHT}[3]{\mathrel{\mathop{\kern0pt\longright#1}
        \limits^{#2}_{#3}}}
\newcommand{\LEFT}[3]{\mathrel{\mathop{\kern0pt\longleft#1}\limits^{#2}_{#3}}
}
\newcommand{\dRIGHT}[3]{\mathrel{%
   \mathop{\vcenter{\baselineskip=0pt\hbox{$\kern0pt\longright#1$}%
   \hbox{$\kern0pt\longright#1$}}}\limits^{#2}_{#3}}}
\newcommand{\LRIGHT}[3]{\mathrel{%
   \mathop{\vcenter{\baselineskip=0pt\hbox{$\kern0pt\longleft#1$}%
   \hbox{$\kern0pt\longright#1$}}}\limits^{#2}_{#3}}}
\newcommand{\RLEFT}[3]{\mathrel{%
   \mathop{\vcenter{\baselineskip=0pt\hbox{$\kern0pt\longright#1$}%
   \hbox{$\kern0pt\longleft#1$}}}\limits^{#2}_{#3}}}
\newcommand{\onto}[1]{\;{\count255=0 \loop \relbar\mathrel{\mkern-6mu}%
    \advance\count255 by1
    \ifnum\count255<#1 \repeat \twoheadrightarrow}\;}
\newcommand{\dn}{{\downarrow}}
\newcommand{\longline}{\bigskip\hfill\hbox to 8cm{\hrulefill}%
\hfill\bigskip}
\def\LFS(#1){\textup{LFS($#1$)}} 
\def\LF(#1){\textup{LF($#1$)}} 
\newcommand{\higherlim}[2]{\displaystyle\setbox1=\hbox{\rm lim}
	\setbox2=\hbox to \wd1{\leftarrowfill} \ht2=0pt \dp2=-1pt
	\setbox3=\hbox{$\scriptstyle{#1}$}
	\ifdim\wd1<\wd3
	\mathop{\hphantom{^{#2}}\vtop{\baselineskip=5pt\box1\box2}^{#2}}_{#1}
	\else
	\mathop{\vtop{\baselineskip=5pt\box1\box2}^{#2}}_{#1}
	\fi}
\newcommand{\higherlim}[2]{\displaystyle\setbox1=\hbox{\rm lim}
	\setbox3=\hbox{$\scriptstyle{#1}$}
	\ifdim\wd1<\wd3
	\mathop{\vtop{\baselineskip=5pt\box1}^{#2}}_{#1\hphantom{#2}}
	\else
	\mathop{\vtop{\baselineskip=5pt\box1}^{#2}}_{#1\hphantom{#2}}
	\fi}
\begin{document}

\title{Limits over orbit categories of locally finite groups}

\author{Bob Oliver}
\address{Universit\'e Sorbonne Paris Nord, LAGA, UMR 7539 du CNRS, 
99, Av. J.-B. Cl\'ement, 93430 Villetaneuse, France.}
\email{bobol@math.univ-paris13.fr}
\thanks{B. Oliver is partially supported by UMR 7539 of the CNRS}

\begin{abstract} 
We correct an error in the paper \cite{BLO3}, and take the opportunity to 
examine in more detail the derived functors of inverse limits over orbit 
categories of (infinite) locally finite groups. The main results show how 
to reduce this in many cases to limits over orbit categories of finite 
groups, but we also look at generalizations of the Lyndon-Hochschild-Serre 
spectral sequence for higher limits over orbit categories for an extension 
of locally finite groups. 
\end{abstract}

%%\date{\today}

\subjclass[2020]{Primary 20F50, 18A30. Secondary 18G10, 20J06, 18G40} 
\keywords{Locally finite groups, inverse limits, derived functors, group 
cohomology, spectral sequences.}

%%\noindent\today

\maketitle

\bigskip

%%% \xa=\mathbf{a}, etc.
%%\newcounter{let} 
\setcounter{let}{0}
\loop\stepcounter{let}
\expandafter\edef\csname x\Alph{let}\endcsname%
{\noexpand\mathbf{\Alph{let}}}
\ifnum\thelet<26\repeat

\newcommand{\xd}{\textbf{\textit{d}}}

\iffalse
\newcommand{\xC}{\textbf{\textit{C}}}
\newcommand{\xZ}{\textbf{\textit{Z}}}
\newcommand{\xB}{\textbf{\textit{B}}}
\newcommand{\xH}{\textbf{\textit{H}}}
\newcommand{\xI}{\textbf{\textit{I}}}
\fi

\section*{Introduction}

The orbit category of a group $G$ is the category $\calo(G)$ 
whose objects are the subgroups of $G$, and where a morphism from $U$ 
to $V$ is a $G$-map $G/U\too G/V$ between the corresponding orbits. (A 
different, but equivalent definition of the morphism sets is given in 
Definition \ref{d:orbit.cat}.) When $p$ is a 
prime, $\calo_p^f(G)\subseteq\calo_p(G)\subseteq\calo(G)$ denote the 
full subcategories whose objects are the finite $p$-subgroups, and all 
$p$-subgroups, respectively, of $G$. Here, by a $p$-group, we mean a 
group each of whose elements has (finite) order a power of $p$. 

When $G$ is a group and $M$ is a $\Z G$-module, let 
$F_M\:\calo_p(G)\op\too\Ab$ be the functor defined by setting $F_M(P)=0$ if 
$P\ne1$, and $F_M(1)=M$ with the given action of $\Aut_{\calo_p(G)}(1)\cong 
G$. Graded groups $\Lambda^*(G;M)\defeq\lim_{\calo_p(G)}^*(F_M)$ were 
defined by Jackowski, McClure, and Oliver in \cite{JMO} (at least for 
finite $G$), and they play an important role (as described in Proposition 
\ref{p:lim*=Lbda}) when describing right derived functors of inverse limits 
of arbitrary functors on $\calo_p(G)$. We refer to these higher derived 
functors as ``higher limits'' for short. 

These functors appeared again in work by Broto, Levi, and Oliver, mostly 
for finite groups, but also in \cite{BLO3} when $G$ is a (possibly 
infinite) locally finite group. Unfortunately, there was an error in the 
proof of \cite[Lemma 5.12]{BLO3} (see the discussion before Theorem 
\ref{t:5.12} below), and the original purpose of this paper was to correct 
that proof. Lemma 5.12 in \cite{BLO3} was needed to prove Theorems 8.7 and 
8.10 in the same paper, which say among other things that 
$|\call_S^c(G)|\pcom\simeq BG\pcom$ for every torsion group $G$ that is 
linear in characteristic different from $p$. We correct the proof of that 
lemma in Theorem \ref{t:5.12} in this paper (see also Theorem \ref{ThD}). 
The result in \cite{BLO3} for which this is needed is also restated here as 
Theorem \ref{t:blo3-8.7}. 

While fixing the error in \cite{BLO3}, we also take the opportunity to 
develop more of the theory of higher limits over orbit categories, and in 
particular the graded groups $\Lambda^*(G;M)$, for infinite groups $G$, 
especially when $G$ is locally finite (i.e., every finitely generated 
subgroup of $G$ is finite). For example, one of our main results is the 
following, where $\Fin(G)$ is the poset of finite subgroups of a group $G$.

\begin{Th} \label{ThA}
Fix a prime $p$, and let 
$G$ be a locally finite group. Then for each $\Z G$-module $M$, 
there is a first quarter spectral sequence
	\[ E_2^{ij} \cong \higherlim{\Fin(G)}i \bigl( \Lambda^j(-;M) \bigr) 
	\implies \Lambda^{i+j}(G;M) . \]
\end{Th}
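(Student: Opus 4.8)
The plan is to build the spectral sequence as the Grothendieck spectral sequence for a composite of functors, using the fact that $\calo_p(G)$ can be expressed as a "limit" (in an appropriate sense) of its full subcategories on $p$-subgroups that lie in finite subgroups of $G$. More precisely, since $G$ is locally finite, every finite $p$-subgroup $P\le G$ is contained in some finite subgroup $H\in\Fin(G)$, and one has the Grothendieck construction / homotopy colimit description
\[
\calo_p(G) \;\simeq\; \hocolim_{H\in\Fin(G)} \calo_p(H),
\]
where the maps are the evident inclusions $\calo_p(H)\to\calo_p(H')$ for $H\le H'$. Dualizing, a functor $F\:\calo_p(G)\op\too\Ab$ restricts to a compatible family $\{F|_{\calo_p(H)\op}\}_{H\in\Fin(G)}$, and inverse limit over $\calo_p(G)$ factors as: first take $\invlim{\calo_p(H)\op}$ for each $H$, obtaining a functor on $\Fin(G)\op$, then take $\invlim{\Fin(G)\op}$ of that. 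The first step I would carry out is to make this factorization precise on the level of derived functors, i.e. to produce a Grothendieck-type spectral sequence
\[
\higherlim{\Fin(G)}i \Bigl( H\mapsto \invlim{\calo_p(H)}^j\bigl(F|_{\calo_p(H)}\bigr) \Bigr) \;\implies\; \invlim{\calo_p(G)}^{i+j}(F),
\]
which requires checking that restriction along the inclusions $\calo_p(H)\hookrightarrow\calo_p(G)$ sends injectives to objects that are acyclic for $\invlim{\Fin(G)}$ — equivalently, that these restriction functors have exact left adjoints (left Kan extension), which holds because each $\calo_p(H)\to\calo_p(G)$ is injective on objects and the overcategories involved are discrete.

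The second step is to specialize $F=F_M$, the functor concentrated at the trivial subgroup with value $M$. The restriction $F_M|_{\calo_p(H)}$ is exactly the functor $F_M^H$ whose higher limits over $\calo_p(H)$ compute $\Lambda^*(H;M)$, by the very definition recalled in the introduction. Therefore the inner term $H\mapsto \invlim{\calo_p(H)}^j(F_M|_{\calo_p(H)})$ is precisely the functor $\Lambda^j(-;M)\:\Fin(G)\op\to\Ab$ appearing in the statement, and the abutment is $\invlim{\calo_p(G)}^{*}(F_M)=\Lambda^*(G;M)$ by definition of $\Lambda^*(G;M)$ for infinite $G$. This substitution is essentially formal once the functoriality of $\Lambda^j(H;M)$ in $H\in\Fin(G)$ is pinned down, i.e. that for $H\le H'$ the restriction map $\Lambda^j(H';M)\to\Lambda^j(H;M)$ is the one induced by $\calo_p(H)\hookrightarrow\calo_p(H')$ — which is exactly what the construction gives.

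The main obstacle I expect is the first step: justifying the homotopy-colimit decomposition of $\calo_p(G)$ and, more importantly, turning it into a statement about derived inverse limits with the right $E_2$-term. The subtlety is that $\calo_p(G)$ is not literally $\colim_H\calo_p(H)$ on nose in a way that commutes with taking higher limits — one must either (i) use that $\Fin(G)$ is a filtered poset and that inverse limits over a filtered colimit of categories can be computed via a Bousfield–Kan style spectral sequence, or (ii) identify $\invlim{\calo_p(G)}$ with the composite $\invlim{\Fin(G)\op}\circ R$ where $R$ is the "restrict to each $\calo_p(H)$" functor into the category of $\Fin(G)\op$-diagrams of abelian groups, and verify that $R$ preserves injectives. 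Approach (ii) is cleanest: $R$ has a left adjoint given by left Kan extension along the projection from the Grothendieck construction $\calo_p(H)\wr\Fin(G)\to\calo_p(G)$, this projection induces an equivalence after realization (since each fiber is the nerve of a category with initial-type structure coming from $\Fin(G)$ being filtered), hence $R$ is exact and preserves injectives, giving the Grothendieck spectral sequence directly. The remaining routine points — that the spectral sequence lies in the first quadrant (both $\Fin(G)$ and the orbit categories have the relevant finiteness/positivity so higher limits vanish in negative degrees) and convergence (which is automatic for a first-quadrant spectral sequence) — I would dispatch briefly at the end.
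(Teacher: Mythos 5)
Your argument has a genuine gap at its very first step: the decomposition $\calo_p(G)\simeq\hocolim_{H\in\Fin(G)}\calo_p(H)$ is false. For $H\in\Fin(G)$ the category $\calo_p(H)$ has only finite $p$-subgroups as objects, so the union of these subcategories is $\calo_p^f(G)$, not $\calo_p(G)$: an infinite locally finite group $G$ will in general contain infinite $p$-subgroups (for instance an infinite locally finite $p$-subgroup such as $\bigoplus_1^\infty C_p$), and these are objects of $\calo_p(G)$ lying in no $\calo_p(H)$. Consequently what your construction produces, once the technical points are settled, is a spectral sequence converging to $\Lambda^*_f(G;M)=\lim^*_{\calo_p^f(G)}(F_M)$ --- that is Theorem \ref{t:spseq.Lambda} of the paper --- and not to the stated abutment $\Lambda^*(G;M)=\lim^*_{\calo_p(G)}(F_M)$. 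The missing ingredient is exactly the comparison $\Lambda^*(G;M)\cong\Lambda^*_f(G;M)$, which is Theorem \ref{ThB} (= Theorem \ref{t:Lb=Lbf}) and is not formal: the paper proves it by a separate Grothendieck-type spectral sequence for the right Kan extension along $\calo_p^f(G)\too\calo_p(G)$ (Theorem \ref{t:G/H} with $H=1$), whose collapse depends on the vanishing $\Lambda^*_f(K;M)=0$ for every nontrivial locally finite $p$-group $K$ (Proposition \ref{p:lim*(OpG)}(c)). None of this appears in your outline, and it cannot be dispatched as routine.

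On the part you do address --- relating limits over $\calo_p^f(G)$ to limits over $\Fin(G)$ of the objectwise higher limits --- your composite-functor strategy is a legitimate alternative to the paper's route (Proposition \ref{p:spseq0}, which works with the bar resolution and the hyperderived spectral sequence of Lemma \ref{l:lim*(C)=0}), but two justifications are too quick. Identifying the derived functors of your functor $R$ objectwise as $H\mapsto\lim^j_{\calo_p(H)}(F|_{\calo_p(H)})$ requires knowing that restriction of an injective module to $\calo_p(H)$ remains injective (or at least acyclic); the paper proves the analogous statement in \eqref{e:Rqk(FM)-2} by an explicit coset decomposition of the cogenerators $\I_L^A$, and your appeal to ``discrete overcategories'' does not establish it. Likewise, the exactness and acyclicity statements over $\Fin(G)$ are precisely where directedness of $\Fin(G)$, i.e.\ local finiteness of $G$, must enter, and that is the point to make precise rather than the homotopy-colimit picture.
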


Theorem \ref{ThA} follows from the spectral sequence of Theorem 
\ref{t:spseq.Lambda}, together with Theorem \ref{ThB}. Theorem 
\ref{t:spseq.Lambda} is a special case of Proposition \ref{p:spseq.Phi}, 
which involves higher limits of functors on orbit categories with more 
general sets of objects. Proposition \ref{p:spseq.Phi} is in turn a special 
case of the still more general Proposition \ref{p:spseq0}.

The next theorem is more technical, but is a key tool for many of the 
results in the paper. It is proven as Theorem \ref{t:Lb=Lbf}. Roughly, it 
says that $\Lambda^*$ is the same, whether we take limits with respect to 
all $p$-subgroups of $G$ or only the finite $p$-subgroups.

\begin{Th} \label{ThB}
Fix a prime $p$, and let $G$ be a group all of whose $p$-subgroups are 
locally finite. Then for each $\Z G$-module $M$,
	\[ \Lambda^*(G;M) \cong \Lambda^*_f(G;M) \defeq 
	\higherlim{\calo_p^f(G)}*(F_M). \]
\end{Th}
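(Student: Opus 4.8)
**Proof proposal for Theorem \ref{ThB} ($\Lambda^*(G;M)\cong\Lambda^*_f(G;M)$).**

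The plan is to compare the two orbit categories $\calo_p^f(G)\subseteq\calo_p(G)$ by means of the inclusion functor and the Cartan--Eilenberg type argument that reduces higher limits over a large category to higher limits over a suitable full subcategory. Since $F_M$ is supported on the trivial subgroup, both $\Lambda^*(G;M)$ and $\Lambda^*_f(G;M)$ are computed by a canonical injective (or, dually, a bar-type) resolution of $F_M$, and the natural restriction map $\Lambda^*(G;M)\to\Lambda^*_f(G;M)$ is induced by the inclusion of categories. First I would set up this comparison map explicitly and identify what must be shown: that the inclusion $\calo_p^f(G)\hookrightarrow\calo_p(G)$ is ``cohomologically cofinal'' for the relevant class of functors, i.e. that the left Kan extension along the inclusion, applied to $F_M$ viewed on $\calo_p^f(G)$, recovers $F_M$ on all of $\calo_p(G)$ up to higher limits. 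Concretely, the obstruction groups are the higher limits $\invlim{}{*}$ over the undercategories $P\backslash\calo_p^f(G)$ for $P$ a (possibly infinite) $p$-subgroup, with coefficients in the restriction of $F_M$.

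The key point is the local finiteness hypothesis on $p$-subgroups. For each $p$-subgroup $P\le G$, write $P=\colim_\alpha P_\alpha$ as the filtered colimit of its finite subgroups $P_\alpha$ (this uses that $P$ is locally finite). Then the comma category $P\backslash\calo_p^f(G)$ — equivalently the category of finite $p$-subgroups $Q$ equipped with a morphism $P\to Q$ in $\calo_p(G)$ — should be shown to have the homotopy type (or at least the higher-limit behavior) governed by this filtered system: a morphism $G/P\to G/Q$ with $Q$ finite factors through $G/P_\alpha$ for some $\alpha$, so the undercategory is a filtered union of the undercategories $P_\alpha\backslash\calo_p^f(G)$, each of which is the orbit-category undercategory of a \emph{finite} $p$-group and hence has an initial object (namely $P_\alpha$ itself, via the identity). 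Passing to the colimit, I would argue that $\invlim{P\backslash\calo_p^f(G)}{*}(F_M)$ vanishes in positive degrees and equals $F_M(P)$ in degree $0$ — using that $F_M$ is itself ``locally constant along the trivial subgroup'' in the precise sense that $F_M(Q)=0$ unless $Q=1$, which makes the relevant limits collapse. (When $P\neq1$ this says the undercategory limit is $0$; when $P=1$ it is $M$.) With this cofinality established, the standard spectral sequence of the composite (Kan extension followed by limit) degenerates and yields $\Lambda^*(G;M)\cong\Lambda^*_f(G;M)$.

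The main obstacle I expect is controlling the higher limits over the \emph{infinite} filtered colimit of undercategories: even though each $P_\alpha\backslash\calo_p^f(G)$ is well-behaved, $\invlim{}{*}$ does not in general commute with filtered colimits of index categories, so one must either exhibit an explicit contracting homotopy on the relevant bar complexes compatible with the filtration, or invoke a Mittag-Leffler / $\lim^1$-vanishing argument for the tower of finite pieces. The cleanest route is probably to avoid undercategories altogether and instead work directly with the two-sided bar resolutions: exhibit, for the bar complex computing $\invlim{\calo_p(G)}{*}(F_M)$, an explicit comparison with the subcomplex coming from $\calo_p^f(G)$, and show the quotient complex is acyclic by a chain homotopy built from choosing, for each finite tuple of composable morphisms out of a $p$-subgroup $P$, a finite subgroup $P_\alpha$ through which they all factor. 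This is where the hypothesis that \emph{all} $p$-subgroups (not just finite ones) are locally finite is used, and where the bookkeeping is heaviest; everything else is a formal consequence of the general machinery on higher limits over orbit categories already set up in the paper (Proposition \ref{p:lim*=Lbda} and its surrounding results).
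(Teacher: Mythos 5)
Your proposal founders at its central step. For an \emph{infinite} $p$-subgroup $P\le G$, the undercategory $P{\downarrow}\cali$ for the inclusion $\cali\:\calo_p^f(G)\too\calo_p(G)$ is \emph{empty}: a morphism $P\to Q$ in $\calo_p(G)$ is a coset $Qg$ with $\9gP\le Q$, which cannot exist when $Q$ is finite. In particular it is not a filtered union of the undercategories $P_\alpha{\downarrow}\cali$ (a filtered union of nonempty categories is nonempty), and no morphism $G/P\to G/Q$ ``factors through $G/P_\alpha$'' because there are no such morphisms to factor. What is true is that $\Mor_{\calo_p(G)}(P,Q)$ is an \emph{inverse} limit over $\alpha$ of (images of) the sets $\Mor_{\calo_p(G)}(P_\alpha,Q)$, and the failure of surjectivity in exactly this kind of inverse system is the error in \cite[Lemma 5.12]{BLO3} that this paper is correcting (see the discussion before Theorem \ref{t:5.12}). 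The same problem sinks your bar-complex variant: there are no composable morphisms out of an infinite $P$ landing in finite subgroups, so the proposed contracting homotopy has nothing to work with. More fundamentally, no purely categorical cofinality property of the inclusion can be the mechanism: the conclusion $\higherlim{\calo_p(G)}*(\Phi)\cong\higherlim{\calo_p^f(G)}*(\Phi|_{\calo_p^f(G)})$ is \emph{false} for general $\Phi$ (take $\Phi=\I_P^A$ with $P$ infinite: the left side is $A$ in degree $0$, while the restriction to $\calo_p^f(G)$ is the zero functor), so the special form of $F_M$ must enter quantitatively, not merely as a remark that ``the limits collapse.''

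The argument that works runs through the other comma category. One applies the right Kan extension along $\cali$ (Theorem \ref{t:G/H} with $H=1$, $X$ the finite and $Y$ all $p$-subgroups); its derived functors are computed on the \emph{over}categories $\cali{\downarrow}K\simeq\calo_p^f(K)$, which are nonempty, and the resulting Grothendieck spectral sequence reads $E_2^{ij}=\higherlim{\calo_p(G)}i(\Lambda^j_f(-;M))\Rightarrow\Lambda^{i+j}_f(G;M)$. The entire content is then the vanishing $\Lambda^*_f(K;M)=0$ for every nontrivial locally finite $p$-group $K$ (Proposition \ref{p:lim*(OpG)}(c)), proved via the spectral sequence over the directed poset $\Fin(K)$ together with a genuine cofinality argument inside $\Fin(K)$ (the finite subgroups containing a fixed nontrivial subgroup form a cofinal subposet on which the relevant functor vanishes). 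Your proposal never isolates or proves this vanishing statement, which is where the hypothesis that all $p$-subgroups are locally finite is actually used.
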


We also construct a spectral sequence that describes the behavior of the 
functors $\Lambda^*(-;-)$ under an extension of locally finite groups.

\begin{Th} \label{ThC}
Fix a prime $p$, a locally finite group $G$, a normal subgroup $H\nsg G$, 
and a $\Z G$-module $M$. Let $\chi\:G\too G/H$ be the natural map. Then 
there is a first quarter spectral sequence 
	\[ E_2^{ij} = \higherlim{\calo_p(G/H)}i(\Lambda^j(\chi^{-1}(-);M)) 
	\Longrightarrow \Lambda^{i+j}(G;M) . \] 
\end{Th}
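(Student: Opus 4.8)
The plan is to obtain the spectral sequence as the Grothendieck composite‑functor spectral sequence for the functor between orbit categories induced by $\chi$. The surjection $\chi\:G\to G/H$ induces a functor $\chi_*\:\calo_p(G)\to\calo_p(G/H)$ sending a $p$-subgroup $P\le G$ to $\chi(P)=PH/H$ (again a $p$-group, being a quotient of one) and sending the morphism $G/P\to G/Q$ given by a coset $gQ$ with $Pg\subseteq gQ$ to the morphism $(G/H)/\chi(P)\to(G/H)/\chi(Q)$ given by $\chi(g)\chi(Q)$. Restriction of $\Ab$-valued functors along $\chi_*$ is exact, so its right adjoint $\Psi$ (right Kan extension along $\chi_*$) sends injective functors to injective — hence $\invlim{\calo_p(G/H)}$-acyclic — ones; and $\invlim{\calo_p(G)}=\invlim{\calo_p(G/H)}\circ\Psi$, since both sides are the right Kan extension of the given functor along $\calo_p(G)\op\to\ast$ and right Kan extensions compose. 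The Grothendieck spectral sequence of this composite then reads
\[ E_2^{ij}=\higherlim{\calo_p(G/H)}i\bigl((R^j\Psi)(F_M)\bigr)\implies\higherlim{\calo_p(G)}{i+j}(F_M)=\Lambda^{i+j}(G;M) . \]

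The heart of the argument is to identify $(R^j\Psi)(F_M)(\overline P)\cong\Lambda^j(\chi^{-1}(\overline P);M)$ for each $p$-subgroup $\overline P\le G/H$. Since evaluation at $\overline P$ and restriction along the projection $\pi\:(\chi_*\downarrow\overline P)\to\calo_p(G)$ are both exact, and $\Psi$ is computed pointwise by the comma category (the formula $\textup{ev}_{\overline P}\circ\Psi=\invlim{(\chi_*\downarrow\overline P)}\circ\pi^*$), one gets $(R^j\Psi)(F_M)(\overline P)=\higherlim{\chi_*\downarrow\overline P}j(F_M\circ\pi)$, where $\chi_*\downarrow\overline P$ has objects the pairs $(Q,f)$ with $Q\le G$ a $p$-subgroup and $f\in\Mor_{\calo_p(G/H)}(\chi(Q),\overline P)$. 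I claim this comma category is equivalent, over $\calo_p(G)$, to $\calo_p(\chi^{-1}(\overline P))$: the full subcategory on the pairs $(Q,f)$ with $Q\le\chi^{-1}(\overline P)$ and $f$ induced by the inclusion $\chi(Q)\le\overline P$ is isomorphic to $\calo_p(\chi^{-1}(\overline P))$ — a coset $gQ'$ with $Qg\subseteq gQ'$ defines such a morphism precisely when $\chi(g)\in\overline P$, i.e. $g\in\chi^{-1}(\overline P)$ — while every object $(Q,f)$ is isomorphic in $\chi_*\downarrow\overline P$ to one of this special form, obtained by lifting a coset representative $\overline g$ of $f$ to $g\in G$ and conjugating $Q$ into $\chi^{-1}(\overline P)$. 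Under this equivalence $F_M\circ\pi$ becomes exactly the functor $F_M$ for the group $\chi^{-1}(\overline P)$ (with $M$ restricted to a $\Z[\chi^{-1}(\overline P)]$-module), whence $\higherlim{\chi_*\downarrow\overline P}j(F_M\circ\pi)=\higherlim{\calo_p(\chi^{-1}(\overline P))}j(F_M)=\Lambda^j(\chi^{-1}(\overline P);M)$. Local finiteness of $G$ enters only here: it guarantees that $\chi^{-1}(\overline P)$, an extension of the locally finite group $\overline P$ by the locally finite group $H$, is itself locally finite, so the theory developed earlier applies to its $\Lambda$-groups.

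The one genuinely delicate point — and the step I expect to be the main obstacle — is the naturality of this identification in $\overline P$: one must check that $\overline P\mapsto(R^j\Psi)(F_M)(\overline P)$ agrees, as a functor on $\calo_p(G/H)\op$, with $\Lambda^j(\chi^{-1}(-);M)$. Because morphisms in $\calo_p(G/H)$ are cosets rather than homomorphisms, this first requires making $\overline P\mapsto\chi^{-1}(\overline P)$ appropriately functorial: a morphism represented by a coset $\overline g\,\overline{P'}$ is realized by conjugation by a lift $g\in G$, where $H\nsg G$ is needed to see that $g^{-1}\chi^{-1}(\overline P)g\le\chi^{-1}(\overline{P'})$, and independence of the choice of $g$ and of coset representative on the level of $\Lambda^*$ follows from the fact that inner automorphisms act trivially on higher limits over orbit categories. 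Granting that, one verifies that the comma‑category equivalences above assemble into the required natural isomorphism; this is routine once the variances are tracked carefully. Finally, the spectral sequence is first‑quadrant and hence strongly convergent: $E_2^{ij}=0$ for $i<0$ because higher limits vanish in negative degrees, and for $j<0$ because $\Lambda^j=0$ there. (Alternatively, the whole construction can be derived by specializing the general spectral sequences for functors on orbit categories with prescribed object sets that are established earlier in the paper.)
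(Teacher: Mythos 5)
Your strategy coincides with the paper's: Theorem \ref{ThC} is proved there as the special case (with $X$ and $Y$ the sets of \emph{all} $p$-subgroups of $G$ and of $G/H$) of the general spectral sequence of Theorem \ref{t:G/H}, which is built exactly as you propose — the Grothendieck spectral sequence for $\lim_{\calo_Y(G/H)}\circ\,\5\chi^R_*$, the commutation of the triangle via composition of right Kan extensions, preservation of injectives by $\5\chi^R_*$ because its left adjoint $\5\chi^*$ is exact, the equivalence of the comma category $\5\chi\dn\4K$ with $\calo_p(\chi^{-1}(\4K))$, and a naturality check in $\4K$ of the kind you describe.

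There is, however, one genuine gap, and it sits in the step you dispatch most quickly. From the pointwise formula $\ev_{\4P}\circ\Psi=\lim_{\chi_*\dn\4P}\circ\,\pi^*$ together with exactness of $\ev_{\4P}$ and of $\pi^*$ you conclude that $(R^j\Psi)(F_M)(\4P)=\higherlim{\chi_*\dn\4P}j(F_M\circ\pi)$. This does not follow. Exactness of evaluation gives $(R^j\Psi)(F_M)(\4P)=H^j\bigl(\lim_{\chi_*\dn\4P}(\pi^*I^\bullet)\bigr)$ for an injective resolution $I^\bullet$ of $F_M$ in $\calo_p(G)\mod$, and exactness of $\pi^*$ makes $\pi^*I^\bullet$ a resolution of $\pi^*F_M$; but to identify the cohomology of its limit with $\higherlim{\chi_*\dn\4P}j(\pi^*F_M)$ you must know that each $\pi^*I^n$ is \emph{acyclic} for $\lim_{\chi_*\dn\4P}$. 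Restriction along $\pi$ has a right adjoint, which is the wrong adjoint for preserving injectives, and the relevant comma categories are not directed, so Lemma \ref{l:lim*(C0)} does not apply either. Under your equivalence $\chi_*\dn\4P\simeq\calo_p(\chi^{-1}(\4P))$ the missing claim is exactly that the restriction of an injective $\calo_p(G)$-module to $\calo_p(K)$, $K=\chi^{-1}(\4P)$, is again injective. The paper proves this as \eqref{e:Rqk(FM)-2}: by Proposition \ref{p:I_c^A} it suffices to treat the generating injectives $\I_L^A$, and a choice $W$ of coset representatives for $K$ in $G$ yields a decomposition $\I_{\calo_X(G),L}^A|_{\calo_{X\cap K}(K)}\cong\prod_{w\in W,\,\9wL\le K}\I_{\calo_{X\cap K}(K),\9wL}^A$, a product of injectives. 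This coset computation is the technical heart of the paper's proof of Theorem \ref{t:G/H}(b); once you insert it, your argument closes up. The remaining points — the fullness and essential surjectivity of the embedding of $\calo_p(\chi^{-1}(\4P))$ into the comma category, the identification $F_M\circ\pi\cong F_M$, the naturality in $\4P$ via lifts of coset representatives, and the observation that local finiteness is needed only so that $\chi^{-1}(\4P)$ is again locally finite — are all handled correctly and agree with the paper.
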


Theorem \ref{ThC} is proven below as Theorem \ref{t:Lb.sp.seq}. It is a 
special case of Theorem \ref{t:G/H}: a more general spectral sequence 
involving higher limits over orbit spaces that also includes the 
Lyndon-Hochschild-Serre spectral sequence (see \cite[Theorem 
XI.10.1]{MacL-homol} 
or \cite[Theorem 6.8.2]{Weibel}) and Theorem \ref{ThB} as other special 
cases. 

Finally, we fill in the gap in \cite{BLO3} by proving the following 
result, shown below as Theorem \ref{t:5.12}.

\begin{Th} \label{ThD}
Fix a prime $p$. Let $G$ be a locally finite group, and let $M$ be an 
$\Z_{(p)}G$-module such that $C_G(M)$ contains an element of order $p$. 
Then $\Lambda^*(G;M)=0$. 
\end{Th}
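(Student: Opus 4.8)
The plan is to reduce to the case of finite groups — where the statement is classical — and then collapse the spectral sequence of Theorem~\ref{ThA}. Recall the finite case (see \cite{JMO}): if $F$ is a \emph{finite} group and $N$ a $\Z_{(p)}F$-module with $p\mid|C_F(N)|$, then $\Lambda^*(F;N)=0$; for finite $F$ this is precisely the present statement, by Cauchy's theorem. Granting it, I would fix $z\in C_G(M)$ of order $p$ and consider the full subposet
	\[ \calu \;=\; \set{F\in\Fin(G)}{z\in F}\ \subseteq\ \Fin(G). \]
First observe that the coefficient functor $\Lambda^j(-;M)\colon\Fin(G)\op\to\Ab$ of Theorem~\ref{ThA} restricts to $0$ on $\calu$: for $F\in\calu$ we have $z\in C_F(M)$, hence $p\mid|C_F(M)|$, hence $\Lambda^j(F;M)=0$ for all $j$ by the finite case.

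Next I would show that $\calu\hookrightarrow\Fin(G)$ is homotopy cofinal for inverse limits, so that restriction induces isomorphisms $\higherlim{\Fin(G)}i(\Phi)\cong\higherlim{\calu}i(\Phi|_{\calu})$ for every $\Phi\colon\Fin(G)\op\to\Ab$ and every $i\geq0$. By the usual criterion this reduces to checking, for each $F\in\Fin(G)$, that the poset $\set{F'\in\calu}{F\leq F'}=\set{F'\in\Fin(G)}{\gen{F,z}\leq F'}$ has contractible nerve; and it does, since it has a least element, namely $\gen{F,z}$. This is exactly where local finiteness of $G$ enters: it guarantees that the finitely generated subgroup $\gen{F,z}$ is finite, hence a member of $\calu$ lying above $F$. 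Applying the displayed isomorphism with $\Phi=\Lambda^j(-;M)$ and combining with the previous paragraph gives
	\[ E_2^{ij}\;=\;\higherlim{\Fin(G)}i\bigl(\Lambda^j(-;M)\bigr)\;\cong\;\higherlim{\calu}i(0)\;=\;0 \]
for all $i,j$, so the spectral sequence of Theorem~\ref{ThA} degenerates and $\Lambda^{*}(G;M)=0$.

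The main obstacle is the finite case itself, which I would quote rather than reprove: there the vanishing is the familiar fact that $\Lambda^{*}(F;M)=0$ whenever $C_F(M)$ is not a $p'$-group, and it rests ultimately on the description of $\Lambda^{*}$ in terms of the $p$-subgroup complex of $F$. One might hope to make the argument self-contained by an induction using Theorem~\ref{ThC} with $H=C_G(M)$ — the groups $\chi^{-1}(\bar P)$ attached to proper $p$-subgroups $\bar P$ of $G/C_G(M)$ are proper subgroups of $G$ that still contain $z$ in their centralizer of $M$ — but that recursion only closes up when $G/C_G(M)$ is not a $p$-group, so $p$-group-type base cases would still need the classical computation; quoting the finite case is cleaner. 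The only genuinely new ingredient is thus the cofinality step above, whose essential input is the local finiteness of $G$.
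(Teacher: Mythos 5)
Your proposal is correct and follows essentially the same route as the paper: the paper also restricts the spectral sequence of Theorem \ref{ThA} to the cofinal directed subposet of finite subgroups containing a fixed element of order $p$ (via its Lemma \ref{l:lim*(C0)}), quotes \cite[Proposition 6.1(ii)]{JMO} for the vanishing on that subposet, and then collapses the spectral sequence, invoking Theorem \ref{ThB} to pass from $\Lambda^*_f$ to $\Lambda^*$. The only difference is that the paper states the intermediate step for an arbitrary finite subgroup $H_0$ rather than just $\gen{z}$, which is a purely cosmetic generalization.
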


Section \ref{s:background} contains some general results on categories 
and limits. In Section \ref{s:orbits}, we specialize to the case of orbit 
categories of groups, and construct among other things spectral sequences 
of which those in Theorems \ref{ThA} and \ref{ThC} are special cases. The 
functors $\Lambda^*(-;-)$ are then defined in Section \ref{s:Lambda}, where 
we study their basic properties and prove most of our main results, 
including Theorems \ref{ThA}, \ref{ThB}, \ref{ThC}, and \ref{ThD}. Then, in 
Section \ref{s:examples}, we give some examples to show how the functors 
$\Lambda^*(G;M)$ for locally finite groups $G$ can behave quite differently 
when $G$ is infinite than in the finite case. We end with an appendix, 
where conditions are given on a pair of $p$-groups $Q<P$ that imply that 
$Q<N_P(Q)$, and where we also construct an example to show that this is not 
always true, not even when the $p$-groups are locally finite.

\textbf{Notation:} Composition is always from right to left. By a 
\emph{$p$-group} (for a prime $p$), we always mean a group each of whose 
elements has $p$-power order. When $G$ is a group, we write
\begin{itemize} 

\item $\Fin(G)$ for the poset of finite subgroups of $G$; and 

\item $\9gH=gHg^{-1}$ and $H^g=g^{-1}Hg$ for $g\in G$ and $H\le G$.

\end{itemize}
Also, when $M$ is a $\Z G$-module and $H\le G$, we let $\Fix{H}M$ denote the 
fixed set of the action of $H$ on $M$. 

We will frequently regard a poset $(X,\le)$ as a category, where $X$ is the 
set of objects, and where there is a unique morphism $x\to y$ whenever $x\le y$.

We would like to thank Amnon Neeman and Dave Benson for their suggestions 
of references for higher limits over uncountable directed sets: a subject 
which is periferal to this paper but arose in connection with it.

%%\newpage

%%\tableofcontents

%%\newpage

\section{Background on categories and limits}
\label{s:background}

We begin by fixing some terminology. When $\calc$ is a small category, a 
\emph{$\calc$-module} is a functor $\calc\op\too\Ab$. We always work with 
contravariant functors, since those are the ones that appear most naturally 
in our work, but replacing them by covariant functors would mean only minor 
rephrasing of the definitions and results. 

More generally, when $R$ is a commutative ring, an \emph{$R\calc$-module} 
is a functor $\calc\op\too R\mod$. We let $\calc\mod$ and $R\calc\mod$ 
denote the categories of $\calc$-modules and $R\calc$-modules, 
respectively, where morphisms are the natural transformations of functors. 
(In earlier papers such as \cite{Mitchell}, these categories are often 
denoted $\Ab^{\calc\op}$ and $R\mod^{\calc\op}$, respectively, but we find 
the ``$\mod$'' notation natural and typographically simpler.)

If $\calc$ is a small category and $R$ is a commutative ring, then the 
category $R\calc\mod$ has enough 
injectives by Proposition \ref{p:I_c^A}(d) below. So the 
right derived functors of inverse limits of an $R\calc$-module $\Phi$ are 
defined, and we denote them $\lim_\calc^*(\Phi)$. Thus if $(0\to \Phi\to I_0\to 
I_1\to\cdots)$ is a resolution of $\Phi$ by injective $R\calc$-modules, then 
	\[ \higherlim\calc*(\Phi) = H^*\bigl( 0 \too \lim_\calc(I_0) \too 
	\lim_\calc(I_1) \too \lim_\calc(I_2) \too \cdots \bigr). \]
We usually refer to these derived functors of limits as the 
``higher limits'' of $\Phi$.

The following notation for certain injective or acyclic $\calc$-modules 
will often be useful.

\begin{Defi} \label{d:I_c^A}
Let $\calc$ be a small category, and let $R$ be a commutative ring. For 
each $c$ in $\calc$ and each $R$-module $M$, let 
$\I_{\calc,c}^M=\I_c^M$ be the 
$R\calc$-module that sends an object $d$ to 
	\[ \I_c^M(d) = \map(\Mor_\calc(c,d),M) \cong 
	\prod_{\Mor_\calc(c,d)}M. \]
A morphism $\varphi\in\Mor_\calc(d,d')$ induces a map of sets 
	$ \Mor_\calc(c,d)\too\Mor_\calc(c,d') $
via composition, and through that induces a homomorphism 
$\I_c^M(\varphi)\:\I_c^M(d')\too\I_c^M(d)$. 
\end{Defi}

The following result is well known and elementary, but we include a proof 
here since we've been unable to find a precise reference. The key ideas all 
go back at least to Mitchell \cite[p. 27]{Mitchell}, but he states his 
result under somewhat different assumptions.

\begin{Prop} \label{p:I_c^A}
Let $\calc$ be a small category, and let $R$ be a commutative ring.
\begin{enuma} 

\item For each $c$ in $\calc$, each $R$-module $M$, and each 
$R\calc$-module $\Phi$, $\Mor_{R\calc\mod}(\Phi,\I_c^M)\cong\Hom_R(\Phi(c),M)$. 
Hence $\I_c^M$ is an injective $R\calc$-module if $M$ is an injective 
$R$-module.

\item Every injective $R\calc$-module is a direct factor in a product of 
$R\calc$-modules $\I_c^M$, for objects $c$ in $\calc$ and injective $R$-modules 
$M$.

\item For each $c$ in $\calc$ and each $R$-module $M$ (injective or not), 
$\I_c^M$ is acyclic as an $R\calc$-module, in the sense that 
$\lim_\calc^i(\I_c^M)=0$ for all $i>0$.

\item The category $R\calc\mod$ has enough injectives.

\end{enuma}
\end{Prop}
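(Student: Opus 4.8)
The plan is to prove Proposition \ref{p:I_c^A} in the order (a), (c), (b), (d), since (b) is the engine that drives (c) in general and (d) for the full statement, while (a) and (c) for the specific modules $\I_c^M$ are needed as inputs.

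For part (a), the natural isomorphism $\Mor_{R\calc\mod}(\Phi,\I_c^M)\cong\Hom_R(\Phi(c),M)$ is a Yoneda-type adjunction: given a natural transformation $\eta\:\Phi\to\I_c^M$, evaluate at $c$ and compose with the ``evaluation at $\Id_c$'' map $\I_c^M(c)=\map(\Mor_\calc(c,c),M)\too M$ to get a homomorphism $\Phi(c)\to M$. Conversely, given $f\:\Phi(c)\to M$, define $\eta_d\:\Phi(d)\to\map(\Mor_\calc(c,d),M)$ by $\eta_d(x)(\varphi)=f(\Phi(\varphi)(x))$ for $\varphi\in\Mor_\calc(c,d)$; naturality is a direct check using functoriality of $\Phi$, and the two constructions are mutually inverse. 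Since $\I_c^{(-)}$ is thus right adjoint to the exact evaluation functor $\Phi\mapsto\Phi(c)$, it preserves injectives, giving the second sentence of (a). The main subtlety here is just keeping the variance straight, which is routine.

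For part (b), the strategy is the standard one: embed an arbitrary injective $R\calc$-module $J$ into a product of modules of the form $\I_c^M$ with $M$ injective, then observe that since $J$ is injective the embedding splits, making $J$ a direct factor. Concretely, for each object $c$ choose an injective $R$-module $M_c$ containing $J(c)$ (possible since $R\mod$ has enough injectives), which by (a) corresponds to a morphism $J\to\I_c^{M_c}$; assembling these over all $c$ gives $J\to\prod_c\I_c^{M_c}$, and one checks this is a monomorphism of $R\calc$-modules because at each object $d$ the component indexed by $\varphi=\Id_d$ already sees $J(d)\hookrightarrow M_d$ injectively. Then injectivity of $J$ splits it off.

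For part (c), acyclicity of $\I_c^M$: the cleanest route is to identify $\lim_\calc$ on modules of this form explicitly and show $\I_c^M$ admits a functorial/canonical injective resolution with vanishing higher cohomology, or — more in the spirit of the reference to Mitchell — to use that $\lim_\calc(\I_c^M)\cong M$ (evaluation at $\Id_c$ again, since a compatible family in $\prod_d\map(\Mor_\calc(c,d),M)$ is determined by its value at $\Id_c$), and that $\I_c^{(-)}$ sends an injective resolution $0\to M\to E^0\to E^1\to\cdots$ of $M$ in $R\mod$ to a resolution $0\to\I_c^M\to\I_c^{E^0}\to\cdots$ of $\I_c^M$ by injective $R\calc$-modules (using that $\I_c^{(-)}$ is exact, being a product of copies of an exact functor, and preserves injectives by (a)); applying $\lim_\calc\cong(-)$-at-$\Id_c$, which is exact on this class, shows the higher limits vanish. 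Part (d) is then immediate: (b) shows injectives are retracts of products of the $\I_c^M$, and since $R\mod$ has enough injectives, every $R\calc$-module $\Phi$ embeds — using (a) — into $\prod_c\I_c^{E(\Phi(c))}$ where $E(\Phi(c))$ is an injective hull, a product of injectives and hence injective. The main obstacle is part (c): getting $\lim_\calc(\I_c^M)\cong M$ pinned down carefully (the compatibility conditions over all morphisms must genuinely force the family to be constant-at-$\Id_c$) and then checking that $\lim_\calc$ is exact on the subcategory of modules $\I_c^{E}$ with $E$ injective, which is what licenses computing the derived functor via this particular resolution.
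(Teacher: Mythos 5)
Your proposal is correct and follows essentially the same route as the paper: the same adjunction $\Mor_{R\calc\mod}(\Phi,\I_c^M)\cong\Hom_R(\Phi(c),M)$ for (a), the same embed-and-split argument for (b) and (d), and the same injective resolution $0\to\I_c^M\to\I_c^{I_0}\to\I_c^{I_1}\to\cdots$ for (c). The only cosmetic difference is that you identify $\lim_\calc(\I_c^{I_n})\cong I_n$ by analyzing compatible families directly, whereas the paper gets it from $\lim_\calc\cong\Mor_{R\calc\mod}(\6R,-)$ together with part (a); both are valid.
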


\begin{proof} \noindent\textbf{(a) } Define homomorphisms 
	\[ \Mor_{R\calc\mod}(\Phi,\I_c^M) \RLEFT4{\Psi}{\Omega} 
	\Hom_R(\Phi(c),M) \]
as follows. Each $\alpha$ in $\Mor_{R\calc\mod}(\Phi,\I_c^M)$ induces a 
homomorphism $\alpha(c)\in\Hom_R(\Phi(c),\I_c^M(c))$ by evaluation at $c$, 
and we let $\Psi(\alpha)=\ev_{\Id_c}\circ\alpha(c)$ where $\ev_{\Id_c}$ from 
$\I_c^M(c)$ to $M$ is evaluation at $\Id_c$. 
Conversely, given $\beta\in\Hom_R(\Phi(c),M)$, define 
$\Omega(\beta)\in\Hom_{R\calc\mod}(\Phi,\I_c^M)$ by setting 
$\Omega(\beta)(d)(c\xto{\rho}d)=\beta\circ\Phi(\rho)\:\Phi(d)\too M$. That 
$\Psi\Omega(\beta)=\beta$ is immediate from these definitions, and the 
relation $\Omega\Psi(\alpha)=\alpha$ follows from the naturality properties 
of $\alpha$.

If $M$ is injective, and $\Phi_1\too\Phi_2$ is an injective morphism of 
$R\calc$-modules, then each morphism $\Phi_1\too\I_c^M$ extends to $\Phi_2$ 
since each homomorphism $\Phi_1(c)\too M$ extends to $\Phi_2(c)$. So 
$\I_c^M$ is injective as an $R\calc$-module.

\smallskip

\noindent\textbf{(b) } Fix $\Phi\:\calc\op\too R\mod$. For each 
$c\in\Ob(\calc)$, choose an injective $R$-module $I(c)$ and an injective 
homorphism $\chi(c)\:\Phi(c)\too I(c)$. By (a), there is an injective 
homomorphism of $R\calc$-modules from $\Phi$ into 
$\prod_{c\in\Ob(\calc)}\I_c^{I(c)}$. If $\Phi$ is injective, then this injection 
splits, and $\Phi$ is a direct factor in this product. 

\smallskip

\noindent\textbf{(c) } Let $0\too M\too I_0\too I_1\too\cdots$ be a 
resolution of $M$ by injective $R$-modules. Then by (a), 
$0\too\I_c^M\too\I_c^{I_0}\too\I_c^{I_1}\too\cdots$ is a 
resolution of $\I_c^M$ by injective $R\calc$-modules. For each $n\ge0$, 
	\[ \lim_{\calc}(\I_c^{I_n}) \cong 
	\Hom_{R\calc\mod}(\6R,\I_c^{I_n}) \cong \Hom_R(R,I_n)\cong I_n, 
	\]
where $\6R$ is the constant functor that sends all objects in $\calc$ to 
$R$, and where the second isomorphism holds by (a). So 
	\beq \higherlim{\calc}i(\I_c^M) \cong H^i( 0 \to I_0\to 
	I_1\to\cdots ) = \begin{cases} M & \textup{if $i=0$} \\
	0 & \textup{if $i\ge1$.} \end{cases} \eeq

\smallskip

\noindent\textbf{(d) } Let $\Phi$ be an $R\calc$-module. For each $c$ in 
$\calc$, choose an injective $R$-linear homomorphism $\psi_c\:\Phi(c)\too 
M_c$ where $M_c$ is injective in $R\mod$. (The category $R\mod$ has enough 
injectives by \cite[Theorem III.7.4]{MacL-homol}.) Set 
$\I=\prod_{c\in\Ob(\calc)}\I_c^{M_c}$: this is a product of injective 
$R\calc$-modules by (a) and hence is itself injective. By (a) again, there 
is an injective homomorphism $\Phi\too\I$, and thus $R\calc\mod$ has 
enough injectives.
\end{proof}

The ``bar resolution'' for higher limits over small categories will often 
be useful.

\begin{Prop} \label{p:C*(C;Phi)}
Let $\calc$ be a small category, and let $\Phi$ be a $\calc$-module. For 
each $n\ge0$, set
	\[ C^n(\calc;\Phi) = \prod_{c_0\to\cdots\to c_n} \Phi(c_0), \]
where the product is taken over all composable sequences of $n$ morphisms 
in $\calc$ (over all objects in $\calc$ if $n=0$). Define 
$d^n\:C^n(\calc;\Phi)\too C^{n+1}(\calc;\Phi)$ by setting, for $\xi\in 
C^n(\calc;\Phi)$, 
	\[ (d^n\xi)(c_0\xto{\chi}c_1\to\cdots\to c_{n+1}) = 
	\chi^*(\xi(c_1\to\cdots\to c_{n+1})) + \sum_{i=1}^{n+1}(-1)^i 
	\xi(c_0\to\cdots \5{c_i} \cdots\to c_{n+1}). \]
Here, ``$\5{c_i}$'' means that the term $c_i$ is removed from the sequence. Then 
there is a natural isomorphism
	\[ \higherlim{\calc}*(\Phi) \cong H^*\bigl( 0 \too C^0(\calc;\Phi) 
	\xto{~d^0~} C^1(\calc;\Phi) \xto{~d^1~} C^2(\calc;\Phi) \xto{~d^2~} 
	\cdots \bigr). \]
\end{Prop}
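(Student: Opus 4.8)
The plan is to exhibit $C^*(\calc;\Phi)$ as the inverse limit of an explicit resolution of $\Phi$ by $\lim_\calc$-acyclic $\calc$-modules built from the functors $\I_c^M$ of Definition \ref{d:I_c^A}. For $n\ge0$ set
\[ J^n \;=\; \prod_{c_0\to c_1\to\cdots\to c_n}\I_{c_n}^{\Phi(c_0)}, \]
the product over composable sequences of $n$ morphisms in $\calc$ (over all objects of $\calc$ when $n=0$), so that $J^n(d)=\prod_{c_0\to\cdots\to c_n\to d}\Phi(c_0)$; here we use that $\I_c^{(-)}$ is covariant in $c$ (a morphism $c\to c'$ induces $\I_c^M\to\I_{c'}^M$ by precomposition on the sets $\Mor_\calc(-,-)$) and is functorial in the coefficient module. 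Equip $J^\bullet$ with the coboundary $\partial^n\colon J^n\to J^{n+1}$ whose component on the factor indexed by $c_0\xto{\chi}c_1\to\cdots\to c_{n+1}$ is $\sum_{i=0}^{n+1}(-1)^i\partial_i$, where $\partial_0$ is the projection to the factor $\I_{c_{n+1}}^{\Phi(c_1)}$ followed by the map $\I_{c_{n+1}}^{\Phi(c_1)}\to\I_{c_{n+1}}^{\Phi(c_0)}$ induced by $\Phi(\chi)$; where $\partial_i$ for $1\le i\le n$ is the projection to the factor indexed by the sequence obtained by deleting $c_i$; and where $\partial_{n+1}$ is the projection to $\I_{c_n}^{\Phi(c_0)}$ followed by the map $\I_{c_n}^{\Phi(c_0)}\to\I_{c_{n+1}}^{\Phi(c_0)}$ induced by the last arrow $c_n\to c_{n+1}$. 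Finally let $\eta\colon\Phi\to J^0=\prod_{c_0}\I_{c_0}^{\Phi(c_0)}$ be the product of the morphisms corresponding to $\Id_{\Phi(c_0)}$ under the isomorphism of Proposition \ref{p:I_c^A}(a), so that $\eta(d)(x)$ has $(c_0\xto{\rho}d)$-component $\Phi(\rho)(x)$.

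The first step, which I expect to be the main obstacle, is to check that $0\to\Phi\xto{~\eta~}J^0\xto{~\partial^0~}J^1\xto{~\partial^1~}\cdots$ is an exact complex of $\calc$-modules. That $\partial^{n+1}\partial^n=0$ is the routine verification of the cosimplicial identities among the faces $\partial_i$. Exactness is checked after evaluating at an arbitrary object $d$: the augmented complex $0\to\Phi(d)\to J^0(d)\to J^1(d)\to\cdots$ admits a contracting homotopy, namely (up to signs) $\ev_{\Id_d}\colon J^0(d)\to\Phi(d)$ in the bottom degree together with the maps $J^{n+1}(d)\to J^n(d)$ sending $\xi$ to the function $(c_0\to\cdots\to c_n\to d)\mapsto\xi(c_0\to\cdots\to c_n\to d\xto{\Id_d}d)$, which is the ``extra codegeneracy'' available because $\Id_d$ is terminal among the morphisms into $d$. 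The only delicate point here is choosing the signs so that the extra-degeneracy identities hold; this done, the complex is contractible, hence exact.

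Next, each $J^n$ is $\lim_\calc$-acyclic: it is a product of modules $\I_{c_n}^{\Phi(c_0)}$, each acyclic by Proposition \ref{p:I_c^A}(c), and a product of $\calc$-modules of the form $\I_c^M$ is again acyclic --- choose an injective resolution $0\to M\to I^0\to I^1\to\cdots$ in $\Ab$ of each coefficient, apply Proposition \ref{p:I_c^A}(a) to get an injective resolution $\I_c^M\to\I_c^{I^\bullet}$ of each factor, note that the product of these is an injective resolution of the product (products of exact sequences of abelian groups are exact, and a product of injective $\calc$-modules is injective), and apply $\lim_\calc$, which by the computation $\lim_\calc\I_c^I\cong I$ in the proof of Proposition \ref{p:I_c^A}(c) recovers the product of the complexes $I^\bullet$, a resolution of $\prod M$. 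Since acyclic resolutions compute right derived functors, $0\to\Phi\to J^\bullet$ gives $\lim_\calc^*(\Phi)\cong H^*\bigl(\lim_\calc J^\bullet\bigr)$, naturally in $\Phi$.

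It remains to identify $\lim_\calc J^\bullet$ with $C^*(\calc;\Phi)$. Since $\lim_\calc$ commutes with products and $\lim_\calc\I_c^M\cong M$ canonically (evaluation at $\Id_c$, by Proposition \ref{p:I_c^A}(a)), there are natural isomorphisms $\lim_\calc J^n\cong\prod_{c_0\to\cdots\to c_n}\Phi(c_0)=C^n(\calc;\Phi)$, and one checks they carry $\lim_\calc(\partial^n)$ to the differential $d^n$ of the statement: under $\lim_\calc\I_c^M\cong M$, a map $\I_c^M\to\I_c^N$ induced by $M\to N$ becomes that map, while a map $\I_c^M\to\I_{c'}^M$ induced by $c\to c'$ becomes $\Id_M$, so the summand $\partial_0$ of $\partial^n$ becomes $\chi^*=\Phi(\chi)$, the summands $\partial_1,\dots,\partial_n$ become the terms with $c_1,\dots,c_n$ deleted, and $\partial_{n+1}$ becomes the term with $c_{n+1}$ deleted --- exactly the formula defining $d^n$. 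Naturality in $\Phi$ is immediate, since all of the above is functorial in $\Phi$. (One could instead bypass $J^\bullet$: each $C^n(\calc;-)$ is exact, so $\{H^nC^*(\calc;-)\}_{n\ge0}$ is a cohomological $\delta$-functor with $H^0C^*(\calc;\Phi)=\Ker d^0=\lim_\calc\Phi$, and it is effaceable in positive degrees since $\Phi$ embeds in $\prod_c\I_c^{M_c}$ by Proposition \ref{p:I_c^A}(a) while $H^nC^*(\calc;\I_c^M)=0$ for $n>0$ by the same extra-degeneracy argument, this time using that $\Id_c$ is initial among the morphisms out of $c$; hence $H^*C^*(\calc;-)\cong\lim_\calc^*$.)
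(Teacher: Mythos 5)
Your argument is correct, and it is essentially the third proof sketched in the paper: the resolution $0\to\Phi\to\prod_{c_0}\I_{c_0}^{\Phi(c_0)}\to\prod_{c_0\to c_1}\I_{c_1}^{\Phi(c_0)}\to\cdots$ by acyclic modules is exactly the one displayed there, and your verification of exactness (via the extra degeneracy coming from the terminal object $\Id_d$ of $\calc\dn d$), of acyclicity of the products, and of the identification of $\lim_\calc J^\bullet$ with $(C^*(\calc;\Phi),d^*)$ just fills in the details the paper delegates to \cite{GZ} and to Lemma \ref{l:lim*(C)=0}.
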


\begin{proof} This is shown in \cite[Appendix II, Proposition 3.3]{GZ}. A 
different proof, based on taking a projective resolution of the constant 
functor $\underline\Z$, is given in \cite[Lemma 2]{O-Steinb} and 
\cite[Proposition III.5.3]{AKO}. 
It can also be shown by constructing a resolution of $\Phi$ 
	\[ 0 \too \Phi \too \prod_{c_0}\I_{c_0}^{\Phi(c_0)} \too 
	\prod_{c_0\to c_1} \I_{c_1}^{\Phi(c_0)} \too 
	\prod_{c_0\to c_1\to c_2} \I_{c_2}^{\Phi(c_0)} \too \cdots  \]
by acyclic $\calc$-modules (Proposition \ref{p:I_c^A}(c)), and then 
applying Lemma \ref{l:lim*(C)=0} below to show that its homology after taking 
limits is isomorphic to $\higherlim{\calc}*(\Phi)$. 
\end{proof}

The next lemma gives us a tool in many cases for computing the homology of 
the limit of a chain complex of $\calc$-modules. It is particularly 
useful when $\calc$ is the category of a directed poset. 

\begin{Lem} \label{l:lim*(C)=0}
Let $\calc$ be a small category, let $R$ be a commutative ring, and let 
	\[ 0 \Right2{} \Phi^0 \Right3{d^0} \Phi^1 \Right3{d^1} \Phi^2 
	\Right3{d^2} \Phi^3 \Right3{d^3} \cdots \]
be a chain complex of $R\calc$-modules. Assume that each $\Phi^j$ is 
acyclic; i.e., that $\lim_{\calc}^i(\Phi^j)=0$ for all $i\ge1$ and all 
$j\ge0$. Set $\xH^j=H^j(\Phi^*,d^*)=\Ker(d^j)/\Im(d^{j-1})$ as an 
$R\calc$-module. Then there is a spectral sequence 
	\[ E_2^{ij} = \higherlim{\calc}i\bigl( \xH^j \bigr) 
	~\Longrightarrow~ 
	H^{i+j}\bigl(\lim_\calc(\Phi^*),\lim_\calc(d^*)\bigr). \]
\end{Lem}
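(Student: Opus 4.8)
The plan is to build a Cartan--Eilenberg resolution of the chain complex $(\Phi^*,d^*)$ and extract the desired spectral sequence from the associated double complex after applying $\lim_\calc$. First I would choose, for each $j$, an injective resolution of $\Phi^j$ in $R\calc\mod$ (which exists by Proposition \ref{p:I_c^A}(d)), and then, following the standard construction (see \cite[Chapter XVII]{CE} or \cite[\S5.7]{Weibel}), assemble these into a first-quadrant double complex $I^{*,*}$ of injective $R\calc$-modules equipped with horizontal differentials lifting the $d^j$ and vertical differentials giving resolutions. The key properties to record are: each column $I^{j,*}$ is an injective resolution of $\Phi^j$; and, crucially, the rows of cocycles $Z^*_{\mathrm I}(I^{*,q})$, of coboundaries $B^*_{\mathrm I}(I^{*,q})$, and of horizontal cohomology $H^*_{\mathrm I}(I^{*,q})$ are each injective resolutions of the corresponding objects $Z^j$, $B^j$, $\xH^j$ built from $(\Phi^*,d^*)$. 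The existence of such a resolution with these extra splitting properties is exactly what the Cartan--Eilenberg construction provides, and it uses only that $R\calc\mod$ has enough injectives.

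Next I would apply the functor $\lim_\calc$ to the double complex $I^{*,*}$, obtaining a double complex of abelian groups $\lim_\calc(I^{*,*})$, and run its two spectral sequences. Taking vertical cohomology first: since each column is an injective resolution of $\Phi^j$, the $q$-th vertical cohomology of $\lim_\calc(I^{j,*})$ is $\lim_\calc^q(\Phi^j)$, which vanishes for $q\geq1$ by the acyclicity hypothesis and equals $\lim_\calc(\Phi^j)$ for $q=0$. Hence that spectral sequence collapses onto the row $q=0$, and the total cohomology of $\lim_\calc(I^{*,*})$ is identified with $H^*\bigl(\lim_\calc(\Phi^*),\lim_\calc(d^*)\bigr)$ — this is the abutment. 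Now run the other spectral sequence, taking horizontal cohomology first. Because $H^i_{\mathrm I}(I^{*,q})$ is an injective $R\calc$-module, and because the short exact sequences $0\to B^i\to Z^i\to \xH^i\to0$ and $0\to Z^i\to I^{i,q}\to B^{i+1}\to0$ are split in each degree $q$ (Cartan--Eilenberg), applying $\lim_\calc$ and then taking horizontal cohomology of $\lim_\calc(I^{*,*})$ yields $\lim_\calc$ of the injective resolution $H^i_{\mathrm I}(I^{*,*})$ of $\xH^i$. Taking vertical cohomology of that then gives $\lim_\calc^j(\xH^i)$, so $E_2^{ij}=\lim_\calc^i(\xH^j)$ (after matching indices), converging to the same abutment. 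Since this is a first-quadrant double complex, convergence is automatic and there are no convergence subtleties.

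The one technical point that needs care, and which I expect to be the main obstacle, is verifying that the Cartan--Eilenberg resolution exists in the generality of $R\calc\mod$ and that the computation of horizontal cohomology commutes appropriately with $\lim_\calc$. The abelian category $R\calc\mod$ has enough injectives but need not have exact products or any other special exactness, so one must be slightly careful: the splitting of the horizontal short exact sequences in each fixed vertical degree (which is what makes $\lim_\calc$ behave well row by row) is a purely degreewise statement and survives applying any additive functor, so this is fine. An alternative, perhaps cleaner, route avoiding Cartan--Eilenberg resolutions is to filter a genuine injective resolution of the total complex; but I expect the Cartan--Eilenberg approach to be the most transparent here. As a sanity check, when the complex is concentrated in degree $j=0$ the statement reduces to the definition of $\lim_\calc^*(\Phi^0)$, and when all $\Phi^j$ are injective (not merely acyclic) the spectral sequence degenerates to the statement that $\lim_\calc$ of an injective resolution computes higher limits of the cohomology — both consistent with the claim.
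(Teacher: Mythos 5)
Your proposal is correct and follows essentially the same route as the paper: the paper builds the Cartan--Eilenberg-type double complex by hand (via the horseshoe lemma applied to the short exact sequences $0\to B^j\to Z^j\to \xH^j\to 0$ and $0\to Z^j\to\Phi^j\to B^{j+1}\to 0$), then runs the two spectral sequences of $\lim_\calc$ of the double complex exactly as you describe, using degreewise splitness of the short exact sequences of injectives to identify the second $E_2$-page. The only cosmetic difference is that you cite the standard Cartan--Eilenberg construction where the paper carries it out explicitly.
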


\begin{proof} Set $Z^j=\Ker(d^j)$ and $B^{j+1}=\Im(d^j)$ for each 
$j\ge0$ (and set $B^0=0$), all regarded as $R\calc$-modules. 
Thus there are short exact sequences of $R\calc$-modules
	\beqq \begin{split} 
	&0 \Right2{} B^j \Right3{} Z^j \Right3{} \xH^j \Right2{} 0 \\
	&0 \Right2{} Z^j \Right3{} \Phi^j \Right3{} B^{j+1} \Right2{} 0 
	\end{split} \label{e:lim*(C)=0} \eeqq
for all $j\ge0$. For each $j\ge0$, choose injective resolutions 
	\begin{align*} 
	&0 \Right2{} B^j \Right3{} I^{0j}_{(B)} \Right3{} I^{1j}_{(B)}
	\Right3{} I^{2j}_{(B)} \Right3{} \cdots \\
	&0 \Right2{} \xH^j \Right3{} I^{0j}_{(H)} \Right3{} I^{1j}_{(H)}
	\Right3{} I^{2j}_{(H)} \Right3{} \cdots .
	\end{align*}
By the horseshoe lemma (see \cite[Proposition 6.5]{Osborne}, or 
\cite[Proposition I.3.5]{CE} for the dual version), there are injective 
resolutions 
	\begin{align*} 
	&0 \Right2{} Z^j \Right3{} I^{0j}_{(Z)} \Right3{} I^{1j}_{(Z)}
	\Right3{} I^{2j}_{(Z)} \Right3{} \cdots \\
	&0 \Right2{} \Phi^j \Right3{} I^{0j}_{(\Phi)} \Right3{} I^{1j}_{(\Phi)}
	\Right3{} I^{2j}_{(\Phi)} \Right3{} \cdots 
	\end{align*}
which fit into short exact sequences of resolutions 
	\beqq \begin{split} 
	&0 \Right2{} I^{*j}_{(B)} \Right3{\alpha^{*j}} I^{*j}_{(Z)} 
	\Right3{\beta^{*j}} 
	I^{*j}_{(H)} \Right2{} 0 \\
	&0 \Right2{} I^{*j}_{(Z)} \Right3{\gamma^{*j}} I^{*j}_{(\Phi)} 
	\Right3{\delta^{*j}} 
	I^{*,j+1}_{(B)} \Right2{} 0 
	\end{split} \label{e:lim*(C)=0-2} \eeqq
of the sequences in \eqref{e:lim*(C)=0}.

Now consider the sequence of injective resolutions 
	\[ 0 \Right2{} I^{*0}_{(\Phi)} \Right3{} I^{*1}_{(\Phi)} 
	\Right3{} I^{*2}_{(\Phi)} \Right3{} \cdots \]
where each morphism of resolutions is the composite 
	\[ I^{*j}_{(\Phi)} \Right3{\delta^{*j}} I^{*,j+1}_{(B)} 
	\Right4{\alpha^{*,j+1}} 
	I^{*,j+1}_{(Z)} \Right4{\gamma^{*,j+1}} I^{*,j+1}_{(\Phi)}. \]
We regard this as a double complex $\{I^{ij}_{(\Phi)}\}_{i,j\ge0}$ of 
injective objects in $\calc\mod$, where the $j$-th row is the 
injective resolution $I^{*j}_{(\Phi)}$ of $\Phi^j$. Set 
$X^{ij}=\lim_\calc(I^{ij}_{(\Phi)})$. 

Consider the two spectral sequences induced by the double complex 
$X^{ij}$: let $\5E$ be that obtained by taking homology first of the 
rows and then of the columns, and let $E$ be the other one. 
Since the $j$-th row $I^{*j}_{(\Phi)}$ is an injective resolution of 
$\Phi^j$, we have $\5E_1^{ij}\cong\lim_\calc^i(\Phi^j)$. Thus for all 
$j\ge0$, we have $\5E_1^{0j}\cong\lim_\calc(\Phi^j)$, while 
$\5E_1^{ij}=0$ for $i>0$ by assumption. So $\5E$ collapses, and $E$ and 
$\5E$ both converge to $H^*(\lim_{\calc}(\Phi^*),\lim_{\calc}(d^*))$. 

The short exact sequences of injectives \eqref{e:lim*(C)=0-2} are still 
exact after taking limits over $\calc$. So the homology of the $i$-th 
column $X^{i*}$ is $E_1^{i*}\cong\lim_{\calc}(I^{i*}_{(H)})$. Thus the 
$j$-th row in the $E_1$-term is $E_1^{*j}\cong\lim_\calc(I^{*j}_{(H)})$, where 
$\{I^{*j}_{(H)}\}_{j\ge0}$ is an injective resolution of $\xH^j$. It 
follows that $E_2^{ij}\cong \lim_{\calc}^i(\xH^j)$. 
\end{proof}

The following definition of a directed category is that used by Bass 
\cite[p. 44]{Bass}. 

%% and Quillen \cite[\S\,1, p. 84]{Quillen}. 

\begin{Defi} \label{d:directed}
A category $\calc$ is \emph{directed} if it satisfies the following 
conditions:
\begin{enuma} 
\item For each pair of objects $c_1,c_2\in\Ob(\calc)$, there is a third 
object $d$ in $\calc$, together with morphisms $c_1\too d\fromm c_2$.
\item For each pair of objects $c_1,c_2\in\Ob(\calc)$ and each pair of 
morphisms $\varphi,\psi\in\Mor_\calc(c_1,c_2)$, there is an object $d$ in 
$\calc$ and $\chi\in\Mor_\calc(c_2,d)$ such that $\chi\varphi=\chi\psi$.
\end{enuma}
\end{Defi}

Note that a poset is directed if and only if its category is directed. 
(Condition (b) always holds for the category of a poset, since there is at 
most one morphism between any pair of objects.)

We recall here the definition of over- and undercategories, as well as Kan 
extensions. Let $F\:\calc\too\cald$ be a functor between small categories. 
For each object $d$ in $\cald$, let $d\dn F$ (the undercategory) and $F\dn 
d$ (the overcategory) be the categories with objects
	\begin{align*} 
	\Ob(d\dn F) &= \bigl\{ (c,\varphi) \,\big|\, c\in\Ob(\calc),~ 
	\varphi\in\Mor_\cald(d,F(c)) \bigr\} \\
	\Ob(F\dn d) &= \bigl\{ (c,\varphi) \,\big|\, c\in\Ob(\calc),~ 
	\varphi\in\Mor_\cald(F(c),d) \bigr\}  . 
	\end{align*}
A morphism in $d\dn F$ from $(c_1,\varphi_1)$ to $(c_2,\varphi_2)$ is a 
morphism $\rho\in\Mor_\calc(c_1,c_2)$ such that 
$\varphi_2=F(\rho)\circ\varphi_1$, and similarly for $F\dn d$.

If $F\:\calc\too\cald$ is a functor between small categories and 
$\Phi\:\calc\op\too R\mod$ is an $R\calc$-module, then the left and right 
Kan extensions along $F$ are functors $F_*^L,F_*^R\:R\calc\mod\too 
R\cald\mod$ that are left and right adjoint, respectively, to the functor 
sending $\Psi$ to $\Psi\circ F$. They are described explicitly by the 
formulas 
	\begin{align} 
	(F_*^L\Psi)(d) &= \colim_{d{\downarrow}F}\bigl( 
	(d{\downarrow}F)\op \Right3{\mu\op} \calc\op \Right3{\Psi} R\mod 
	\bigr) \label{e:F^L_*} \\
	(F^R_*\Psi)(d) &= \lim_{F{\downarrow}d}\bigl( 
	(F{\downarrow}d)\op \Right3{\mu\op} \calc\op \Right3{\Psi} R\mod 
	\bigr), \label{e:F^R_*}
	\end{align}
where in both cases, $\mu$ is the forgetful functor sending an object 
$(c,\varphi)$ to $c$. We refer to \cite[\S\, X.3]{MacL-categ} for the description 
of $F_*^R\Phi$ (but note that we assume that $\Phi$ is contravariant). The 
formula for $F_*^L\Phi$ can then be obtained by replacing each category by 
its opposite.

We next note some conditions under which the restriction of a functor to a 
subcategory has the same higher limits. 

\begin{Lem} \label{l:lim*(C0)}
Let $F\:\calc\too\cald$ be a functor between small categories. Assume, for 
each $d\in\Ob(\cald)$, that $(d{\downarrow}F)\op$ is nonempty and directed. 
Then for each commutative ring $R$ and each functor $\Phi\:\cald\op\to 
R\mod$, we have $\higherlim{\cald}q(\Phi)\cong \higherlim{\calc}q(\Phi\circ 
F)$ for all $q\ge0$. 
\end{Lem}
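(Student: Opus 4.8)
The plan is to reduce the statement to an application of the Kan extension machinery together with Lemma~\ref{l:lim*(C)=0}, using the fact that higher limits over a nonempty directed category vanish in positive degrees. Concretely, the composite $\Phi\circ F$ is the restriction functor applied to $\Phi$; since restriction along $F$ has a right adjoint, namely the right Kan extension $F^R_*$, one expects $\higherlim{\calc}q(\Phi\circ F)$ to be computable via $F^R_*$ applied to an injective resolution of $\Phi\circ F$, but it is cleaner to go the other way: take an injective resolution $0\to\Phi\to I^0\to I^1\to\cdots$ of $\Phi$ in $R\cald\mod$, and study what happens after restriction along $F$.

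First I would show that for each object $c\in\Ob(\calc)$ and each $R$-module $M$, the restriction $\I_{\cald,F(c)}^M\circ F$ is isomorphic to $\I_{\calc,c}^M$ as an $R\calc$-module; this follows directly from the definition, since $\Mor_\cald(F(c),d')$ for $d'=F(c')$ is not the right thing — rather one checks that $\I_{\cald,d}^M\circ F$ sends $c'$ to $\map(\Mor_\cald(d,F(c')),M)$, and more generally using Proposition~\ref{p:I_c^A}(b) every injective $R\cald$-module $I$ restricts (as a direct factor of a product of such $\I_{\cald,d}^M$) to an $R\calc$-module whose value at $c'$ is built from $\map(\Mor_\cald(d,F(c')),M)$. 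The key computational input is then: for an injective $R\cald$-module $I$, the restriction $I\circ F$ is acyclic as an $R\calc$-module, i.e. $\higherlim{\calc}i(I\circ F)=0$ for $i>0$, and moreover $\lim_\calc(I\circ F)\cong\lim_\cald(I)$. The acyclicity should follow by reducing (via Proposition~\ref{p:I_c^A}(b,c)) to the case $I=\I_{\cald,d}^M$ and then observing that $\I_{\cald,d}^M\circ F$ decomposes over $\Ob(\calc)$ in a way governed by $(d\dn F)$; the hypothesis that $(d\dn F)\op$ is nonempty and directed is exactly what forces the relevant higher limits to vanish and the $\lim$ to be computed correctly, since higher limits over a nonempty directed category vanish in positive degrees and the degree-zero limit over a directed category of a ``constant-like'' diagram is the expected value.

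Granting that, the argument concludes as follows. Let $0\to\Phi\to I^0\to I^1\to\cdots$ be an injective resolution of $\Phi$ over $\cald$. Restricting along $F$ gives an exact sequence $0\to\Phi\circ F\to I^0\circ F\to I^1\circ F\to\cdots$ (exactness is preserved since restriction is exact, being computed objectwise), and each $I^j\circ F$ is acyclic over $\calc$ by the previous paragraph. Hence by Proposition~\ref{p:I_c^A}(c) and the standard fact that higher limits can be computed from an acyclic resolution (Lemma~\ref{l:lim*(C)=0} applied to the acyclic complex $I^*\circ F$, whose cohomology is $\Phi\circ F$ concentrated in degree $0$, so the spectral sequence collapses), we get
	\[ \higherlim{\calc}q(\Phi\circ F) \cong H^q\bigl(\lim_\calc(I^0\circ F)\to\lim_\calc(I^1\circ F)\to\cdots\bigr) \cong H^q\bigl(\lim_\cald(I^0)\to\lim_\cald(I^1)\to\cdots\bigr) = \higherlim{\cald}q(\Phi), \]
where the middle isomorphism uses $\lim_\calc(I^j\circ F)\cong\lim_\cald(I^j)$ naturally in $j$.

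The main obstacle I expect is the acyclicity claim $\higherlim{\calc}i(I\circ F)=0$ for $I$ injective over $\cald$: one must carefully identify $\I_{\cald,d}^M\circ F$ and show that, as an $R\calc$-module, its higher limits over $\calc$ vanish, and this is where the directedness of $(d\dn F)\op$ must be used essentially. A clean route is to observe that $\I_{\cald,d}^M\circ F$ is, up to the identification of its sections, the right Kan extension along $(d\dn F)\to\calc$ (the forgetful functor $\mu$) of the constant functor $M$ on $(d\dn F)$ — or rather $\prod$ over components — so that computing $\higherlim{\calc}*(\I_{\cald,d}^M\circ F)$ reduces via a Kan-extension/Grothendieck-type argument to $\higherlim{(d\dn F)}*$ of a constant functor, which vanishes in positive degrees because $(d\dn F)\op$ is nonempty and directed. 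One should double-check the contravariance bookkeeping (the functors are $\cald\op\to R\mod$, so ``directed'' is imposed on $(d\dn F)\op$ rather than $(d\dn F)$), and confirm that no finiteness or smallness issue obstructs the interchange of $\lim$ and the product in the definition of $\I_c^M$.
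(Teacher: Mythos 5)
Your proof is correct in outline, but it takes a genuinely different route from the paper's at the key step. The paper shows that restriction along $F$ sends injective $R\cald$-modules to \emph{injective} $R\calc$-modules: since restriction is right adjoint to the left Kan extension $F^L_*$, it suffices that $F^L_*$ be exact, and by \eqref{e:F^L_*} this reduces to the exactness of colimits over the nonempty filtering categories $(d\dn F)\op$ (Weibel, Theorem 2.6.15). You instead show only that $I\circ F$ is \emph{acyclic} for $I$ injective, and then invoke the acyclic-resolution spectral sequence of Lemma \ref{l:lim*(C)=0} (which does collapse as you say, since the cohomology of $I^*\circ F$ is $\Phi\circ F$ concentrated in degree $0$). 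Your reduction of the acyclicity of $\I_{\cald,d}^M\circ F$ to a constant functor is sound and is most cleanly made precise via the bar complex: an $n$-chain in $d\dn F$ is exactly a pair consisting of an $n$-chain $c_0\to\cdots\to c_n$ in $\calc$ and an element of $\Mor_\cald(d,F(c_0))$, so $C^*(\calc;\I_{\cald,d}^M\circ F)\cong C^*(d\dn F;\6M)$ as cochain complexes and hence $\higherlim{\calc}*(\I_{\cald,d}^M\circ F)\cong\higherlim{d\dn F}*(\6M)$; this also gives $\lim_\calc(\I_{\cald,d}^M\circ F)\cong M\cong\lim_\cald(\I_{\cald,d}^M)$ naturally, which passes to direct factors of products and so handles arbitrary injectives. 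What the paper's route buys is that it only needs the elementary exactness of filtered \emph{colimits}; what yours buys is that it avoids Kan extensions entirely in favor of an explicit chain-level identification.

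Two caveats. First, your blanket assertion that ``higher limits over a nonempty directed category vanish in positive degrees'' is false for general functors (e.g.\ $\lim^1$ of a tower of non-surjective maps over the directed poset $\N$); it is true for \emph{constant} functors, which is all you use, but the statement must be restricted. Second, and more importantly for the logic of the paper: the acyclicity of constant functors on nonempty directed categories is proved in the paper (at the end of the proof of Proposition \ref{p:spseq0}) \emph{by citing Lemma \ref{l:lim*(C0)}}, so you cannot appeal to that argument without circularity. You need an independent proof, e.g.\ that $C^*(\calc;\6M)$ is the simplicial cochain complex of the nerve $|\calc|$ with coefficients in $M$ and that the nerve of a nonempty filtered category is contractible. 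This is standard, so the gap is fillable, but it must be filled.
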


\begin{proof} For each $d\in\Ob(\cald)$, the undercategory $d{\downarrow}F$ 
is nonempty by assumption, and is connected since it is directed (Definition 
\ref{d:directed}(a)). So by \cite[\S\,IX.3, Theorem 1]{MacL-categ}, 
$\lim_\cald(\Phi)\cong\lim_{\calc}(\Phi\circ F)$ for each $R\cald$-module 
$\Phi$. 

Fix an $R\cald$-module $\Phi$, and let $0\too\Phi\too I_0\too I_1\too\cdots$ be 
a resolution of $\Phi$ by injective $R\cald$-modules. The sequence 
	\[ 0 \Right3{} \Phi\circ F \Right3{} I_0\circ F 
	\Right3{} I_1\circ F \Right3{} I_2\circ F \Right3{} 
	\cdots \]
is an exact sequence of $R\calc$-modules, and we just saw that 
$\lim_{\calc}(I_n\circ F)\cong \lim_{\cald}(I_n)$ for each $n$. So if 
$I_n\circ F$ is injective for each $n$, then 
$\lim_{\calc}^*(\Phi\circ F)\cong\lim_{\cald}^*(\Phi)$, which 
is what we want to show.

It thus remains to prove that composition with $F$ sends injective 
$R\cald$-modules to injective $R\calc$-modules. To see this, it suffices to 
show that left Kan extension along $F$ sends injective maps of 
$R\calc$-modules to injective maps of $R\cald$-modules (since it is left 
adjoint to composition with $F$). By \eqref{e:F^L_*}, it suffices to show 
that colimits of functors on $(d{\downarrow}F)\op$ are left exact, and this 
follows from \cite[Theorem 2.6.15]{Weibel}, applied with 
$(d{\downarrow}F)\op$ in the role of $I$. (What Weibel calls a filtering 
category is what we call here a nonempty directed category.)
\end{proof}

The next proposition is an application of Lemma \ref{l:lim*(C)=0}, and is 
useful in certain cases when comparing higher limits over a category to 
those over a family of subcategories.

\begin{Prop} \label{p:spseq0}
Fix a commutative ring $R$. Let $\calc$ be a nonempty small category, and 
let $\D$ be a set of subcategories of $\calc$, regarded as a poset via 
inclusion. Assume that
\begin{enumi} 

\item $\calc$ is the union of the subcategories in $\D$; and 

\item $\D$ is a directed poset.

\iffalse
\item for each $\cald_1,\cald_2\in\D$, there is 
$\cald_3\in\D$ such that $\cald_3\supseteq\cald_1,\cald_2$. 
\fi

\end{enumi}
Then for each $R\calc$-module $\Phi$, there is a first quarter spectral 
sequence of $R$-modules
	\[ E_2^{ij} \cong \higherlim{\D}i 
	\bigl( \cald \mapsto \higherlim{\cald}j(\Phi|_\cald) \bigr) 
	\implies \higherlim{\calc}{i+j}(\Phi). \]
\end{Prop}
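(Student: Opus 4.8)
The plan is to realize the claimed spectral sequence as an instance of Lemma \ref{l:lim*(C)=0}, by constructing from $\Phi$ an explicit acyclic resolution whose columns assemble the higher limits over the subcategories $\cald\in\D$, and whose homology (after taking limits over $\D$) computes $\higherlim{\calc}{*}(\Phi)$. Concretely, I would use the bar-type construction from the proof of Proposition \ref{p:C*(C;Phi)}. For a small category $\cale$, the resolution
	\[ 0 \too \Phi|_\cale \too \prod_{c_0}\I_{\cale,c_0}^{\Phi(c_0)} \too
	\prod_{c_0\to c_1}\I_{\cale,c_1}^{\Phi(c_0)} \too \cdots \]
(product over composable strings of morphisms \emph{in $\cale$}) is a resolution of $\Phi|_\cale$ by $\cale$-acyclic modules, and taking $\lim_\cale$ yields the cochain complex $C^*(\cale;\Phi|_\cale)$ computing $\higherlim{\cale}{*}(\Phi|_\cale)$. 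The key point is that this construction is functorial in $\cale$ under inclusions of subcategories: if $\cald\subseteq\cald'$, restriction along the inclusion gives a natural surjection $C^*(\cald';\Phi|_{\cald'})\too C^*(\cald;\Phi|_\cald)$, simply because every composable string in $\cald$ is one in $\cald'$. So $\cald\mapsto C^n(\cald;\Phi|_\cald)$ is a functor $\D\op\too R\mod$ for each $n\ge0$, and the differentials $d^n$ are natural in $\cald$, giving a cochain complex of $R\D$-modules.

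The second main step is to identify the homology of this cochain complex of $R\D$-modules. Since $\cald\mapsto C^n(\cald;\Phi|_\cald)$ is already the limit $\lim_\cald$ of a $\cald$-acyclic module, one checks that as an $R\D$-module it is $\D$-acyclic: indeed, unwinding the definitions, $C^n(\cald;\Phi|_\cald)=\prod_{\sigma}\Phi(c_0(\sigma))$ where $\sigma$ ranges over composable $n$-strings in $\cald$, and this is a product of modules of the form $\I_{\D,\cale_\sigma}^{\Phi(c_0)}$ for suitable objects $\cale_\sigma\in\D$ (the smallest subcategory in $\D$ containing the string $\sigma$ — here condition (ii), directedness of $\D$, is what lets us organize the product), so Proposition \ref{p:I_c^A}(c) applies. [I expect this to be the main obstacle: making the bookkeeping precise, i.e.\ checking that $C^n(-;\Phi)$ really is a product of the acyclic modules $\I_{\D,\cald}^{M}$ and not merely acyclic by some ad hoc argument — one has to be careful that a given string may live in many subcategories of $\D$, and that the ``set of $\cald\in\D$ containing $\sigma$'' is an undercategory-like filter in $\D$ on which the relevant limit is taken.] Alternatively, and perhaps more cleanly, one avoids this by observing that $\cald\mapsto C^n(\cald;\Phi|_\cald)=\lim_\cald(J^n|_\cald)$ for a fixed $\calc$-acyclic module $J^n=\prod_{c_0\to\cdots\to c_n}\I_{\calc,c_n}^{\Phi(c_0)}$, and that for a directed poset $\D$ of subcategories covering $\calc$, the functor $\cald\mapsto\lim_\cald(J|_\cald)$ is $\D$-acyclic for any $\calc$-acyclic $J$ — this last fact can itself be proved by reducing to $J=\I_{\calc,c}^M$ and computing directly.

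Granting acyclicity, Lemma \ref{l:lim*(C)=0} applies to the cochain complex $\{C^n(-;\Phi)\}_{n\ge0}$ of $R\D$-modules and produces a first quarter spectral sequence
	\[ E_2^{ij}\cong\higherlim{\D}{i}\bigl(\xH^j\bigr)\implies
	H^{i+j}\bigl(\lim_\D C^*(-;\Phi),\ \lim_\D d^*\bigr), \]
where $\xH^j$ is the $j$-th cohomology $R\D$-module of $C^*(-;\Phi)$. Two identifications finish the proof. For the $E_2$-term: the value of $\xH^j$ at $\cald\in\D$ is $H^j(C^*(\cald;\Phi|_\cald),d^*)\cong\higherlim{\cald}{j}(\Phi|_\cald)$ by Proposition \ref{p:C*(C;Phi)} — and since all the structure maps are restrictions, $\xH^j$ is exactly the functor $\cald\mapsto\higherlim{\cald}{j}(\Phi|_\cald)$, so $E_2^{ij}\cong\higherlim{\D}{i}(\cald\mapsto\higherlim{\cald}{j}(\Phi|_\cald))$. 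For the abutment: $\lim_\D C^n(-;\Phi)$. Here condition (i), that $\D$ covers $\calc$, enters: a compatible family of elements over all $\cald\in\D$ of $\prod_{\text{$n$-strings in }\cald}\Phi(c_0)$ is precisely an element of $\prod_{\text{$n$-strings in }\calc}\Phi(c_0)=C^n(\calc;\Phi)$, because every $n$-string in $\calc$ lies in some $\cald\in\D$ and compatibility pins down its coordinate unambiguously (using (ii) to compare different choices of $\cald$). Thus $\lim_\D C^*(-;\Phi)\cong C^*(\calc;\Phi)$ as cochain complexes, so $H^{i+j}(\lim_\D C^*(-;\Phi))\cong\higherlim{\calc}{i+j}(\Phi)$ again by Proposition \ref{p:C*(C;Phi)}, giving the stated abutment and completing the argument.
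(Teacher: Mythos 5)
Your overall architecture is exactly the paper's: apply Lemma \ref{l:lim*(C)=0} to the cochain complex of $R\D$-modules $\cald\mapsto C^n(\cald;\Phi|_\cald)$, identify its limit over $\D$ with $C^*(\calc;\Phi)$ using (i) and (ii), and read off the $E_2$-term from Proposition \ref{p:C*(C;Phi)}. The identifications of the abutment and of $\xH^j$ are fine. The one step you flag as "the main obstacle" --- acyclicity of $\cald\mapsto C^n(\cald;\Phi|_\cald)$ as an $R\D$-module --- is indeed the crux, and neither of the two routes you sketch for it works. First, there is in general no "smallest subcategory in $\D$ containing the string $\sigma$": $\D$ is merely a directed set of subcategories, not closed under intersection, so the set $X(\sigma)$ of members of $\D$ containing $\sigma$ need not have a least element, and hence the $\sigma$-factor of $C^n(-;\Phi)$ is \emph{not} of the form $\I_{\D,\cale_\sigma}^{\Phi(c_0)}$ (recall $\I_{\D,\cale}^A(\cald)=\map(\Mor_\D(\cale,\cald),A)$ is $A$ precisely on the members of $\D$ containing $\cale$, so this would force $X(\sigma)$ to be the up-set of a single $\cale_\sigma\in\D$). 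Second, the proposed identity $C^n(\cald;\Phi|_\cald)=\lim_\cald(J^n|_\cald)$ with $J^n=\prod_{c_0\to\cdots\to c_n}\I_{\calc,c_n}^{\Phi(c_0)}$ is false: the product defining $J^n$ is indexed by composable strings in $\calc$, and that indexing set does not shrink when you restrict to $\cald$, whereas $C^n(\cald;\Phi|_\cald)$ is indexed only by strings in $\cald$; moreover each factor $\lim_\cald(\I_{\calc,c}^M|_\cald)$ involves $\Mor_\calc(c,-)$ rather than $\Mor_\cald(c,-)$ and need not reduce to $M$.

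The correct argument (the paper's) is close in spirit to your first attempt but does not try to realize the factors as injective-type modules. For each $\sigma\in\ch_n(\calc)$ let $\Omega_{X(\sigma)}^A$ be the $\D$-module that is $A$ with identity structure maps on the sub-poset $X(\sigma)$ and $0$ elsewhere; then
\[ \bigl(\cald\mapsto C^n(\cald;\Phi|_\cald)\bigr)\cong
\prod_{\sigma\in\ch_n(\calc)}\Omega_{X(\sigma)}^{\Phi(\init(\sigma))}. \]
Comparing bar complexes gives $\higherlim{\D}*(\Omega_{X(\sigma)}^A)\cong\higherlim{X(\sigma)}*(\6A)$, the higher limits of the \emph{constant} functor on the poset $X(\sigma)$. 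Now (i) makes $X(\sigma)$ nonempty and (ii) makes it directed (any upper bound in $\D$ of two members of $X(\sigma)$ again contains $\sigma$), so Lemma \ref{l:lim*(C0)} applied to the unique functor $X(\sigma)\too *$ shows that constant functors on $X(\sigma)$ are acyclic. This is the missing input; with it, your argument closes.
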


\begin{proof} For each $n\ge0$ and each $\cald\subseteq\calc$, let 
$\ch_n(\cald)$ be the set of all sequences $c_0\to\cdots\to c_n$ of objects 
and morphisms in $\cald$. For each $\sigma=(c_0\to\cdots\to 
c_n)\in\ch_n(\calc)$, set $\init(\sigma)=c_0$. By (i) and (ii), 
$\ch_n(\calc)$ is the union of the $\ch_n(\cald)$ for all $\cald\in\D$. So 
in the notation of Proposition \ref{p:C*(C;Phi)}, for each $n\ge0$, 
	\[ C^n(\calc;\Phi) = \prod_{\sigma\in\ch_n(\calc)} \Phi(\init(\sigma)) 
	\cong \lim_{\cald\in\D} \Bigl( \prod_{\sigma\in\ch_n(\cald)} 
	\Phi(\init(\sigma)) \Bigr) 
	\cong \lim_{\cald\in\D}\bigl(C^n(\cald;\Phi|_\cald)\bigr) \]
Hence the proposition follows from Lemma \ref{l:lim*(C)=0} once we show 
that each of the functors $(\cald\mapsto C^n(\cald;\Phi|_\cald))$ is acyclic 
as an $R\D$-module.

For each $\sigma\in\ch_n(\calc)$, let $X(\sigma)$ be the set of all 
members of $\D$ that contain $\sigma$ (which we just saw is nonempty). 
For each $\sigma\in\ch_n(\calc)$ and each abelian group $A$, let 
$\Omega_{X(\sigma)}^A$ 
be the $\D$-module that sends each member of $X(\sigma)$ to $A$ and each 
inclusion between them to $\Id_A$, and sends all members of 
$\D\sminus X(\sigma)$ to $0$. Then as $\D$-modules,
	\beqq \Bigl( \cald \mapsto C^n(\cald;\Phi|_\cald) \Bigr) \cong 
	\prod_{\sigma\in\ch_n(\calc)}\Omega_{X(\sigma)}^{\Phi(\init(\sigma))}. 
	\label{e:spseq0a} \eeqq
Also, for each $\sigma$ and $A$, we have $\lim_{\D}^*(\Omega_{X(\sigma)}^A)
\cong \lim_{X(\sigma)}^*(\6A)$, where $\6A$ denotes the constant functor, by Proposition 
\ref{p:C*(C;Phi)} and since $C^*(\D;\Omega_{X(\sigma)}^A) \cong 
C^*(X(\sigma);\6A)$. 
So it remains to show, for each $\sigma\in\ch_n(\calc)$, that every 
constant functor $X(\sigma)\op\too R\mod$ is acyclic. The bar 
resolution for a constant functor is the same up to isomorphism whether we 
regard it as a functor on $X(\sigma)\op$ or on $X(\sigma)$, so we will be 
done upon showing that every constant functor $X(\sigma)\too R\mod$ is 
acyclic. 

Let $*$ be the category with one object $o$ and the identity morphism, and 
let $F\:X(\sigma)\too*$ be the (unique) functor. Fix an $R$-module $M$, 
and let $\Phi_M\:*\too R\mod$ be the functor that sends $o$ to $M$. Then 
$o{\downarrow}F\cong X(\sigma)$, we already showed that it is nonempty, and 
it is directed by (ii). So by Lemma \ref{l:lim*(C0)}, applied with 
$F\op\:X(\sigma)\op\too*$ in the role of $F\:\calc\too\cald$, 
the constant functor $\6M=\Phi_M\circ F$ is acyclic.
\end{proof}

%%\newpage

\section{Limits over orbit categories}
\label{s:orbits}

In this section, we first give the definition and some of the basic 
properties of orbit categories of groups, and then construct two 
spectral sequences that involve higher limits over orbit categories. 
We are mostly interested in orbit categories of locally finite groups, 
but many of these results hold just as easily for arbitrary groups.

\begin{Defi} \label{d:orbit.cat}
Let $G$ be a group. 
\begin{enuma} 

\item Let $\calo(G)$ be the \emph{orbit category} of $G$: 
the category whose objects are the subgroups of $G$, and where for each 
$H,K\le G$,
	\[ \Mor_{\calo(G)}(H,K) = K{\backslash}T_G(H,K) 
	= \{K g \,|\, g\in G,~ \9gH\le K \}. \] 
We identify $\Mor_{\calo(G)}(H,K)\cong\map_G(G/H,G/K)$, where a morphism 
$Kg$ sends $xH\in G/H$ to $xg^{-1}K\in G/K$.

\item When $X$ is a nonempty set of subgroups of $G$ invariant under 
conjugation, we let $\calo_X(G)\subseteq\calo(G)$ denote the full 
subcategory whose objects are the members of $X$. 

%%, or the cosets of members of $X$, respectively. 

\item As a special case of (b), when $p$ is a prime, 
$\calo_p^f(G)\subseteq\calo_p(G)\subseteq\calo(G)$ denote the full 
subcategories whose objects are the finite $p$-subgroups and arbitrary 
$p$-subgroups of $G$, respectively. 

\end{enuma}
\end{Defi}

When it does not cause confusion, we write $[g]=Kg$ to denote a morphism in 
$\calo(G)$ induced by $g\in G$.

A morphism $f\:a\to b$ in a category $\calc$ is an \emph{epimorphism} (in 
the categorical sense) if $gf=hf$ implies $g=h$ for each pair of morphisms 
$g,h\:b\to c$. It will be useful to know that morphisms in an orbit 
category are epimorphisms.

\begin{Lem} \label{l:epi}
For every group $G$, all morphisms in $\calo(G)$, and hence all morphisms 
in every subcategory of $\calo(G)$, are epimorphisms in the categorical 
sense.
\end{Lem}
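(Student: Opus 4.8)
The plan is to reduce the categorical epimorphism condition to a concrete statement about $G$-sets and then verify it by a direct computation with cosets. First I would recall that $\Mor_{\calo(G)}(H,K)\cong\map_G(G/H,G/K)$, so a morphism $f\colon H\to K$ is nothing but a $G$-equivariant map of orbits; concretely $f=[g]$ sends $xH\mapsto xg^{-1}K$ for some $g\in G$ with $\9gH\le K$. Since the claim for an arbitrary subcategory of $\calo(G)$ is immediate once we know it in $\calo(G)$ itself (an epimorphism in a category remains an epimorphism in any category with more objects and morphisms against which it is tested only by fewer pairs $g,h$ — indeed the condition to check only gets weaker), it suffices to prove that every $f\colon H\to K$ in $\calo(G)$ is a categorical epimorphism.

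The key observation is that $f\colon G/H\to G/K$ is \emph{surjective} as a map of sets: its image is a nonempty union of $G$-orbits in $G/K$, and $G/K$ is a single orbit, so $f$ is onto. Now suppose $u,v\colon K\to L$ are two morphisms in $\calo(G)$ with $uf=vf$; viewing everything as $G$-maps of orbits, $u,v\colon G/K\to G/L$ agree after precomposition with the surjection $f\colon G/H\to G/K$. A surjection of sets is an epimorphism in the category of sets (and the forgetful functor from $G$-sets to sets is faithful), so $u=v$ as $G$-maps, hence as morphisms in $\calo(G)$. This is the whole argument; the only thing to spell out is the surjectivity of $f$, which follows because for any $xK\in G/K$ we have $xK = (xg)g^{-1}K = f(xgH)$.

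Alternatively, and staying entirely inside the coset description of $\Mor_{\calo(G)}$, one can argue directly: a morphism $K\to L$ is a coset $Lh$ with $\9hK\le L$, acting on $Kg\in\Mor_{\calo(G)}(H,K)$ by $Lh\cdot Kg = Lhg$. If $Lh$ and $Lh'$ satisfy $Lhg = Lh'g$, then $Lh = Lh'$ after multiplying on the right by $g^{-1}$, so the two morphisms $K\to L$ are equal. This makes the epimorphism property transparent: precomposition with a fixed morphism $Kg$ is given by right multiplication by $g$ on the relevant coset sets, which is a bijection and in particular injective.

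I do not expect any real obstacle here: the statement is essentially the remark that orbit maps are surjective. The only mild care needed is the passage to subcategories, where one must note that the defining condition for being a categorical epimorphism is tested against \emph{all} codomains and \emph{all} pairs of parallel morphisms out of the target, so restricting to a (full or non-full) subcategory can only make the condition easier to satisfy; hence an epimorphism in $\calo(G)$ is automatically an epimorphism in any subcategory to which it belongs.
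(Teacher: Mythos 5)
Your proposal is correct, and your second ("alternative") argument --- composition in the coset description is right multiplication, so $Lhg=Lh'g$ forces $Lh=Lh'$ --- is exactly the proof the paper gives. The leading argument via surjectivity of the orbit map $G/H\to G/K$ is an equally valid repackaging of the same fact, and your remark that the epimorphism condition only gets weaker when passing to a subcategory is the right justification for the second half of the statement.
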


\begin{proof} Let $Kg\in\Mor_{\calo_X(G)}(H,K)$ and 
$Lx,Ly\in\Mor_{\calo_X(G)}(K,L)$ be such that $Lx\circ Kg = Ly\circ Kg$ 
(for some $H,K,L\le G$). Then $Lxg=Lyg$, so $Lx=Ly$ as cosets and hence as 
morphisms in $\calo_X(G)$. Thus $Kg$ is an epimorphism.
\end{proof}

We next show a version of Lemma \ref{l:lim*(C0)} specialized to orbit 
categories. 

\begin{Lem} \label{l:X0<X}
Let $G$ be a group, and let $X_0\subseteq X$ be nonempty sets of subgroups 
of $G$, both invariant under conjugation. Assume that 
\begin{enumi} 

\item each member of $X$ is contained in a member of $X_0$, and

\item $X_0$ is closed under finite intersections.

\end{enumi}
Then for each $\calo_X(G)$-module $\Phi$, 
	\beqq \higherlim{\calo_X(G)}*\!\!(\Phi) \cong 
	\higherlim{\calo_{X_0}(G)}*\!\!(\Phi|_{\calo_{X_0}(G)}). 
	\label{e:X0<X} \eeqq
\end{Lem}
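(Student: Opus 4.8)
The plan is to deduce Lemma \ref{l:X0<X} from the general Lemma \ref{l:lim*(C0)} by taking $F$ to be the inclusion functor $\calo_{X_0}(G)\hookrightarrow\calo_X(G)$, so the only thing to check is that for every object $K$ of $\calo_X(G)$ (i.e.\ every $K\in X$), the undercategory $(K{\downarrow}F)\op$ is nonempty and directed. First I would unwind what $K{\downarrow}F$ is: its objects are pairs $(H,\varphi)$ with $H\in X_0$ and $\varphi\in\Mor_{\calo_X(G)}(K,H)$, that is, $\varphi=Hg$ with $\9gK\le H$; a morphism $(H_1,H_1g_1)\to(H_2,H_2g_2)$ is a morphism $\rho=H_2x\in\Mor_{\calo_X(G)}(H_1,H_2)$ with $H_2x\circ H_1g_1=H_2g_2$, i.e.\ $H_2xg_1=H_2g_2$. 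Nonemptiness is immediate from hypothesis (i): $K$ lies in some $H\in X_0$, and then $(H,H\cdot1)$ is an object (and more generally $(\9g{}^{-1}$ issues aside, conjugates of such $H$ work too).

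The heart of the argument is verifying the two directedness conditions of Definition \ref{d:directed} for $(K{\downarrow}F)\op$, equivalently the ``co-directedness'' conditions for $K{\downarrow}F$. For condition (a): given two objects $(H_1,H_1g_1)$ and $(H_2,H_2g_2)$, I want a common object mapping \emph{to} both in $K{\downarrow}F$. The natural candidate is built from the intersection $H_0=H_1^{g_1^{-1}g_2}\cap H_2$ or, more symmetrically, one replaces $H_1$ by its conjugate ${}^{g_2 g_1^{-1}}H_1$ so that both pairs are ``based'' compatibly and then intersects; concretely, set $H_3 = {}^{g_2}({}^{g_1^{-1}}H_1) \cap H_2$, which lies in $X_0$ by hypothesis (ii) (closure under finite intersections) and is conjugate-invariance-safe, and note $\9{g_2}K\le H_3$ since $\9{g_1}K\le H_1$ gives $\9{g_2}K = \9{g_2}(\9{g_1^{-1}}(\9{g_1}K))\le \9{g_2}(\9{g_1^{-1}}H_1)$ and $\9{g_2}K\le H_2$. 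Then $(H_3,H_3g_2)$ is an object of $K{\downarrow}F$, and the cosets $H_1\cdot(g_2g_1^{-1})\colon$ give morphisms $(H_3,H_3g_2)\to(H_1,H_1g_1)$ and $H_2\cdot 1\colon(H_3,H_3g_2)\to(H_2,H_2g_2)$ — one checks the compatibility equations $H_i x g_3 = H_i g_i$ hold by construction. Condition (b) is easier: in $K{\downarrow}F$ one needs, given two morphisms $\rho,\rho'\colon(H_1,H_1g_1)\to(H_2,H_2g_2)$, a morphism out of $(H_1,H_1g_1)$ co-equalizing them; but by Lemma \ref{l:epi} all morphisms in $\calo(G)$ are epimorphisms, so $\rho\circ(H_1g_1)=(H_2g_2)=\rho'\circ(H_1g_1)$ already forces $\rho=\rho'$ as morphisms in $\calo_X(G)$ — there is at most one morphism between any two objects of $K{\downarrow}F$ with a prescribed underlying structure, so (b) is vacuous. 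Hence $(K{\downarrow}F)\op$ is nonempty and directed, and Lemma \ref{l:lim*(C0)} applies to give the isomorphism \eqref{e:X0<X}.

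The step I expect to be the main obstacle is getting condition (a) exactly right: the bookkeeping with the twisting cosets $g_i$ and the conjugations is fiddly, and one has to choose the intersection and its ``base point'' so that (1) the result is genuinely an object of $X_0$ — this is where closure under finite \emph{intersections} of $X_0$, hypothesis (ii), gets used, and also conjugation-invariance of $X_0$ — and (2) both comparison cosets actually define morphisms in the undercategory, i.e.\ satisfy $\9xH_1\le H_3$-type containments and the composition identities. Once the right element is written down the verification is a short coset computation, but it is the kind of thing where sign/side conventions (recall composition is right to left, and $\Mor_{\calo(G)}(H,K)=\{Kg\mid \9gH\le K\}$) must be tracked carefully. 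Everything else — nonemptiness, condition (b) via Lemma \ref{l:epi}, and the final invocation of Lemma \ref{l:lim*(C0)} — is routine.
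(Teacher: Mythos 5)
Your proposal is correct and follows essentially the same route as the paper: apply Lemma \ref{l:lim*(C0)} to the inclusion $\calo_{X_0}(G)\hookrightarrow\calo_X(G)$, with nonemptiness from (i), condition (a) of directedness from conjugation-invariance plus closure under intersections, and condition (b) automatic because all morphisms are epimorphisms (Lemma \ref{l:epi}). The only blemish is the bookkeeping you yourself flagged: the comparison morphism $(H_3,H_3g_2)\to(H_1,H_1g_1)$ should be the coset $H_1\cdot(g_1g_2^{-1})$ rather than $H_1\cdot(g_2g_1^{-1})$, since compatibility requires $xg_2\in H_1g_1$ and one then checks $\9{g_1g_2^{-1}}H_3\le H_1$.
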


\begin{proof} We apply Lemma \ref{l:lim*(C0)}, with the inclusion functor 
$\cali\:\calo_{X_0}(G)\too\calo_X(G)$ in the role of $F$. For each $H\in 
X$, the undercategory $H{\downarrow}\cali$ is nonempty by (i), and it 
satisfies condition (a) in the definition of a directed category by (ii). 
Since all morphisms in $\calo_X(G)$ are epimorphisms by Lemma \ref{l:epi}, 
there is at most one morphism between any given pair of objects in 
$H{\downarrow}\cali$, and so condition (b) in Definition \ref{d:directed} 
also holds. Thus $H{\downarrow}\cali$ is directed, and \eqref{e:X0<X} follows 
from Lemma \ref{l:lim*(C0)}. 
\end{proof}

We now turn to some spectral sequences involving orbit categories.  

As usual, we say that a group $G$ is \emph{locally finite} if every 
finitely generated subgroup of $G$ is finite. In general, orbit spaces of 
locally finite groups are easier to work with than those of arbitrary 
infinite groups. For example, in a locally finite group $G$, the poset 
$\Fin(G)$ of its finite subgroups is a directed poset, and this is 
important when we want to describe limits over $\calo_p(G)$ in terms of 
finite subgroups of $G$. 

\begin{Prop} \label{p:spseq.Phi}
Let $G$ be a locally finite group, and let $X$ be a nonempty set of finite 
subgroups of $G$ invariant under conjugation. For each $K\le G$, let $X\cap K$ 
be the set of members of $X$ that are contained in $K$. Then for each 
commutative ring $R$ and each $R\calo_X(G)$-module $\Phi$, there is a first 
quarter spectral sequence 
	\beqq E_2^{ij} \cong \higherlim{K\in\Fin(G)}i\bigl( 
	\higherlim{\calo_{X\cap K}(K)}j(\Phi|_{\calo_{X\cap K}(K)}) \bigr) 
	\Longrightarrow \higherlim{\calo_X(G)}{i+j}(\Phi) 
	\label{e:spseq.Phi} \eeqq
of $R$-modules. In particular, 
	\[ \lim_{\calo_X(G)}(\Phi) \cong 
	\lim_{K\in\Fin(G)}\bigl(\lim_{\calo_{X\cap K}(K)}
	(\Phi|_{\calo_{X\cap K}(K)})\bigr), \]
and if $G$ is countable, then for each $n\ge1$ there is a short exact sequence 
	\begin{multline*} 
	0 \Right3{} \higherlim{K\in\Fin(G)}1\bigl(\higherlim{\calo_{X\cap 
	K}(K)}{n-1}(\Phi|_{\calo_{X\cap K}(K)})\bigr) \Right3{} 
	\higherlim{\calo_X(G)}n(\Phi) \Right3{} \\
	\lim_{K\in\Fin(G)}\bigl(\higherlim{\calo_{X\cap K}(K)}n
	(\Phi|_{\calo_{X\cap K}(K)})\bigr) \Right3{} 0. 
	\end{multline*}
\end{Prop}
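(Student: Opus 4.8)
The plan is to derive Proposition \ref{p:spseq.Phi} as an instance of Proposition \ref{p:spseq0}, applied with $\calc=\calo_X(G)$ and with $\D$ the poset of subcategories $\{\calo_{X\cap K}(K) : K\in\Fin(G)\}$. First I would check hypothesis (i) of Proposition \ref{p:spseq0}: since $G$ is locally finite, every $H\in X$ (already finite) lies in some $K\in\Fin(G)$, so every object of $\calo_X(G)$ lies in some member of $\D$; and since each morphism $[g]\:H\to H'$ in $\calo_X(G)$ is represented by $g\in G$, the finite subgroup $K=\gen{H,H',g}$ contains $H$, $H'$ and $g$, so $[g]$ is a morphism in $\calo_{X\cap K}(K)$. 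Hence $\calo_X(G)$ is the union of the subcategories in $\D$. For hypothesis (ii), note that $\Fin(G)$ is a directed poset (every pair $K_1,K_2$ of finite subgroups lies in the finite subgroup they generate), and $K\mapsto\calo_{X\cap K}(K)$ is order-preserving with $\calo_{X\cap K_1}(K_1),\calo_{X\cap K_2}(K_2)\subseteq\calo_{X\cap K_3}(K_3)$ whenever $K_1,K_2\le K_3$, so $\D$ is directed; strictly speaking $\D$ is a quotient poset of $\Fin(G)$, but one checks the map $\Fin(G)\to\D$ is cofinal (or simply that distinct $K$ can give the same subcategory, which does not affect directedness). Then Proposition \ref{p:spseq0} yields a first quarter spectral sequence $E_2^{ij}\cong\higherlim{\D}i(\cald\mapsto\higherlim{\cald}j(\Phi|_\cald))\implies\higherlim{\calo_X(G)}{i+j}(\Phi)$, which is \eqref{e:spseq.Phi} after rewriting higher limits over $\D$ as higher limits over $\Fin(G)$ via cofinality (or replacing $\D$ by $\Fin(G)$ throughout, since a cofinal functor between directed categories induces isomorphisms on higher limits by Lemma \ref{l:lim*(C0)}).

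The ``in particular'' clauses then follow from standard low-degree consequences of a first quarter spectral sequence. Taking $i=j=0$ and using that $E_2^{00}=\lim$ survives gives $\lim_{\calo_X(G)}(\Phi)\cong\lim_{K\in\Fin(G)}(\lim_{\calo_{X\cap K}(K)}(\Phi|_{\calo_{X\cap K}(K)}))$; more precisely, the edge map identifies the global limit with the limit over $\Fin(G)$ of the fiberwise limits, since all differentials out of and into the $(0,0)$ spot vanish for degree reasons. For the short exact sequence when $G$ is countable, $\Fin(G)$ is then a countable directed poset, so higher limits over it vanish above degree $1$ (by the standard fact that $\lim^i$ over a countable directed poset vanishes for $i\ge2$, which is where countability enters — cf.\ the discussion of limits over directed sets in the introduction). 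Thus the spectral sequence has only two nonzero columns $i=0,1$, and the resulting two-column spectral sequence degenerates into the usual five-term-type exact sequences: for each $n\ge1$ one gets the short exact sequence
	\[ 0 \Right3{} \higherlim{K\in\Fin(G)}1\bigl(\higherlim{\calo_{X\cap K}(K)}{n-1}(\Phi|)\bigr) \Right3{} \higherlim{\calo_X(G)}n(\Phi) \Right3{} \lim_{K\in\Fin(G)}\bigl(\higherlim{\calo_{X\cap K}(K)}n(\Phi|)\bigr) \Right3{} 0 , \]
with exactness on the left because there is no $E_2^{2,n-1}$ term to support a differential, and surjectivity on the right because there is no $E_2^{-1,n+1}$ term.

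I expect the main obstacle to be purely bookkeeping: verifying carefully that $\D$ (as literally a set of subcategories ordered by inclusion) is directed and that passing from $\Fin(G)$ to $\D$ does not change the higher limits. The cleanest fix is to observe that the assignment $K\mapsto\calo_{X\cap K}(K)$ defines a functor $\Fin(G)\to\D$ which is surjective on objects and such that each undercategory is nonempty and directed, so Lemma \ref{l:lim*(C0)} applies and lets me compute $\higherlim{\D}*$ as $\higherlim{\Fin(G)}*$ of the composite functor; this also handles the subtlety that a priori the functor $\cald\mapsto\higherlim{\cald}j(\Phi|_\cald)$ is defined on $\D$, and we need its pullback to $\Fin(G)$, namely $K\mapsto\higherlim{\calo_{X\cap K}(K)}j(\Phi|_{\calo_{X\cap K}(K)})$. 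A second, smaller point to be careful about is the vanishing of $\lim^{\ge2}$ over a countable directed poset, which must be cited rather than reproved; the introduction already flags references for higher limits over directed sets, so I would invoke one of those.
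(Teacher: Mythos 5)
Your proposal is correct and follows essentially the same route as the paper: apply Proposition \ref{p:spseq0} to the poset $\D$ of subcategories $\calo_{X\cap K}(K)$, then transport the higher limits from $\D$ to $\Fin(G)$ via the surjective functor $K\mapsto\calo_{X\cap K}(K)$ and Lemma \ref{l:lim*(C0)}, and read off the low-degree consequences (the paper cites Goblot/Jensen for the vanishing of $\lim^{\ge2}$ over countable directed posets, exactly as you anticipate). The only difference is that you spell out the verification that every morphism of $\calo_X(G)$ lies in some $\calo_{X\cap K}(K)$, which the paper leaves implicit.
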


\begin{proof} Let $\D$ be the poset of all subcategories $\calo_{X\cap 
K}(K)\subseteq \calo_X(G)$ for $K\in\Fin(G)$, and let $F\:\Fin(G)\too\D$ be 
the surjective functor that sends $K$ to $\calo_{X\cap K}(K)$. Then 
$\calo_X(G)$ is the union of the members of $\D$ since all members of $X$ 
are finite and $G$ is locally finite, and $\Fin(G)$ and $\D$ are directed 
posets since $G$ is locally finite. So by Proposition \ref{p:spseq0}, for 
each $R\calo_X(G)$-module $\Phi$, there is a first quarter spectral sequence 
	\beqq E_2^{ij} \cong \higherlim{\D}i \bigl( \cald\mapsto 
	\higherlim{\cald}j(\Phi|_\cald) \bigr) \implies 
	\higherlim{\calo_X(G)}{i+j}(\Phi). \label{e:spseq.Phi2} \eeqq

For each $\cald\in\D$, the undercategory $\cald\dn F$ is isomorphic to the 
poset of all $H\in\Fin(G)$ such that $\calo_{X\cap H}(H)\ge \cald$, and 
hence is nonempty and directed. So by Lemma \ref{l:lim*(C0)}, for each 
$R\D$-module $\Psi$, $\higherlim{\Fin(G)}*(\Psi\circ F)\cong 
\higherlim{\D}*(\Psi)$. In particular, \eqref{e:spseq.Phi} follows from 
\eqref{e:spseq.Phi2}.

If $G$ is countable, then $\Fin(G)$ is a countable directed poset, so 
higher limits over $\Fin(G)$ vanish in degrees greater than $1$, and the 
$E_2$-term of the spectral sequence vanishes except for the first two 
columns.
\end{proof}

More generally, if $G$ has cardinality $\aleph_m$ for some finite $m\ge0$, 
then $\Fin(G)$ also has cardinality $\aleph_m$. So by a theorem of Goblot 
\cite[Proposition 2]{Goblot} (see also \cite[Th\'eor\`eme 3.1]{Jensen}), in 
the spectral sequence of Proposition \ref{p:spseq.Phi}, the terms 
$E_2^{ij}$ are always zero for $i\ge m+2$.

We next construct a spectral sequence that describes higher limits of 
functors on orbit categories for a group extension. We refer to 
\eqref{e:F^R_*} and the discussion before that for the definition and 
description of right Kan extensions.

\iffalse
When $F\:\calc\too\cald$ is a functor between 
small categories, the \emph{right Kan extension} of a $\calc$-module 
$\Psi\:\calc\op\too\Ab$ is a $\cald$-module $F^R_*\Psi$, which can be 
defined by setting 
	\beqq (F^R_*\Psi)(d) = \lim_{F{\downarrow}d}\bigl( 
	(F{\downarrow}d)\op \xto{~\mu\op~} \calc\op \xto{~\Psi~} \Ab \bigr). 
	\label{e:rightKan} \eeqq
Here, $\mu$ is the forgetful functor sending an object $(c,F(c)\to d)$ to 
$c$. By \cite[\S\,X.3]{MacL-categ}, $F^R_*$ is right adjoint to the functor $F^*$ 
from $\cald\mod$ to $\calc\mod$ that sends $\Phi$ to $\Phi\circ F$. 
\fi

\begingroup

\begin{Thm} \label{t:G/H}
Let $R$ be a commutative ring, 
let $G$ be a (discrete) group, let $H\nsg G$ be a normal subgroup, and let 
$\chi\:G\too G/H$ be the natural homomorphism. Let $X$ and $Y$ be nonempty 
sets of subgroups of $G$ and $G/H$, respectively, both invariant under 
conjugation, and such that $K\in X$ implies $\chi(K)\in Y$. Let 
$\5\chi\:\calo_X(G)\too\calo_Y(G/H)$ be the functor that sends $K\in X$ to 
$\chi(K)$ and sends a morphism $[g]$ to $[\chi(g)]$, and let 
	\[ \5\chi^*\: R\calo_Y(G/H)\mod \Right4{} R\calo_X(G)\mod \]
be the functor that sends $\Phi$ to $\Phi\circ\5\chi$. 
Let $\5\chi^R_*\:R\calo_X(G)\mod\too R\calo_Y(G/H)\mod$ be 
the right Kan extension along $\5\chi$. 
\begin{enuma} 

\item For each $R\calo_X(G)$-module $\Phi$, there is a first 
quarter spectral sequence 
	\[ E_2^{ij} = \higherlim{\calo_Y(G/H)}i 
	\bigl((R^j\5\chi^R_*)(\Phi)\bigr) 
	\Longrightarrow \higherlim{\calo_X(G)}{i+j}(\Phi), \]
where $R^j\5\chi^R_*\:R\calo_X(G)\mod\too R\calo_Y(G/H)\mod$ is the 
$j$-th right derived functor of $\5\chi^R_*$. 

\item For each $R\calo_X(G)$-module $\Phi$, each subgroup 
$K/H\in Y$, and each $j\ge0$,
	\[ (R^j\5\chi^R_*)(\Phi)(K/H)\cong 
	\higherlim{\calo_{X\cap K}(K)}j
	(\Phi|_{\calo_{X\cap K}(K)}), \]
where $X\cap K$ is the set of members of $X$ contained in $K$. Under this 
identification, a morphism $[g]\:K_1/H\too K_2/H$ in $\calo_Y(G/H)$ (where 
$g\in G$ and $\9gK_1\le K_2$) is sent to the homomorphism
	\[ \higherlim{\calo_{X\cap K_2}(K_2)}j (\Phi|_{\calo_{X\cap K_2}(K_2)}) 
	\Right3{} \higherlim{\calo_{X\cap K_1}(K_1)}j (\Phi|_{\calo_{X\cap 
	K_1}(K_1)}) \]
induced by the functor $c_g^{K_1}\:\calo_{X\cap K_1}(K_1)\too\calo_{X\cap 
K_2}(K_2)$ that sends $L\in X\cap K_1$ to $\9gL\in X\cap K_2$ and sends a 
morphism $[x]$ to $[\9gx]$, together with the natural transformation of 
functors
	\[ \Phi|_{\calo_{X\cap K_2}(K_2)} \circ c_g^{K_1} \Right4{} 
	\Phi|_{\calo_{X\cap K_1}(K_1)} \]
that sends an object $L$ in $\calo_{X\cap K_2}(K_2)$ to 
$\Phi([g])\in\Hom_R(\Phi(\9gL),\Phi(L))$.

\end{enuma}
\end{Thm}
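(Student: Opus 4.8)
The plan is to deduce Theorem~\ref{t:G/H} from the general machinery of Section~\ref{s:background}, specifically Lemma~\ref{l:lim*(C)=0} applied to an injective resolution, together with an explicit identification of the right Kan extension along $\5\chi$ and its derived functors. The key observation is that the functor $\5\chi^*$ (composition with $\5\chi$) has a right adjoint $\5\chi^R_*$, so it is exact and sends injectives to injectives; hence $\5\chi^*$ applied to an injective resolution of $\Phi$ over $R\calo_X(G)$ gives an injective resolution after passing through $\5\chi^R_*$ at the level of $R\calo_Y(G/H)$, while $\lim_{\calo_X(G)}=\lim_{\calo_Y(G/H)}\circ\,\5\chi^R_*$ (as $\lim$ is $\5\chi^R_*$ followed by the limit over a point, or equivalently because a right adjoint of restriction composes correctly with global sections). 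This is the standard Grothendieck spectral sequence setup, and part~(a) will follow once we check that $\5\chi^R_*$ sends injectives to acyclics — which it does, being right adjoint to an exact functor.

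For part~(b), the main work is the explicit computation of the Kan extension via formula~\eqref{e:F^R_*}: $(\5\chi^R_*\Psi)(K/H)=\lim_{\5\chi\dn(K/H)}(\Psi\circ\mu\op)$, and likewise for the derived functors, which are the higher limits over the overcategory $\5\chi\dn(K/H)$. So first I would analyze the overcategory $\5\chi\dn(K/H)$: its objects are pairs $(L,[g])$ with $L\in X$ and $[g]\:\chi(L)\to K/H$ a morphism in $\calo_Y(G/H)$, i.e.\ $g\in G$ with $\9g\chi(L)\le K/H$, equivalently $\9gL\le K$ (using $H\le K$ and normality). The plan is to exhibit a functor from $\calo_{X\cap K}(K)$ into $\5\chi\dn(K/H)$ sending $L'\in X\cap K$ to $(L',[1])$, and show it is \emph{cofinal} (initial, in the overcategory direction) — more precisely that composing with $\mu$ and $\Psi$ doesn't change higher limits, via Lemma~\ref{l:lim*(C0)}. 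The crucial point is that every object $(L,[g])$ of $\5\chi\dn(K/H)$ receives a unique morphism from some $(L',[1])$ with $L'\in X\cap K$, namely take $L'=\9gL$ and the morphism $[g]\:L\to\9gL$; uniqueness follows because all morphisms in orbit categories are epimorphisms (Lemma~\ref{l:epi}), so the relevant undercategories in $\calo_X(G)$-over-$(K/H)$ have at most one morphism between objects and are directed. Then Lemma~\ref{l:lim*(C0)} (applied with the opposite categories) gives $\higherlim{\5\chi\dn(K/H)}j(\Psi\circ\mu\op)\cong\higherlim{\calo_{X\cap K}(K)}j(\Psi|_{\calo_{X\cap K}(K)})$, which is what part~(b) asserts for $\Psi=\Phi$.

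The functoriality statement in part~(b) — the description of the action of a morphism $[g]\:K_1/H\to K_2/H$ — is then a matter of unwinding how $\5\chi^R_*$ is functorial in the target variable: a morphism $K_1/H\to K_2/H$ in $\calo_Y(G/H)$ induces a functor $\5\chi\dn(K_1/H)\to\5\chi\dn(K_2/H)$ by post-composition, and under the cofinality identification this corresponds precisely to the conjugation functor $c_g^{K_1}\:\calo_{X\cap K_1}(K_1)\to\calo_{X\cap K_2}(K_2)$ together with the natural transformation $\Phi|\circ c_g^{K_1}\Rightarrow\Phi|$ given by $\Phi([g])$. I would verify this by tracing a representative cocycle through the bar resolution (Proposition~\ref{p:C*(C;Phi)}), using that on objects $(L,[1])\mapsto(\9gL,[g])$ maps under the cofinal functor to $(\9gL,[1])$ with the connecting morphism $[g]\:L\to\9gL$.

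I expect the main obstacle to be the verification that the comparison functor $\calo_{X\cap K}(K)\to\5\chi\dn(K/H)$ genuinely satisfies the hypotheses of Lemma~\ref{l:lim*(C0)} — that is, that for every object of the overcategory, the relevant undercategory is nonempty and directed. Nonemptiness and condition~(a) of directedness are straightforward from $X$ being conjugation-invariant and the kernel $H$ being contained in all the relevant groups; the subtle part is condition~(b), which we get for free only because orbit categories have at most one morphism between any two objects once a source is fixed (Lemma~\ref{l:epi}) — one has to confirm this epimorphism property descends correctly to the overcategory. The second potential snag is bookkeeping in the functoriality claim: keeping straight which conjugation ($\9g(-)$ versus $(-)^g$) and which composition convention appears, and checking the natural transformation is the one claimed rather than its inverse or a twist; this is routine but error-prone, so I would do it carefully on the bar complex rather than abstractly.
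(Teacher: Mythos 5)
Your overall strategy is the paper's: part (a) is the Grothendieck spectral sequence for the factorization $\lim_{\calo_X(G)}=\lim_{\calo_Y(G/H)}\circ\,\5\chi^R_*$, and part (b) is the pointwise computation of the derived Kan extension via the overcategories $\5\chi\dn(K/H)$, which are compared with $\calo_{X\cap K}(K)$. But there is one genuine gap, and it sits exactly where the paper does its real work. You assert that the formula $(\5\chi^R_*\Psi)(K/H)=\lim_{\5\chi\dn(K/H)}(\Psi\circ\mu\op)$ holds ``likewise for the derived functors.'' That is not automatic. By definition, $(R^j\5\chi^R_*)(\Phi)(K/H)$ is the $j$-th cohomology of $\lim_{\5\chi\dn(K/H)}(I_*\circ\mu\op)$ for an injective resolution $I_*$ of $\Phi$ in $R\calo_X(G)\mod$; to identify this with $\higherlim{\5\chi\dn(K/H)}j(\Phi\circ\mu\op)$, and hence with $\higherlim{\calo_{X\cap K}(K)}j(\Phi|_{\calo_{X\cap K}(K)})$, one must know that each $I_n\circ\mu\op$ is acyclic for limits over the overcategory --- i.e.\ that restriction along $\mu$ (equivalently, restriction from $\calo_X(G)$ to $\calo_{X\cap K}(K)$) sends injectives to acyclics. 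This is the usual composite-functor issue and cannot be waved away by adjunctions: the left adjoint of restriction along $\mu$ is a left Kan extension whose defining colimits need not be filtered (indeed $L\dn\mu$ can be empty when no conjugate of $L$ is subconjugate to $K$), so Lemma \ref{l:lim*(C0)} does not apply to $\mu$. The paper closes this gap by direct computation: every injective is a direct factor of a product of modules $\I_L^A$, and a choice of right coset representatives $W$ for $K$ in $G$ gives $\Mor_{\calo(G)}(L,U)=\coprod_{w\in W}\Mor_{\calo(K)}(\9wL,U)$ for $U\le K$, whence $\I_{\calo_X(G),L}^A|_{\calo_{X\cap K}(K)}\cong\prod_{w}\I_{\calo_{X\cap K}(K),\9wL}^A$ is again injective. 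Some such argument must appear in your write-up; without it, part (b) is unproved. (The analogous decomposition of $\Mor_\calc(c,c')$ over $\Mor_\cald(F(c),d)$ shows the fact you want holds for any functor between small categories, but it still has to be proved.)

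Two smaller points. In (a) you write that $\5\chi^*$ ``has a right adjoint $\5\chi^R_*$, so it is exact and sends injectives to injectives'': having a right adjoint implies neither. What is needed is that $\5\chi^*$ is exact because it is computed objectwise, and therefore its right adjoint $\5\chi^R_*$ preserves injectives; that is what feeds the Grothendieck spectral sequence. In (b), the comparison functor $L\mapsto(L,[1])$ is in fact an equivalence of categories onto $\5\chi\dn(K/H)$, since each object $(L,[\chi(g)])$ is isomorphic to $(\9gL,[1])$ via $[g]$; this is cleaner than the cofinality argument you sketch, though both routes work, and your worry about condition (b) of directedness is indeed handled by Lemma \ref{l:epi}. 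The functoriality bookkeeping at the end is routine, as you say.
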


\begin{proof} Set $\4G=G/H$ for short. For each $K\le G$, set 
$\4K=\chi(K)=K H/H$. 

By \eqref{e:F^R_*}, for each $R\calo_X(G)$-module 
$\Phi$ and each $\4K\in Y$, 
	\[ \5\chi^R_*(\Phi)(\4K) = 
	\lim_{\5\chi{\downarrow}\4K} \bigl((\5\chi{\downarrow}\4K)\op 
	\Right3{\mu\op} \calo_X(G)\op \Right3{\Phi} R\mod \bigr), \]
where $\mu$ is the forgetful functor sending $(L,\4L\to\4K)$ 
to $L$. Since inverse limits are left exact, so is $\5\chi^R_*$. 

\smallskip

\noindent\textbf{(a) } This is a special case of the Grothendieck spectral 
sequence, in the form described in \cite[Theorem 5.8.3]{Weibel}, applied to 
the triangle
	\[ \vcenter{\xymatrix@C=30pt@R=25pt{ 
	\calo_X(G)\mod \ar[rr]^{\5\chi^R_*} \ar[dr]_{\lim} 
	&& \calo_Y(\4G)\mod \ar[dl]^{\lim} \\
	& R\mod }} \]
of categories and functors. We already saw that $\5\chi^R_*$ is left exact, and 
limits are always left exact. So to be able to apply the spectral sequence 
(with $\5\chi_*^R$ and $\textup{lim}$ in the roles of $G$ and $F$), 
it remains to check that the triangle commutes up to natural isomorphism, 
and that $\5\chi^R_*$ sends injectives to injectives. This last condition 
(sending injectives to injectives) holds by \cite[Proposition 
2.3.10]{Weibel} and since $\5\chi^R_*$ is right adjoint to the exact functor 
$\5\chi^*$.

Let $\6R$ denote the constant functor on $\calo_Y(\4G)$ that sends all 
objects to $R$ and all morphisms to $\Id_R$. For each $R\calo_X(G)$-module 
$\Phi$, there are natural isomorphisms 
	\[ \lim_{\calo_Y(\4G)}(\5\chi^R_*(\Phi)) \cong 
	\Mor_{R\calo_Y(\4G)\mod}(\6R,\5\chi^R_*(\Phi)) 
	\cong \Mor_{R\calo_X(G)\mod}(\5\chi^*(\6R),\Phi) \cong 
	\lim_{\calo_X(G)}(\Phi), \]
where the second holds since $\5\chi^R_*$ is right adjoint to $\5\chi^*$. So 
the triangle commutes up to natural isomorphism.

\smallskip

\noindent\textbf{(b) } Let $0 \too \Phi \too I_0 \too I_1 \too I_2 \too 
\cdots$ be a resolution of $\Phi$ by injective $\calo_X(G)$-modules. Thus 
$(R^*\5\chi^R_*)(\Phi)$ is the homology of the complex of 
$R\calo_Y(\4G)$-modules
	\beqq 0 \too \5\chi^R_*(I_0) \Right3{} \5\chi^R_*(I_1) \Right3{}
	\5\chi^R_*(I_2) \Right3{} \cdots. \label{e:Rqk(FM)-1} \eeqq

We first claim that for each subgroup $K\le G$, 
	\beqq \textup{$\Phi$ injective in $\calo_X(G)\mod$ $\implies$ 
	$\Phi|_{\calo_{X\cap K}(K)}$ injective in 
	$\calo_{X\cap K}(K)\mod$,} 
	\label{e:Rqk(FM)-2} \eeqq
where $X\cap K$ is the set of members of $X$ contained in $K$. 
Recall (Proposition \ref{p:I_c^A}) that each injective object in 
$\calo_X(G)\mod$ is a direct factor in a product of injectives of the form 
$\I_L^A$ for $L\in X$ and $A$ an injective abelian group. So it 
suffices to prove that the restriction of each such $\I_L^A$ is injective. 
Fix a set $W\subseteq G$ of right coset representatives for $K$ in $G$ 
(thus $G=KW$). For given $L$ and $A$, we have 
	\[ \I_{\calo_X(G),L}^A|_{\calo_{X\cap K}(K)} \cong 
	\prod_{\substack {w\in W \\[1pt] \9wL\le K}} 
	\I_{\calo_{X\cap K}(K),\9wL}^A \]
(and vanishes if no conjugate of $L$ is contained in $K$), 
since for each $U\le K$ and $L\le G$, there is a bijection 
	\begin{multline*} 
	\Mor_{\calo(G)}(L,U) = \{U g\,|\,g\in G,~\9gL\le U\} \\
	= \coprod\nolimits_{w\in W} \{U hw\,|\, h\in K, ~\9{hw}L\le U\} 
	\cong \coprod\nolimits_{w\in W} \Mor_{\calo(K)}(\9wL,U) 
	\end{multline*}
that sends $[g]\in\Mor_{\calo(G)}(L,U)$, for $g\in Kw$, to 
$[gw^{-1}]\in\Mor_{\calo(G)}(\9wL,U)$. This proves \eqref{e:Rqk(FM)-2}.

Fix some $\4K=K/H$ in $Y$. Consider the functor 
	\[ \lambda\:\calo_{X\cap K}(K) \Right3{} \5\chi{\downarrow}\4K \] 
that sends an object $L$ in $\calo_{X\cap K}(K)$ to the object 
$(L,\4L\xto{[1]}\4K)$ and similarly for morphisms. Then $\lambda$ is 
injective (split by the forgetful functor). Its image is a full 
subcategory, since for each morphism 
	\[ (L_1,\4L_1\xto{[1]}\4K) \Right3{[g]} 
	(L_2,\4L_2\xto{[1]}\4K) \]
in $\5\chi{\downarrow}\4K$ between objects of $\Im(\lambda)$, we have 
$[1]\circ[\chi(g)]=[1]$ in 
$\Mor_{\calo(\4G)}(\4L_1,\4K)$, and hence $g\in K$ and 
$[g]\in\Mor_{\calo(K)}(L_1,L_2)$. Each object in 
$\5\chi{\downarrow}\4K$ has the form 
$(L,\4L\xto{[\chi(g)]}\4K)$ for some $L\in X$ and 
$g\in G$ such that $\9gL\le K$, and this pair is 
isomorphic to the object 
$(\9gL,\4{\9gL}\xto{[1]}\4K)=\lambda(\9gL)$. So $\lambda$ 
is an equivalence of categories. 

Thus for each $n\ge0$, 
	\begin{align*} 
	\5\chi^R_*(I_n)(\4K) &= \lim_{\5\chi{\downarrow}\4K} 
	\bigl( (\5\chi{\downarrow}\4K)\op \Right2{\mu\op} \calo_X(G)\op 
	\Right2{I_n} R\mod \bigr) \\
	&\cong \lim_{\calo_{X\cap K}(K)} 
	\bigl( \calo_{X\cap K}(K)\op \xto{~\lambda\op~} 
	(\5\chi{\downarrow}\4K)\op \xto{~\mu\op~} \calo_X(G)\op 
	\xto{~I_n~} R\mod \bigr) \\
	&= \lim_{\calo_{X\cap K}(K)}(I_n|_{\calo_{X\cap K}(K)}),
	\end{align*}
where the isomorphism holds by Lemma \ref{l:lim*(C0)} and since each 
undercategory $x\dn\lambda$ has an initial object (hence $(x\dn\lambda)\op$ 
is directed). So 
	\begin{multline*} 
	(R^*\5\chi^R_*)(\Phi)(\4K) \cong H^*\bigl(0\to \5\chi^R_*(I_0)(\4K) \too 
	\5\chi^R_*(I_1)(\4K)\too \cdots\bigr) \\
	\cong H^*\bigl( 0 \to \lim(I_0|_{\calo_{X\cap K}(K)}) \too 
	\lim(I_1|_{\calo_{X\cap K}(K)}) \too\cdots \bigr) \cong 
	\higherlim{\calo_{X\cap K}(K)}*(\Phi|_{\calo_{X\cap K}(K)}), 
	\end{multline*}
where the last isomorphism holds since each $I_n|_{\calo_{X\cap K}(K)}$ 
is injective. 

Now assume that $K_1/H,K_2/H\in Y$, and $g\in G$ is such that $\9gK_1\le 
K_2$. We must identify, for each $n\ge0$, the homomorphism
	\[ [g]^* \: \lim_{\calo_{X\cap K_2}(K_2)}(I_n|_{\calo_{X\cap K_2}(K_2)}) 
	\Right3{} 
	\lim_{\calo_{X\cap K_1}(K_1)}(I_n|_{\calo_{X\cap K_1}(K_1)}) \]
induced by $[g]\in\Mor_{\calo_Y(G/H)}(K_1/H,K_2/H)$. Fix an element 
$y=(y_L)_{L\in X\cap K_2}$ in the first inverse limit; thus $y_L\in I_n(L)$ 
for each $L$. This extends to an element 
	\[ \5y=(\5y_{(L,\4L\xto{[h]}\4K_2)}) \in 
	\lim_{\5\chi\dn\4K_2}(I_n\circ\mu\op) \qquad \textup{where}\qquad 
	\5y_{(L,\4L\xto{[h]}\4K_2)}=I_n([h])(y_{\9xL}), \]
and $[g]$ sends $\5y$ to 
	\[ \5z = (\5z_{(L,\4L\xto{[h]}\4K_1)}) \qquad\textup{where}\qquad 
	\5z_{(L,\4L\xto{[h]}\4K_1)} = \5y_{(L,\4L\xto{[gh]}\4K_2)}. \]
Then $[g]^*(y_L)_{L\in X\cap K_2}=(z_L)_{L\in X\cap K_1}$, where for each 
$L\in X\cap K_1$, 
	\[ z_L = \5z_{(L,\4L\xto{[1]}\4K_1)} = \5y_{(L,\4L\xto{[g]}\4K_2)} = 
	I_n([g])(y_{(\9gL)}). \]
Thus $[g]^*$ is induced by $c_g^{K_1}\:\calo_{X\cap 
K_1}(K_1)\too\calo_{X\cap K_2}(K_2)$, together with the natural 
transformation of functors induced by $[g]\in\Mor_{\calo_X(G)}(L,\9gL)$ for 
$L\in X\cap K_1$.
\end{proof}

\iffalse
Consider the following diagram, where $c_g^{K_1}$ and $c_g^G$ send 
an object $L$ to $\9gL$ and a morphism $[x]$ to $[\9gx]$. 
	\[ \vcenter{\xymatrix@C=30pt@R=25pt{ 
	\calo_{X\cap K_1}(K_1)\op \ar[r]^-{\lambda_1\op} \ar[d]^{c_g^{K_1}} & 
	(\5\chi\dn \4K_1)\op \ar[r]^-{\mu_1\op} 
	& \calo_X(G)\op \ar[r]^-{I_n} \ar[d]^{c_g^G} & R\mod \ar[d]^{\Id} \\
	\calo_{X\cap K_2}(K_2)\op \ar[r]^-{\lambda_2\op} & (\5\chi\dn \4K_2)\op 
	\ar[r]^-{\mu_2\op} & 
	\calo_X(G)\op \ar[r]^-{I_n} & R\mod 
	}} \]
Then $c_g^G$ induces the identity from $\lim_{\calo_X(G)}(I_n)$ to itself. 
\mynote{How best to say that?} Since the diagram commutes except for the 
square on the right, this show that 
$[g]\in\Mor_{\calo_Y(G/H)}(K_1/H,K_2/H)$ induces the map \mynote{HOW BEST 
TO SAY IT??}
\fi

\endgroup

Note that the Lyndon-Hochschild-Serre spectral sequence in cohomology (see, 
e.g., \cite[Theorem 6.8.2]{Weibel}) for the group extension $1\too H\too 
G\too G/H\too1$ is the special case of the spectral sequence of Theorem 
\ref{t:G/H} when $X$ and $Y$ both contain only the trivial subgroup. Other 
special cases of this spectral sequence will be looked at in the next 
section.

%%\newpage

\section{\texorpdfstring{$\Lambda$}{Lambda}-functors}
\label{s:Lambda}

We now restrict attention to certain functors $\Lambda^*(G;M)$, defined 
originally in \cite[Definition 5.3]{JMO} when $G$ is a finite group. Most 
of our main results, including Theorems \ref{ThA}--\ref{ThD} in the 
introduction, are proven in this section.

\begin{Defi} \label{d:Lambda}
Fix a prime $p$. Let $G$ be a group, and let $M$ be a $\Z G$-module. Define 
a functor $F_M\:\calo_p(G)\op\too\Ab$ by setting 
	\[ F_M(P) = \begin{cases} 
	M & \textup{if $P=1$} \\
	0 & \textup{if $P\ne1$}
	\end{cases} \]
for $P$ a $p$-subgroup of $G$, where $\Aut_{\calo_p(G)}(1)\cong G$ has 
the given action on $M$. Set 
	\[ \Lambda^*(G;M) = \higherlim{\calo_p(G)}*(F_M) 
	\qquad\textup{and}\qquad   \Lambda_f^*(G;M) = 
	\higherlim{\calo_p^f(G)}*(F_M|_{\calo_p^f(G)}). \]
More generally, if $X$ is a set of $p$-subgroups of $G$ invariant under 
conjugation, set 
	\[ \Lambda^*_X(G;M)=\higherlim{\calo_X(G)}*(F_M|_{\calo_X(G)}). \]
\end{Defi}

Note that the functors $\Lambda^*(-;-)$ depend on the prime $p$, even 
though that has been left out of the notation to keep it simple. 

The importance of these graded groups comes from the following proposition, 
which was shown in the finite case in \cite[Lemma 5.4]{JMO}, and in a more 
general situation in \cite[Lemma 5.10]{BLO3}. (In fact, it was formulated 
in \cite[Lemma 5.4]{JMO} in a way that held more generally for compact Lie 
groups.)

\begin{Prop} \label{p:lim*=Lbda}
Let $p$ be a prime, and let $G$ be a group. 
Let $X$ be a set of $p$-subgroups of $G$ invariant under conjugation, fix 
$Q\in X$, and assume that either 
\begin{enumi} 
\item each member of $X$ contains an abelian subgroup of finite index; or 
\item each member of $X$ that contains $Q$ contains it with finite index; or 
\item for each $Q<P\in X$, we have $Q<N_P(Q)$. 
\end{enumi}
Assume also that $Q\le P\in X$ implies $N_P(Q)\in X$. Let $Y$ be the set of 
all subgroups $P/Q$ for $Q\nsg P\in X$. Then for each 
$\Phi\:\calo_X(G)\op\too\Ab$ such that $\Phi(P)=0$ for all $P\in X$ not 
$G$-conjugate to $Q$, 
	\beqq \higherlim{\calo_X(G)}*(\Phi) \cong 
	\Lambda^*_Y(N_G(Q)/Q;\Phi(Q)) .
	\label{e:lim*=Lbda} \eeqq
\end{Prop}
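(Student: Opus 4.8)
The plan is to reduce the statement, in two steps, to a case that can be settled by hand. First, hypotheses (i) and (ii) are present only to force (iii): if $Q<P\in X$ with $[P:Q]<\infty$, then passing to the quotient of $P$ by the core of $Q$ in $P$ (a finite $p$-group) and using that a proper subgroup of a finite $p$-group is properly contained in its normalizer gives $Q<N_P(Q)$; and if $P$ has an abelian subgroup of finite index, the same conclusion follows from the appendix. So I may assume (iii) holds, together with the standing assumption that $Q\le P\in X$ implies $N_P(Q)\in X$. Second, let $X_1\subseteq X$ be the conjugation-invariant set of those $P\in X$ that contain a $G$-conjugate of $Q$. Since $\Phi(P_0)=0$ unless $P_0$ is $G$-conjugate to $Q$, and any morphism out of a conjugate of $Q$ in an orbit category has target containing a conjugate of $Q$, every composable chain $P_0\to\cdots\to P_n$ contributing a nonzero factor to the bar complex of Proposition \ref{p:C*(C;Phi)} already lies in the full subcategory $\calo_{X_1}(G)$. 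Hence $C^*(\calo_X(G);\Phi)\cong C^*(\calo_{X_1}(G);\Phi|_{\calo_{X_1}(G)})$, so $\higherlim{\calo_X(G)}{*}(\Phi)\cong\higherlim{\calo_{X_1}(G)}{*}(\Phi|_{\calo_{X_1}(G)})$, and I may assume $X=X_1$. (Note that Lemma \ref{l:lim*(C0)} does \emph{not} apply to this reduction, since the undercategory of a member of $X$ not containing a conjugate of $Q$ may be empty.)

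The heart of the matter is the case $Q\nsg G$. Then $Q$ is its only $G$-conjugate, so $\Phi$ is supported on the single object $Q$, every $P\in X$ satisfies $Q\nsg P$, and I claim the quotient functor $\calo_X(G)\to\calo_Y(G/Q)$, $P\mapsto P/Q$ and $[g]\mapsto[g]$, is an isomorphism of categories. It is a bijection on objects by the definition of $Y$, and a short computation with the coset descriptions of the morphism sets (using $Q\le P$) shows it is bijective on each $\Mor$-set. Under this isomorphism $\Phi$ corresponds to $F_{\Phi(Q)}|_{\calo_Y(G/Q)}$ — both are concentrated on the trivial object with value $\Phi(Q)$ — and the induced identification $\Aut_{\calo_X(G)}(Q)\cong N_G(Q)/Q\cong\Aut_{\calo_Y(G/Q)}(1)$ matches the two module structures on $\Phi(Q)$. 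As higher limits are preserved by isomorphisms of categories, this gives $\higherlim{\calo_X(G)}{*}(\Phi)\cong\Lambda^*_Y(G/Q;\Phi(Q))$, which is what is wanted since here $N_G(Q)=G$.

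For the general case, set $N=N_G(Q)$; then $Q\nsg N$, and $Y$ is the same set whether formed from $X$ in $G$ or from $X\cap N$ in $N$. The plan is to establish $\higherlim{\calo_X(G)}{*}(\Phi)\cong\higherlim{\calo_{X\cap N}(N)}{*}(\Phi|_{\calo_{X\cap N}(N)})$ and then invoke the previous paragraph inside $N$. This last isomorphism I would prove on the level of bar complexes: using (iii) and the closure of $X$ under $P\mapsto N_P(Q)$, one constructs a deformation retraction of $C^*(\calo_X(G);\Phi)$ onto the subcomplex indexed by chains all of whose subgroups normalize one fixed, compatibly chosen $G$-conjugate of $Q$ — the retracting homotopy slides a subgroup $P$ occurring in a chain down to $N_P(Q')$ for the relevant conjugate $Q'\le P$, and (iii) guarantees this is a genuine step whenever $Q'<P$ — and then identifies that subcomplex, up to the evident conjugation symmetry, with the bar complex of $\calo_{X\cap N}(N)$.

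I expect this last reduction from $G$ to $N_G(Q)$ to be the main obstacle. When all members of $X$ are finite it can be done by order/counting arguments, as in \cite[Lemma 5.4]{JMO} and \cite[Lemma 5.10]{BLO3}, but in general hypothesis (iii) is essential: the appendix exhibits $p$-groups $Q<P$ with $N_P(Q)=Q$, for which the slide-down retraction degenerates, so some such hypothesis cannot be avoided. A secondary, routine point is the bookkeeping of how a morphism $[g]$ of $\calo_Y(N/Q)$ acts, i.e. matching the $N/Q$-action coming from $\Aut_{\calo_X(G)}(Q)$ with the one built into $F_{\Phi(Q)}$.
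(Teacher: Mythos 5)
Your preliminary reductions are fine: (i) and (ii) do reduce to (iii) exactly as in Lemma \ref{l:Q<NP(Q)}; your bar-complex argument (via Proposition \ref{p:C*(C;Phi)}) for discarding the members of $X$ containing no conjugate of $Q$ is correct; and the normal case $Q\nsg G$ works as you say, since the quotient functor $\calo_{X_1}(G)\too\calo_Y(G/Q)$ is an isomorphism of categories carrying $\Phi$ to $F_{\Phi(Q)}|_{\calo_Y(G/Q)}$. But the fourth paragraph is where the entire content of the proposition lives, and there you offer only a sketch that you yourself flag as ``the main obstacle''. The proposed deformation retraction of $C^*(\calo_X(G);\Phi)$ onto a subcomplex of chains normalizing one fixed conjugate of $Q$ is not constructed, and as stated it faces real problems: a morphism $P_0\to P_i$ in $\calo_X(G)$ is a coset $P_ig$, so the ``relevant conjugate $Q'\le P_i$'' is determined only up to $P_i$-conjugacy and there is no canonical choice to slide down to; the single step $P\mapsto N_P(Q')$ need not produce a chain, because the given morphisms need not restrict to the normalizers of compatibly chosen conjugates; and even granting a well-defined subcomplex, you supply no homotopy and no reason that the inclusion is a quasi-isomorphism. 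Hypothesis (iii) gives $Q'<N_P(Q')$ whenever $Q'<P$, but by itself that is nowhere near an acyclicity statement for the complementary part of the complex.

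The paper does not attempt any such retraction. It applies \cite[Proposition 5.3]{BLO3} to the functor $\alpha\:\calo_Y(N_G(Q)/Q)\too\calo_X(G)$ sending $P/Q$ to $P$, and the whole proof consists of verifying the four hypotheses (a)--(d) of that result: $\alpha$ identifies $\Aut_{\calo_Y(N_G(Q)/Q)}(1)$ with $\Aut_{\calo_X(G)}(Q)$; every isotropy subgroup of the $N_G(Q)/Q$-action on $\Mor_{\calo_X(G)}(Q,P)$, computed to be $N_{P^x}(Q)/Q$, is nontrivial and lies in $Y$ --- this is precisely where (iii) and the closure of $X$ under $P\mapsto N_P(Q)$ enter; all morphisms in $\calo_X(G)$ are epimorphisms (Lemma \ref{l:epi}); and every morphism $Q\to R$ fixed by $P/Q$ extends to $P$. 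A self-contained version of your plan would in effect have to reprove \cite[Proposition 5.3]{BLO3} (or the analogous \cite[Lemma 5.4]{JMO}); as written, your key step is a gap, not a proof.
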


%%\defeq \higherlim{\calo_Y(N_G(Q)/Q}*(F_{\Phi(Q)}). 

\begin{proof} Since (i) and (ii) imply (iii) by Lemma \ref{l:Q<NP(Q)}, we 
assume from now on that (iii) holds. Set $\Gamma=N_G(Q)/Q$. We want to 
apply \cite[Proposition 5.3]{BLO3} with $\calc=\calo_X(G)$. Let 
$\alpha\:\calo_Y(\Gamma)\too\calo_X(G)$ be the functor that sends an object 
$P/Q\in Y$ to $P\in X$, and sends a morphism $P/Q\xto{[gQ]}R/Q$ to 
$P\xto{[g]}R$ for $g\in N_G(Q)$ such that $\9gP\le R$. To prove the lemma, 
we must show that conditions (a)--(d) in \cite[Proposition 5.3]{BLO3} all 
hold. 

\noindent\textbf{(a) } By definition, $\alpha$ sends 
$\Aut_{\calo_Y(\Gamma)}(1)\cong \Gamma$ isomorphically to 
$\Aut_{\calo_X(G)}(Q)$. 

\noindent\textbf{(b) } We must show, for all $P\in X$ not isomorphic to $Q$ 
in $\calo_X(G)$, that all isotropy subgroups of the action of 
$\Gamma=N_G(Q)/Q$ on $\Mor_{\calo_X(G)}(Q,P)$ are nontrivial and in $Y$. To 
see this, fix $[x]\in\Mor_{\calo_X(G)}(Q,P)$. For $g\in N_G(Q)$, the class 
$[g]\in\Gamma$ is in the isotropy subgroup if and only if $Pxg=Px$, 
equivalently, $xgx^{-1}\in P$. Thus the isotropy subgroup is $(P^x\cap 
N_G(Q))/Q=N_{P^x}(Q)/Q$, and this is nontrivial by (iii) (and is in $Y$ by 
definition of $Y$). 

\noindent\textbf{(c) } Since all morphisms in $\calo_X(G)$ are 
epimorphisms by Lemma \ref{l:epi}, this holds automatically.

\noindent\textbf{(d) } We must show, for all $P/Q\in Y$, all $R\in X$, and 
all $[x]\in\Mor_{\calo_X(G)}(Q,R)$ fixed by the action of $P/Q$ on this 
morphism set, that $[x]$ extends to a morphism defined on $P$. We just saw 
in the proof of (b) that $P\le R^x$ since $P/Q$ fixes $[x]$, so $\9xP\le 
R$, and $[x]$ also defines a morphism from $P$ to $R$. 
\end{proof}

For example, the conclusion of Proposition \ref{p:lim*=Lbda} holds if $X$ and 
$Y$ are the sets of all finite $p$-subgroups, or if $X$ is the set of all 
$p$-subgroups of $G$ that contain an abelian subgroup of finite index and 
$Y$ is the set of all $P/Q$ for $Q\nsg P\in X$.

The following theorem is a special case of Proposition \ref{p:spseq.Phi}, 
and helps reduce computations of $\Lambda^*_f(G;-)$ to the finite case.

\begin{Thm} \label{t:spseq.Lambda} 
Fix a prime $p$. Let $G$ be a locally finite group, and let 
$M$ be a $\Z G$-module. Then there is a first quarter spectral sequence 
	\[ E_2^{ij} \cong \higherlim{\Fin(G)}i \bigl( \Lambda^j(-;M) \bigr)
	\Longrightarrow \Lambda^{i+j}_f(G;M). \] 
If $G$ is countable, then this reduces to short exact sequences
	\[ 0 \Right2{} \higherlim{\Fin(G)}1\!(\Lambda^{i-1}(-;M)) \Right3{} 
	\Lambda^i_f(G;M) \Right3{} \lim_{\Fin(G)}\Lambda^i(-;M) 
	\Right2{} 0 \] 
for each $i\ge0$. 
\end{Thm}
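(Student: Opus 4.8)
The plan is to obtain Theorem \ref{t:spseq.Lambda} as the special case of Proposition \ref{p:spseq.Phi} in which $R=\Z$, $X$ is the set of all finite $p$-subgroups of $G$, and $\Phi=F_M|_{\calo_p^f(G)}$. This $X$ is nonempty --- it contains the trivial subgroup --- and invariant under conjugation, and $\calo_X(G)=\calo_p^f(G)$ by Definition \ref{d:orbit.cat}(c), so the abutment $\higherlim{\calo_X(G)}{i+j}(\Phi)$ of the spectral sequence of Proposition \ref{p:spseq.Phi} is precisely $\Lambda^{i+j}_f(G;M)$. Thus there is nothing to do on the abutment side.

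The real content is the identification of the $E_2$-term. First I would fix $K\in\Fin(G)$ and observe that, since $K$ is finite, every $p$-subgroup of $K$ is finite, so $X\cap K$ is the set of all $p$-subgroups of $K$ and $\calo_{X\cap K}(K)=\calo_p(K)$. The restriction of $F_M$ to $\calo_p(K)$ sends the trivial subgroup to $M$ with the action of $\Aut_{\calo_p(K)}(1)\cong K$ obtained by restricting the given $G$-action, and sends every nontrivial $p$-subgroup of $K$ to $0$; that is, it is exactly the functor defining $\Lambda^*(K;M)$ for $M$ regarded as a $\Z K$-module by restriction. Hence $\higherlim{\calo_{X\cap K}(K)}{j}(\Phi|_{\calo_{X\cap K}(K)})\cong\Lambda^j(K;M)$ for all $j\ge0$.

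Next I would check the functoriality over $\Fin(G)$: for an inclusion $K_1\subseteq K_2$ of finite subgroups, Proposition \ref{p:spseq.Phi} equips $\{\Lambda^j(K;M)\}_{K}$ with the structure map induced on higher limits by the inclusion of subcategories $\calo_p(K_1)\hookrightarrow\calo_p(K_2)$ together with the restriction of $F_M|_{\calo_p(K_2)}$ to $\calo_p(K_1)$, and one has to recognize this as the ordinary restriction homomorphism $\Lambda^j(K_2;M)\to\Lambda^j(K_1;M)$. With these identifications the spectral sequence of Proposition \ref{p:spseq.Phi} becomes the one asserted. Finally, when $G$ is countable, $\Fin(G)$ is a countable directed poset, so $\higherlim{\Fin(G)}{i}(-)=0$ for $i\ge2$ (as already used in the proof of Proposition \ref{p:spseq.Phi}); the $E_2$-page is then concentrated in the columns $i=0,1$, and the spectral sequence collapses to the stated short exact sequences.

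I do not expect a genuine obstacle here; the argument is a matter of unwinding definitions. The one step that merits a sentence of care is the verification just described, namely matching the $\Fin(G)$-module structure on $\{\Lambda^j(-;M)\}$ built into Proposition \ref{p:spseq.Phi} (restriction of functors along inclusions of orbit subcategories, followed by the natural maps of inverse limits) with the standard restriction maps on the $\Lambda$-functors.
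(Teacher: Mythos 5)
Your proposal is correct and follows exactly the paper's own proof, which simply invokes Proposition \ref{p:spseq.Phi} with $X$ the set of finite $p$-subgroups and $\Phi=F_M$; the identifications $\calo_{X\cap K}(K)=\calo_p(K)$ and $\higherlim{\calo_p(K)}j(F_M|_{\calo_p(K)})\cong\Lambda^j(K;M)$, as well as the vanishing of $\higherlim{\Fin(G)}i$ for $i\ge2$ in the countable case, are the same points the paper relies on (the latter already being built into the statement of Proposition \ref{p:spseq.Phi}).
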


\begin{proof} This follows from Proposition \ref{p:spseq.Phi}, applied with 
$X$ the set of finite $p$-subgroups of $G$ and $\Phi=F_M$. 
\end{proof}

\iffalse
If $G$ is countable, then $\Fin(G)$ is a countable directed 
poset, so $\lim_{\Fin(G)}^i(\Psi)=0$ for every $\Fin(G)$-module $\Psi$ and 
every $i\ge2$, and all but two columns in $E_2^{ij}$ are zero. 
\fi

The following result was shown for finite $G$ and $\Z_{(p)}G$-modules in 
\cite[Proposition 6.1(ii)]{JMO}. 

\begin{Prop} \label{p:lim*(OpG)}
Fix a prime $p$, a group $G$, and a $\Z G$-module $M$. 
\begin{enuma} 

\item If $O_p(G)\ne1$, then $\Lambda^*(G;M)=0$.

\item If there is a nontrivial finite normal $p$-subgroup $1\ne Q\nsg G$, 
then $\Lambda^*_f(G;M)=0$.

\item If $G$ is a nontrivial locally finite $p$-group, then 
$\Lambda^*_f(G;M)=0$.

\end{enuma}
\end{Prop}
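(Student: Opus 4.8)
The plan is to obtain (a) and (b) directly from Lemma~\ref{l:X0<X}, and to deduce (c) from them by running the spectral sequence of Theorem~\ref{t:spseq.Lambda}. For (a) and (b) fix a nontrivial normal $p$-subgroup $Q\nsg G$ — for (a), since $O_p(G)\ne1$ there is such a $Q$ (e.g.\ $Q=O_p(G)$), and for (b) take the given finite $Q$. Let $X$ be the set of all $p$-subgroups of $G$ (for (a)) or all finite $p$-subgroups of $G$ (for (b)), and set $X_0=\{P\in X\mid P\ge Q\}$. Then $X_0\subseteq X$, both sets are nonempty and conjugation-invariant (the latter because $Q\nsg G$), every $P\in X$ is contained in $PQ\in X_0$ (and $PQ$ is a $p$-subgroup, being an extension of the $p$-group $PQ/Q\cong P/(P\cap Q)$ by the $p$-group $Q$, and finite when $P$ and $Q$ are), and $X_0$ is closed under finite intersections. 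Hence Lemma~\ref{l:X0<X} applies and identifies $\Lambda^*(G;M)$, resp.\ $\Lambda^*_f(G;M)$, with $\higherlim{\calo_{X_0}(G)}*(F_M|_{\calo_{X_0}(G)})$. But every object of $\calo_{X_0}(G)$ contains $Q\ne1$, hence is nontrivial, so $F_M$ restricts to the zero functor and this higher limit vanishes. This gives (a) and (b).

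For (c), let $G$ be a nontrivial locally finite $p$-group, and feed the spectral sequence $E_2^{ij}\cong\higherlim{\Fin(G)}i(\Lambda^j(-;M))\implies\Lambda^{i+j}_f(G;M)$ of Theorem~\ref{t:spseq.Lambda}. Every $K\in\Fin(G)$ is a finite $p$-group; if $K\ne1$ then $K$ is a nontrivial normal $p$-subgroup of itself, so $\Lambda^*(K;M)=0$ by part (b) applied to $K$, while $\Lambda^0(1;M)=M$ and $\Lambda^{>0}(1;M)=0$ since $\calo_p(1)$ has a single object. Thus the $\Fin(G)$-module $\Lambda^j(-;M)$ vanishes for $j\ge1$, and for $j=0$ it is the module $\Psi$ sending the trivial subgroup to $M$ and every other finite subgroup to $0$. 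So $E_2^{ij}=0$ for $j\ge1$, all differentials vanish, and $\Lambda^n_f(G;M)\cong\higherlim{\Fin(G)}n(\Psi)$ for every $n$; it remains to prove $\higherlim{\Fin(G)}*(\Psi)=0$.

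For that I would use the short exact sequence of $\Fin(G)$-modules $0\to\Psi\to\6M\to\Psi'\to0$, where $\6M$ is the constant functor at $M$ and $\Psi'$ sends the trivial subgroup to $0$ and every other finite subgroup to $M$. Since the trivial subgroup is an initial object of $\Fin(G)$, unravelling Definition~\ref{d:I_c^A} gives $\6M=\I_{\Fin(G),1}^M$, so $\6M$ is acyclic by Proposition~\ref{p:I_c^A}(c), with $\higherlim{\Fin(G)}0(\6M)=M$. On the other hand, the bar complex of Proposition~\ref{p:C*(C;Phi)} identifies $C^*(\Fin(G);\Psi')$ with $C^*(\calss;\6M)$, where $\calss=\Fin(G)\sminus\{1\}$ is the poset of nontrivial finite subgroups, so $\higherlim{\Fin(G)}*(\Psi')\cong\higherlim{\calss}*(\6M)$; and $\calss$ is a nonempty directed poset (this is where the local finiteness and nontriviality of $G$ enter), so its nerve is contractible and $\higherlim{\calss}*(\6M)$ is $M$ in degree $0$ and $0$ in positive degrees. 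Putting these into the long exact sequence of higher limits over $\Fin(G)$ yields $\higherlim{\Fin(G)}i(\Psi)\cong\higherlim{\Fin(G)}{i-1}(\Psi')=0$ for $i\ge2$, and in low degrees the exact sequence $0\to\higherlim{\Fin(G)}0(\Psi)\to M\xto{\delta}M\to\higherlim{\Fin(G)}1(\Psi)\to0$ in which $\delta$, being induced by $\6M\to\Psi'$, is the identity on global sections and hence an isomorphism; so $\higherlim{\Fin(G)}0(\Psi)=\higherlim{\Fin(G)}1(\Psi)=0$ too, and $\Lambda^*_f(G;M)=0$.

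The main obstacle is this last vanishing, $\higherlim{\Fin(G)}*(\Psi)=0$, in part (c): one cannot simply iterate (a)/(b), since a locally finite $p$-group may have no nontrivial finite normal $p$-subgroup at all (for instance McLain's group), and the naive extra-degeneracy contracting homotopy for the bar complex of $\Psi$ fails because it does not see the hypothesis; the genuine input is the acyclicity of the constant functor on the directed poset $\Fin(G)\sminus\{1\}$, combined with the bookkeeping in the long exact sequence that is forced on us because $\Fin(G)$ has an initial but not a terminal object.
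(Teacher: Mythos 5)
Your proofs of (a) and (b), and the spectral-sequence reduction at the start of (c), are essentially identical to the paper's: the same application of Lemma \ref{l:X0<X} with $X_0$ the $p$-subgroups containing $Q$ (the paper invokes part (a) rather than part (b) for the vanishing of $\Lambda^*(K;M)$ when $1\ne K\in\Fin(G)$, but for finite $K$ this is the same statement), leading in both cases to $\Lambda^*_f(G;M)\cong\higherlim{\Fin(G)}*(\Psi_M)$ with $\Psi_M$ the functor supported at the trivial subgroup. Where you diverge is in the final vanishing $\higherlim{\Fin(G)}*(\Psi_M)=0$. The paper disposes of this in one line: fix any nontrivial finite $R\le G$ and restrict to the subposet $\Fin^0(G)$ of finite subgroups containing $R$; this inclusion satisfies the hypotheses of Lemma \ref{l:lim*(C0)} (cofinal and closed under the relevant intersections, by local finiteness), so the higher limits may be computed over $\Fin^0(G)$, where $\Psi_M$ is identically zero. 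Your route instead runs the long exact sequence of $0\to\Psi_M\to\6M\to\Psi'\to0$, using that $\6M=\I_1^M$ is acyclic and that $\higherlim{\Fin(G)}*(\Psi')\cong\higherlim{\calss}*(\6M)$ for the directed poset $\calss=\Fin(G)\sminus\{1\}$; the bookkeeping with the connecting map is correct, and your observation that $\delta$ is the identity on degree-zero limits is the right way to kill $\lim^0$ and $\lim^1$ simultaneously. Both arguments ultimately rest on the same input (acyclicity of functors over nonempty directed posets, which is where nontriviality and local finiteness of $G$ enter), but the paper's cofinality argument gets there without any exact-sequence chase; yours is longer but has the mild advantage of isolating exactly which homological fact is being used. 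Your closing remark is also well taken: one genuinely cannot iterate (a)/(b) on $G$ itself, since (as the paper's own Example \ref{ex:N(Q)=Q} shows) a locally finite $p$-group may have no nontrivial finite normal subgroup.
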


\begin{proof} \textbf{(a) } Set $Q=O_p(G)$ for short, and let 
$\calo_p^0(G)\subseteq\calo_p(G)$ be the full subcategory with objects the 
$p$-subgroups of $G$ that contain $Q$. 

We apply Lemma \ref{l:X0<X} with $X$ the set of all $p$-subgroups 
of $G$ and $X_0$ the set of those that contain $Q$. Every $p$-subgroup of 
$G$ is contained in one that contains $Q$ since $Q$ is a normal 
$p$-subgroup, and intersections of $p$-subgroups containing $Q$ again 
contain $Q$. Thus conditions (i) and (ii) in Lemma \ref{l:X0<X} hold, and 
hence 
	\[ \higherlim{\calo_p(G)}*(\Phi)\cong 
	\higherlim{\calo_p^0(G)}*(\Phi|_{\calo_p^0(G)}) \]
for each $\calo_p(G)$-module $\Phi$ by Lemma \ref{l:X0<X}. So 
$\Lambda^*(G;M)=0$ since $F_M|_{\calo_p^0(G)}=0$. 

\smallskip

\noindent\textbf{(b) } Now apply Lemma \ref{l:X0<X} with $X$ the set of 
finite $p$-subgroups of $G$ and $X_0$ the set of those that contain $Q$. 
Since $Q$ is finite, each member of $X$ is contained in a member of $X_0$. 
The rest of the proof goes through exactly as in the proof of (a).

\smallskip

\noindent\textbf{(c) } Assume $G\ne1$ is a locally finite $p$-group. For 
each nontrivial finite subgroup $1\ne Q\le P$, $\Lambda^*(Q;M)=0$ by (a). 
Also, $\Lambda^i(1;M)=H^i(1;M)=0$ for $i>0$, while $\Lambda^0(1;M)=M$. So 
$E^{ij}_2=0$ for all $j>0$ in the spectral sequence of Theorem 
\ref{t:spseq.Lambda}. Hence by that spectral sequence, 
$\Lambda^i_f(G;M)\cong E^{i0}_2\cong\higherlim{\Fin(G)}i(\Psi_M)$ for all 
$i\ge0$, where $\Psi_M\:\Fin(G)\too\Ab$ sends the trivial subgroup to $M$ 
and all others to $0$. 

Fix a nontrivial finite subgroup $1\ne R\le G$, and let $\Fin^0(G)$ be the 
set of finite subgroups of $G$ that contain $R$. Then each $H\in\Fin(G)$ is 
contained in $\gen{H,R}\in\Fin^0(G)$ since $G$ is locally finite. Also, if 
$H\in\Fin(G)$ is contained in $K_1,K_2\in\Fin^0(G)$, then $K_1\cap 
K_2\in\Fin^0(G)$ also contains $H$. So the categories 
$\Fin^0(G)\subseteq\Fin(G)$ satisfy the hypotheses of Lemma 
\ref{l:lim*(C0)}, and hence 
	\[ \higherlim{\Fin(G)}*(\Psi_M) \cong 
	\higherlim{\Fin^0(G)}*(\Psi_M|_{\Fin^0(G)}) = 0 \]
since $\Psi|_{\Fin^0(G)}=0$.
\end{proof}

When $G$ is locally finite, $\Phi=F_M$ for some $\Z G$-module $M$, and $X$ 
and $Y$ are the sets of all $p$-subgroups of $G$ and $G/H$, the spectral 
sequence of Theorem \ref{t:G/H} takes the following form:

\begin{Thm} \label{t:Lb.sp.seq}
Fix a prime $p$, a locally finite group $G$, a normal subgroup $H\nsg G$, 
and a $\Z G$-module $M$. Then there is a first quarter spectral sequence 
	\[ E_2^{ij} = \higherlim{\calo_p(G/H)}i\bigl( P/H \mapsto 
	\Lambda^j(P;M) \bigr) 
	\Longrightarrow \Lambda^{i+j}(G;M) . \] 
\end{Thm}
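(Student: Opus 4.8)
The plan is to obtain Theorem \ref{t:Lb.sp.seq} as the special case of Theorem \ref{t:G/H} in which $R=\Z$, the set $X$ is taken to be the set of all $p$-subgroups of $G$, and the set $Y$ is taken to be the set of all $p$-subgroups of $G/H$, and finally $\Phi=F_M$. First I would check that these choices are admissible: $X$ and $Y$ are nonempty (both contain the trivial subgroup) and are invariant under conjugation, and $K\in X$ implies $\chi(K)\in Y$ since a quotient of a $p$-group is a $p$-group. So Theorem \ref{t:G/H}(a) gives a first quarter spectral sequence
	\[ E_2^{ij} = \higherlim{\calo_p(G/H)}i\bigl((R^j\5\chi^R_*)(F_M)\bigr)
	\implies \higherlim{\calo_p(G)}{i+j}(F_M) = \Lambda^{i+j}(G;M), \]
where $\5\chi\:\calo_p(G)\to\calo_p(G/H)$ is the functor induced by $\chi\:G\to G/H$.

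Next I would identify the coefficient system $P/H\mapsto (R^j\5\chi^R_*)(F_M)(P/H)$ with $P/H\mapsto\Lambda^j(P;M)$, functorially in $P/H\in\calo_p(G/H)$. By Theorem \ref{t:G/H}(b), applied with $X$ the set of all $p$-subgroups of $G$, for each subgroup $P/H\in Y$ (so $P\le G$ is a subgroup containing $H$ with $P/H$ a $p$-group) we have
	\[ (R^j\5\chi^R_*)(F_M)(P/H) \cong
	\higherlim{\calo_{X\cap P}(P)}j\bigl(F_M|_{\calo_{X\cap P}(P)}\bigr). \]
Here $X\cap P$ is the set of $p$-subgroups of $G$ contained in $P$, which is exactly the set of all $p$-subgroups of $P$, so $\calo_{X\cap P}(P)=\calo_p(P)$ and $F_M|_{\calo_{X\cap P}(P)}=F_{\Res^G_P M}$, the corresponding functor for the group $P$ acting on $M$ by restriction. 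Hence the right-hand side is $\Lambda^j(P;M)$ by Definition \ref{d:Lambda}. For the functoriality: a morphism $[g]\:P_1/H\to P_2/H$ in $\calo_p(G/H)$ (with $g\in G$, $\9gP_1\le P_2$) is sent, by the second half of Theorem \ref{t:G/H}(b), to the map $\Lambda^j(P_2;M)\to\Lambda^j(P_1;M)$ induced by the conjugation functor $c_g^{P_1}\:\calo_p(P_1)\to\calo_p(P_2)$ together with the natural transformation $F_M([g])$. On the value of $F_M$ at the trivial subgroup this natural transformation is the action of $g$ on $M$, so this is precisely the morphism in the orbit category $\calo_p(G)$ structure that defines the functor $P/H\mapsto\Lambda^j(P;M)$ as a $\calo_p(G/H)$-module; I would spell this identification out carefully, since it is the one place where one must match two a priori different descriptions of the same $\calo_p(G/H)$-module.

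With both the $E_2$-term and its module structure identified, substituting into the spectral sequence of Theorem \ref{t:G/H}(a) yields exactly the statement of Theorem \ref{t:Lb.sp.seq}, and the first-quadrant property is inherited. The only real obstacle I anticipate is the bookkeeping in the functoriality step — verifying that the $\calo_p(G/H)$-module structure on $(R^j\5\chi^R_*)(F_M)$ described in Theorem \ref{t:G/H}(b) agrees on the nose with the obvious structure on $P/H\mapsto\Lambda^j(P;M)$ (in particular, that the natural transformation in Theorem \ref{t:G/H}(b), evaluated at the trivial subgroup, reproduces the $G$-action on $M$ that makes $\Lambda^j(-;M)$ into a functor on $\calo_p(G/H)$). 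Everything else is a direct substitution into a result already proved in the excerpt.
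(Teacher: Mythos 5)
Your proposal is correct and follows exactly the route the paper takes: the paper presents Theorem \ref{t:Lb.sp.seq} as the immediate specialization of Theorem \ref{t:G/H} to $X$ the set of all $p$-subgroups of $G$, $Y$ the set of all $p$-subgroups of $G/H$, and $\Phi=F_M$, with Theorem \ref{t:G/H}(b) supplying the identification of the $E_2$-term as $P/H\mapsto\Lambda^j(P;M)$. Your extra care in matching the $\calo_p(G/H)$-module structures is exactly the bookkeeping implicit in the paper's one-line derivation.
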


Another special case of Theorem \ref{t:G/H} is the following:

\begin{Thm} \label{t:Lb=Lbf}
Fix a prime $p$, and assume $G$ is a group all of whose $p$-subgroups are 
locally finite. Then for each $\Z G$-module $M$, the natural homomorphism 
$\Lambda^*(G;M)\too\Lambda^*_f(G;M)$ is an isomorphism. 
\end{Thm}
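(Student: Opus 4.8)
The plan is to obtain this as a special case of Theorem~\ref{t:G/H}, taking $H$ to be the trivial subgroup of $G$ (so that $\chi$ is the identity), $X$ the set of finite $p$-subgroups of $G$, and $Y$ the set of all $p$-subgroups of $G$. With these choices the functor $\5\chi$ of Theorem~\ref{t:G/H} is the inclusion $\iota\:\calo_p^f(G)\too\calo_p(G)$, the hypothesis ``$K\in X\Rightarrow\chi(K)\in Y$'' holds trivially, and part~(a) of that theorem provides a first quadrant spectral sequence
	\[ E_2^{ij}=\higherlim{\calo_p(G)}i\bigl((R^j\iota^R_*)(F_M|_{\calo_p^f(G)})\bigr)
	\Longrightarrow \higherlim{\calo_p^f(G)}{i+j}(F_M|_{\calo_p^f(G)})
	= \Lambda^{i+j}_f(G;M), \]
where $\iota^R_*$ denotes the right Kan extension along $\iota$.

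The main step is to compute the $\calo_p(G)$-modules $(R^j\iota^R_*)(F_M|_{\calo_p^f(G)})$, and this is where the hypothesis on $G$ enters. By Theorem~\ref{t:G/H}(b), the value of $(R^j\iota^R_*)(F_M|_{\calo_p^f(G)})$ at a $p$-subgroup $K\le G$ is $\higherlim{\calo_p^f(K)}j(F_M|_{\calo_p^f(K)})=\Lambda^j_f(K;M)$. Since every $p$-subgroup $K$ of $G$ is locally finite, $K$ is either trivial or a nontrivial locally finite $p$-group; in the latter case $\Lambda^j_f(K;M)=0$ for all $j\ge0$ by Proposition~\ref{p:lim*(OpG)}(c), while for $K=1$ we have $\Lambda^0_f(1;M)=M$ and $\Lambda^j_f(1;M)=0$ for $j>0$. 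Hence $(R^j\iota^R_*)(F_M|_{\calo_p^f(G)})=0$ for all $j>0$, and $(R^0\iota^R_*)(F_M|_{\calo_p^f(G)})$ is the $\calo_p(G)$-module supported on the trivial subgroup with value $M$ there; the description in Theorem~\ref{t:G/H}(b) of the effect on morphisms shows that an endomorphism $[g]$ of the trivial subgroup acts on this copy of $M$ by $F_M([g])$, so this module is canonically $F_M$ itself. Therefore the spectral sequence collapses onto its bottom row, $E_2^{i0}=\higherlim{\calo_p(G)}i(F_M)=\Lambda^i(G;M)$, yielding an isomorphism $\Lambda^*(G;M)\cong\Lambda^*_f(G;M)$.

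It remains to identify this isomorphism with the homomorphism induced by the inclusion $\iota$. Since $\iota^*$ is exact, $\iota^R_*$ is right adjoint to an exact functor and so preserves injectives; applying $\iota^R_*$ to an injective resolution of $F_M|_{\calo_p^f(G)}$ over $\calo_p^f(G)$ therefore yields a complex of injective $\calo_p(G)$-modules whose cohomology, by the vanishing just established, is $F_M$ concentrated in degree zero — that is, an injective resolution of $F_M$ over $\calo_p(G)$. Because $\lim_{\calo_p(G)}\circ\,\iota^R_*=\lim_{\calo_p^f(G)}$ (test against the constant functor, using the adjunction), computing $\Lambda^*(G;M)$ from this resolution literally recovers $\Lambda^*_f(G;M)$, and tracing through the counit $\iota^*\iota^R_*\too\Id$ shows the comparison map is restriction along $\iota$. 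I expect this last identification of the isomorphism with the natural map to be the only delicate point; the computation of the derived Kan extensions and the collapse of the spectral sequence are routine once Theorem~\ref{t:G/H} and Proposition~\ref{p:lim*(OpG)}(c) are in hand, and the one genuine observation is that the hypothesis ``all $p$-subgroups of $G$ are locally finite'' is exactly what makes Proposition~\ref{p:lim*(OpG)}(c) applicable to every such $K$.
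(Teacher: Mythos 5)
Your proposal is correct and follows essentially the same route as the paper: both apply Theorem \ref{t:G/H} with $H=1$, $X$ the finite $p$-subgroups and $Y$ all $p$-subgroups, identify the $E_2$-term via part (b) as $\higherlim{\calo_p(G)}i(\Lambda^j_f(-;M))$, and collapse it using Proposition \ref{p:lim*(OpG)}(c) together with the computation at the trivial subgroup. Your extra care in identifying the resulting isomorphism with the natural comparison map is a welcome addition that the paper's proof leaves implicit.
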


\begin{proof} We again apply Theorem \ref{t:G/H}, this time with $H=1$ the 
trivial subgroup, $X$ the set of finite $p$-subgroups of $G$, and $Y$ the 
set of all $p$-subgroups. For each $\calo_p(G)$-module $\Phi$, the spectral 
sequence takes the form 
	\[ E_2^{ij} = \higherlim{\calo_p(G)}i
	\Bigl(K\mapsto\higherlim{\calo_p^f(K)}j(\Phi|_{\calo_p^f(K)}) \Bigr) 
	\implies \higherlim{\calo_p^f(G)}{i+j}(\Phi). \]
When $\Phi=F_M$, this yields a spectral sequence
	\beqq E_2^{ij} = \higherlim{\calo_p(G)}i(\Lambda^j_f(-;M)) 
	\implies \Lambda^{i+j}_f(G;M). \label{e:LbLbf} \eeqq

If $K\ne1$ is a nontrivial $p$-subgroup of $G$, then $\Lambda^*_f(K;M)=0$ 
by Proposition \ref{p:lim*(OpG)}(c) and since $K$ is locally finite. If 
$K=1$, then $\Lambda^*_f(K;M)\cong H^*(K;M)$ is $M$ in degree $0$ and zero 
in higher degrees. Thus $\Lambda^0_f(-;M)\cong F_M|_{\calo_p^f(G)}$, while 
$\Lambda^j_f(-;M)=0$ (as a $\calo_p^f(G)$-module) for $j>0$. So by 
\eqref{e:LbLbf}, for each $i\ge0$, 
	\beq \Lambda^i_f(G;M) \cong E_2^{i0} \cong 
	\higherlim{\calo_p(G)}i(F_M) = \Lambda^i(G;M). \qedhere \eeq
\end{proof}

We are now ready to prove \cite[Lemma 5.12]{BLO3}, and through that finish 
the proof of \cite[Theorem 8.7]{BLO3}. In Step 1 of the argument given in 
\cite{BLO3}, we claimed that certain injective functors $\mathfrak{I}_k$ 
are inverse systems of groups and surjections. Unfortunately, the maps in 
the inverse systems need not be surjective, which means that we can get 
into trouble with nonvanishing ${\lim}^1(-)$. This is due to the fact that if 
$Q$ and $P_0<P$ are $p$-subgroups of a locally finite group $G$, then the 
natural map $\Mor_{\calo_p(G)}(Q,P_0) \too \Mor_{\calo_p(G)}(Q,P)$ need not 
be injective. 

Instead, we use the following argument, based on Theorems 
\ref{t:spseq.Lambda} and \ref{t:Lb=Lbf}.

\begin{Thm} \label{t:5.12}
Fix a prime $p$. 
Let $G$ be a locally finite group, and let $M$ be a $\Z G$-module. Assume, 
for some finite subgroup $H_0\le G$, that $\Lambda^*(H;M)=0$ for each 
finite subgroup $H\le G$ that contains $H_0$. Then $\Lambda^*(G;M)=0$. In 
particular, this holds if $M$ is a $\Z_{(p)}G$-module and $C_G(M)$ contains 
at least one element of order $p$. 
\end{Thm}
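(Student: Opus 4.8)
The plan is to handle the theorem in two stages: first the general statement, where the hypothesis on finite subgroups $H\ge H_0$ is converted into the vanishing of $\Lambda^*(G;M)$ using the reduction machinery already set up, and then the ``in particular'' clause, which amounts to observing that its hypotheses guarantee the hypotheses of the general statement.

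For the general statement I would begin by applying Theorem \ref{t:Lb=Lbf}: since $G$ is locally finite, so are all of its $p$-subgroups, hence $\Lambda^*(G;M)\cong\Lambda^*_f(G;M)$, and it suffices to show $\Lambda^*_f(G;M)=0$. For that I would feed $M$ into the first quarter spectral sequence of Theorem \ref{t:spseq.Lambda},
\[ E_2^{ij}\cong\higherlim{\Fin(G)}i\bigl(\Lambda^j(-;M)\bigr)\implies\Lambda^{i+j}_f(G;M), \]
and show that every $E_2^{ij}$ vanishes. Fix $j$, and let $\Fin^0(G)\subseteq\Fin(G)$ be the sub-poset of finite subgroups of $G$ containing $H_0$. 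By hypothesis the functor $\Lambda^j(-;M)$ on $\Fin(G)$ vanishes on $\Fin^0(G)$, so it is enough to prove that restriction to $\Fin^0(G)$ does not change the higher limits, i.e.\ $\higherlim{\Fin(G)}i(\Lambda^j(-;M))\cong\higherlim{\Fin^0(G)}i(\Lambda^j(-;M)|_{\Fin^0(G)})$. This is Lemma \ref{l:lim*(C0)} applied to the inclusion $\Fin^0(G)\hookrightarrow\Fin(G)$: for $H\in\Fin(G)$ the undercategory consists of the $K\in\Fin^0(G)$ with $H\le K$, which is nonempty since $\gen{H,H_0}$ is finite and contains both $H$ and $H_0$, and any two of its members contain their intersection, which is again finite and contains $H$ and $H_0$ — so the opposite undercategory is directed. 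Thus $E_2^{ij}=0$ for all $i,j$, whence $\Lambda^*_f(G;M)=0$ and so $\Lambda^*(G;M)=0$. (This is the same mechanism as in the proof of Proposition \ref{p:lim*(OpG)}(c), with $H_0$ playing the role of the fixed nontrivial subgroup there.)

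For the ``in particular'' clause I would choose $g\in C_G(M)$ of order $p$ and set $H_0=\gen{g}$, a finite subgroup. If $H\le G$ is finite with $H_0\le H$, then $M|_H$ is a $\Z_{(p)}H$-module and $g\in C_G(M)\cap H=C_H(M)$, so $p$ divides $|C_H(M)|$; by \cite[Proposition 6.1(ii)]{JMO} (the finite-group case) this forces $\Lambda^*(H;M)=0$. The general statement just proved then yields $\Lambda^*(G;M)=0$.

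Granted the earlier results, the argument above is short; the real weight sits in Theorem \ref{t:Lb=Lbf} and the spectral sequence of Theorem \ref{t:spseq.Lambda}, and I expect the only genuinely new input needed here to be the finite-group vanishing statement, for which I see no route avoiding a citation to \cite{JMO}: when $C_H(M)$ has no nontrivial normal $p$-subgroup (e.g.\ $H=S_5$ acting through its sign character with $p=2$) one cannot reduce $\Lambda^*(H;M)=0$ to Proposition \ref{p:lim*(OpG)}(a), not even after stripping off $C_H(M)$ using the extension spectral sequence of Theorem \ref{t:Lb.sp.seq}. Finally I would point out that the first stage — the passage from finite subgroups to $G$ — is precisely where the original argument in \cite{BLO3} went wrong, since the inverse systems over $\Fin(G)$ used there were mistakenly taken to have surjective structure maps; Theorems \ref{t:spseq.Lambda} and \ref{t:Lb=Lbf} are what repair this.
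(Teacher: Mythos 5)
Your proposal is correct and follows essentially the same route as the paper's proof: restrict the $E_2$-term of the spectral sequence of Theorem \ref{t:spseq.Lambda} from $\Fin(G)$ to the cofinal directed sub-poset of finite subgroups containing $H_0$ via Lemma \ref{l:lim*(C0)}, conclude $\Lambda^*_f(G;M)=0$, pass to $\Lambda^*(G;M)$ by Theorem \ref{t:Lb=Lbf}, and handle the ``in particular'' clause with $H_0=\gen{g}$ and \cite[Proposition 6.1(ii)]{JMO}. The only difference is the order in which Theorems \ref{t:spseq.Lambda} and \ref{t:Lb=Lbf} are invoked, which is immaterial.
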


\begin{proof} Let $\Fin_0(G)$ be the set of finite subgroups of $G$ that 
contain $H_0$. Thus by assumption, $\Lambda^*(K;M)=0$ for all 
$K\in\Fin_0(G)$. Since $G$ is locally finite, each finite subgroup 
$K\in\Fin(G)$ is contained in $\gen{K,H_0}\in\Fin_0(G)$. Also, the 
intersection of two members of $\Fin_0(G)$ again lies in the set. Since 
there is at most one morphism between any pair of objects in the category 
$\Fin(G)$, this proves that opposite categories of undercategories for the 
inclusion $\cali\:\Fin_0(G)\too\Fin(G)$ are nonempty and directed, and 
hence by Lemma \ref{l:lim*(C0)} that 
	\[ \higherlim{\Fin(G)}*(\Lambda^*(-;M)) \cong 
	\higherlim{\Fin_0(G)}*(\Lambda^*(-;M)|_{\Fin_0(G)}) = 0. \]
Hence $\Lambda^*_f(G;M)=0$ by the spectral sequence of 
Theorem \ref{t:spseq.Lambda}, and so $\Lambda^*(G;M)=0$ by Theorem 
\ref{t:Lb=Lbf}.

If $M$ is a $\Z_{(p)}G$-module and there is $g\in C_G(M)$ of order $p$, 
then $\Lambda^*(H;M)=0$ for each $H\in\Fin(G)$ containing $g$ by 
\cite[Proposition 6.1(ii)]{JMO}. So $\Lambda^*(G;M)=0$ by the first part of 
the statement, applied with $H_0=\gen{g}$.
\end{proof}

Lemma 5.12 of \cite{BLO3} was applied only once in that paper: in the proof 
of Theorem 8.7 when we showed that $|\call_S^c(G)|\pcom\simeq BG\pcom$ for 
an LFS(p)-group $G$. We restate the full theorem here, but it is only the 
last statement in it ($|\call_S^c(G)|\pcom\simeq BG\pcom$) that was 
affected by the error.

\begin{Thm}[{\cite[Theorem 8.7]{BLO3}}] \label{t:blo3-8.7}
Let $G$ be a locally finite group all of whose $p$-sub\-groups are discrete 
$p$-toral. Assume in addition that for each increasing sequence $A_1\le 
A_2\le\cdots$ of finite abelian $p$-subgroups of $G$, there is $k\ge1$ such 
that $C_G(A_n)=C_G(A_k)$ for all $n\ge k$. Then $G$ has a unique conjugacy 
class $\sylp{G}$ of maximal $p$-subgroups, and for each $S\in\sylp{G}$, the 
triple $(S,\calf_S(G),\call_S^c(G))$ is a $p$-local compact group with 
classifying space $|\call_S^c(G)|\pcom\simeq BG\pcom$.
\end{Thm}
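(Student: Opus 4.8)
The plan is to follow the proof of \cite[Theorem 8.7]{BLO3} essentially verbatim, the only change being that at the one step where \cite[Lemma 5.12]{BLO3} was used we invoke Theorem \ref{t:5.12} of the present paper instead. Under the standing hypotheses---$G$ locally finite, every $p$-subgroup of $G$ discrete $p$-toral, and the stabilization condition that for each increasing chain $A_1\le A_2\le\cdots$ of finite abelian $p$-subgroups one has $C_G(A_n)=C_G(A_k)$ for $n\ge k$---one first establishes the structural conclusions: $G$ has a single conjugacy class $\sylp{G}$ of maximal $p$-subgroups, each $S\in\sylp{G}$ is discrete $p$-toral, $\calf_S(G)$ is a saturated fusion system over $S$, and $\call_S^c(G)$ is a centric linking system associated to $\calf_S(G)$, so that the triple is a $p$-local compact group. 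None of this depends on \cite[Lemma 5.12]{BLO3}; the saturation axioms are verified by approximating $S$ and $\calf_S(G)$ by the finite subgroups of $G$ and their fusion systems, using the centralizer-stabilization hypothesis to control the passage to the limit, exactly as in \cite{BLO3}, so this part is taken over unchanged.

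For the homotopy equivalence $|\call_S^c(G)|\pcom\simeq BG\pcom$, write $G$ as the directed union of its finite subgroups $K\in\Fin(G)$, so that $BG\simeq\hocolim_{K\in\Fin(G)}BK$, and correspondingly identify $|\call_S^c(G)|$, after $p$-completion, with a homotopy colimit over $\Fin(G)$ (equivalently over an orbit category) of the spaces $|\call_{S\cap K}^c(K)|$ attached to the finite subgroups. For finite $K$ the theorem of Broto--Levi--Oliver extending Martino--Priddy gives $|\call_{S\cap K}^c(K)|\pcom\simeq BK\pcom$, naturally enough that these equivalences assemble into a map $|\call_S^c(G)|\pcom\to BG\pcom$. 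To see this map is an equivalence it suffices to see it is a mod $p$ cohomology isomorphism, and the obstruction is controlled by a Bousfield--Kan type spectral sequence whose relevant $E_2$-terms are higher limits over $\calo_p(G)$ of functors $F_M$, that is, the groups $\Lambda^*(G;M)$, with $M$ ranging over $\Z_{(p)}G$-modules for which $C_G(M)$ contains an element of order $p$.

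The proof is then finished by the vanishing $\Lambda^*(G;M)=0$ for all such $M$, which is exactly Theorem \ref{t:5.12} (restated as Theorem \ref{ThD}). This is the one place where the argument of \cite{BLO3} had a gap: \cite[Lemma 5.12]{BLO3} was deduced from an inverse system of injective functors whose transition maps were incorrectly asserted to be surjective---they need not be, since $\Mor_{\calo_p(G)}(Q,P_0)\to\Mor_{\calo_p(G)}(Q,P)$ need not be injective for $P_0<P$. Here the vanishing is obtained instead by reducing to the finite case through the spectral sequence of Theorem \ref{t:spseq.Lambda} together with the identification $\Lambda^*\cong\Lambda^*_f$ of Theorem \ref{t:Lb=Lbf}. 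With that in hand the obstruction groups all vanish, $|\call_S^c(G)|\pcom\to BG\pcom$ is an equivalence, and the theorem follows. Thus the entire difficulty is localized in this last step, and it has already been overcome by Theorem \ref{t:5.12}; given that result, the remaining work duplicates \cite{BLO3}.
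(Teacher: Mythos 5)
Your proposal takes essentially the same route as the paper, which likewise carries over the proof of \cite[Theorem 8.7]{BLO3} unchanged except for the single application of \cite[Lemma 5.12]{BLO3} (occurring only in the verification that $|\call_S^c(G)|\pcom\simeq BG\pcom$), now replaced by Theorem \ref{t:5.12}. The one imprecision is that the obstruction groups to be killed are $\Lambda^*(N_G(Q)/Q;H^i(BQ;\F_p))$ for the non-$p$-centric $p$-subgroups $Q\le G$ --- where $C_G(Q)Q/Q$ acts trivially on $H^i(BQ;\F_p)$ and contains an element of order $p$ precisely because $Q$ is not $p$-centric --- rather than groups $\Lambda^*(G;M)$ for the ambient group $G$; this is harmless, since Theorem \ref{t:5.12} applies equally to the locally finite group $N_G(Q)/Q$.
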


\iffalse
\begin{proof}[Correction to the proof] Within the proof of 
\cite[Theorem 8.7]{BLO3}, Lemma 5.12 was applied only in Step 2 of the 
proof that $|\call_S^c(G)|\pcom\simeq BG\pcom$. It was used there to show 
that 
	\beqq \Lambda^*(N_G(Q)/Q;H^i(BQ;\F_p)) = 0 \label{e:Lbd*(NQ/Q;Hi)} 
	\eeqq
for each $i\ge0$ and each $p$-subgroup $Q\le G$ that is not $p$-centric 
in $G$ (i.e., $Z(Q)\notin\sylp{C_G(Q)}$). The subgroup $C_G(Q)Q/Q\cong 
C_G(Q)/Z(Q)$ of $N_G(Q)/Q$ acts trivially on $H^i(BQ;\F_p)$, and 
contains an element of order $p$ since $Q$ is not $p$-centric. So 
\eqref{e:Lbd*(NQ/Q;Hi)} holds by Theorem \ref{t:5.12}.
\end{proof}
\fi

%%\newpage

\section{Examples} \label{s:examples}

We finish with a few examples that show how higher limits over orbit 
categories of locally finite groups can differ from those over orbit 
categories of finite groups. The following groups will be used whenever we 
need a more concrete example to see this. When $G_1,G_2,\dots$ is a 
sequence of groups, we write $\bigoplus_{i=1}^\infty G_i$ to mean the group 
of elements of finite support in the direct product of the $G_i$.
\beqq \parbox{145mm}
{Fix a prime $p$. Let $\F_0$ be a finite field of characteristic $p$ and 
order at least $3$, and let $\F\supseteq\F_0$ be the extension of degree 
$p$. Choose $1\ne U\le\F^\times$ such that $U\cap\F_0^\times=1$. Let 
$S\le\Aut(\F)$ be the subgroup of order $p$ (so $\Fix{S}{\F}=\F_0$). Set 
\begin{center} 
	$\renewcommand{\arraycolsep}{15pt}
	\begin{array}{r@{\,\,=\,\,}lr@{\,\,=\,\,}lr@{\,\,=\,\,}l}
	H & \displaystyle\bigoplus\nolimits_{i=1}^\infty \F^\times, &
	H_0 & \displaystyle\bigoplus\nolimits_{i=1}^\infty U \le H, &
	M & \displaystyle\bigoplus\nolimits_{i=1}^\infty \F \\[2mm]
	\Gamma & H\rtimes S, & 
	\Gamma_0 & H_0\rtimes S \le \Gamma, & 
	\Gamma_* & (U\times H)\rtimes S. 
	\end{array} $
\end{center}
Let $U$ and $H$ act on $M$ by setting $u(x_1,x_2,\dots)=(ux_1,ux_2,\dots)$ 
and $(h_1,h_2,\dots)(x_1,x_2,\cdots) = (h_1x_1,h_2x_2,\dots)$,
and let $\Gamma$ and $\Gamma_*$ act on $M$ via those actions 
and the Galois action of $S$.} \label{e:HGM} \eeqq

\iffalse
Fix a prime $p$. Let $\F_0$ be a finite field of characteristic $p$ and 
order at least $3$, and let $\F\supseteq\F_0$ be the extension of degree 
$p$. Choose $1\ne U\le\F^\times$ such that $U\cap\F_0^\times=1$. Let 
$S\le\Aut(\F)$ be the subgroup of order $p$ (so $\Fix{S}{\F}=\F_0$). Set 
	\beqq \renewcommand{\arraycolsep}{15pt}
	\begin{array}{r@{\,\,=\,\,}lr@{\,\,=\,\,}lr@{\,\,=\,\,}l}
	H & \displaystyle\bigoplus\nolimits_{i=1}^\infty \F^\times, &
	H_0 & \displaystyle\bigoplus\nolimits_{i=1}^\infty U \le H, &
	M & \displaystyle\bigoplus\nolimits_{i=1}^\infty \F \\[4mm] 
	\Gamma & H\rtimes S, & 
	\Gamma_0 & H_0\rtimes S \le \Gamma, & 
	\Gamma_* & (U\times H)\rtimes S. 
	\end{array} \label{e:HGM} \eeqq
Let $U$ and $H$ act on $M$ by setting 
	\[ u(x_1,x_2,\dots) = (ux_1,ux_2,\dots), \qquad\textup{and}\qquad
	(h_1,h_2,\dots)(x_1,x_2,\cdots) = (h_1x_1,h_2x_2,\dots), \]
and let $\Gamma$ and $\Gamma_*$ act on $M$ via those actions 
and the Galois action of $S$. 
\fi

\iffalse
As usual, when $S$ is a finite $p$-group, we let $\rk(S)$ be the rank of 
the largest elementary abelian subgroup of $S$. 
\fi

We first note the following very general vanishing result:

\begin{Lem} \label{l:Lb+rk}
Fix a prime $p$, and let $G$ be a countable locally finite group with 
finite Sylow $p$-subgroup $S\le G$ of order $p^n$ (some $n\ge0$). Then 
for each $\Z_{(p)}G$-module $M$, $\Lambda^i(G;M)=0$ for all $i\ge n+2$. 
\end{Lem}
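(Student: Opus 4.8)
The plan is to use Theorems~\ref{t:Lb=Lbf} and~\ref{t:spseq.Lambda} to push the computation down to finite subgroups of $G$, and then to quote the vanishing of $\Lambda^{*}$ above the $p$-rank for finite groups. Since $G$ is locally finite, every subgroup of $G$ — in particular every $p$-subgroup — is locally finite, so Theorem~\ref{t:Lb=Lbf} gives a natural isomorphism $\Lambda^{*}(G;M)\cong\Lambda^{*}_{f}(G;M)$. As $G$ is moreover countable, Theorem~\ref{t:spseq.Lambda} provides, for each $i\ge 0$, a short exact sequence
\[ 0 \longrightarrow \higherlim{\Fin(G)}1\bigl(\Lambda^{i-1}(-;M)\bigr) \longrightarrow \Lambda^{i}(G;M) \longrightarrow \lim_{\Fin(G)}\Lambda^{i}(-;M) \longrightarrow 0 . \]
Hence it suffices to prove that $\Lambda^{j}(H;M)=0$ for every finite subgroup $H\le G$ and every $j\ge n+1$: granting this, when $i\ge n+2$ both of the $\Fin(G)$-modules $\Lambda^{i}(-;M)$ and $\Lambda^{i-1}(-;M)$ are identically zero, so the two outer terms vanish and $\Lambda^{i}(G;M)=0$. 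It is the $\lim^{1}$ term that forces the bound $n+2$ rather than $n+1$, since $\Lambda^{n}(-;M)$ on $\Fin(G)$ need not vanish.

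For the vanishing over finite subgroups I would first bound the relevant $p$-groups. Because $S$ is a finite Sylow $p$-subgroup of the locally finite group $G$, every $p$-subgroup of $G$ is subconjugate to $S$ — this is Sylow's theorem for locally finite groups possessing a finite Sylow $p$-subgroup (see \cite{KW}). In particular, for each finite $H\le G$ a Sylow $p$-subgroup of $H$ embeds into $S$, so $\rk_{p}(H)\le\rk_{p}(S)\le n$. It then remains to invoke the classical fact that for a finite group $H$ and a $\Z_{(p)}H$-module $M$ one has $\Lambda^{j}(H;M)=0$ whenever $j>\rk_{p}(H)$ (see \cite{JMO}; see also \cite{AKO}), the hypothesis on the coefficients being essential here. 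Combined with $\rk_{p}(H)\le n$ this yields $\Lambda^{j}(H;M)=0$ for all $j\ge n+1$, as required.

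The reduction in the first paragraph is entirely routine once Theorems~\ref{t:Lb=Lbf} and~\ref{t:spseq.Lambda} are in hand, so the main obstacle is really just to quote correctly — and for the right class of coefficient modules — the two imported facts: Sylow's theorem for locally finite groups with a finite Sylow $p$-subgroup, and the $\rk_{p}$-vanishing of $\Lambda^{*}$ for finite groups. If one prefers to keep the paper self-contained, the latter can be obtained from the standard model of $\Lambda^{*}(H;-)$ as the cohomology of the cochain complex indexed by strictly increasing chains $1=P_{0}<P_{1}<\cdots$ of $p$-subgroups of $H$, which is concentrated in degrees at most $\log_{p}|H|_{p}$; and Proposition~\ref{p:lim*(OpG)}(a) disposes of the subcase $O_{p}(H)\ne 1$ along the way.
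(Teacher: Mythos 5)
Your proof is correct and follows essentially the same route as the paper's: reduce via Theorems \ref{t:Lb=Lbf} and \ref{t:spseq.Lambda} to showing $\Lambda^j(H;M)=0$ for every finite $H\le G$ and every $j\ge n+1$, and deduce that from the standard vanishing bound for finite groups. The only cosmetic differences are that you quote the sharper $\rk_p$-bound from \cite{JMO} where the paper uses the bound in terms of the order of a Sylow $p$-subgroup from \cite[Lemma III.5.27]{AKO}, and that the subconjugacy of finite $p$-subgroups into $S$ needs no external reference: it is immediate from local finiteness, since $\gen{P,S}$ is finite and $S$, being a maximal $p$-subgroup of $G$, is Sylow in it.
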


\begin{proof} For each $K\in\Fin(G)$, and each $i\ge1$ such that 
$\Lambda^i(K;M)\ne0$, we have $i\le n$ by \cite[Lemma III.5.27]{AKO}.  
So for each $i\ge n+2$, $\Lambda^i(K;M)\cong 
\Lambda^{i-1}(K;M)=0$ for all $K\in\Fin(G)$, and hence $\Lambda^i(G;M)= 
\Lambda^i_f(G;M)=0$ by Theorem \ref{t:spseq.Lambda}. 
\end{proof}

\iffalse
So by Theorem \ref{t:spseq.Lambda}, $\Lambda^n(G;M)=0$ if 
$\Lambda^{n-1}(K;M)=0$ for all $K\in\Fin(G)$, which is the case whenever 
$n\ge\rk(S)+2$. 
\end{proof}
\fi

By a result of Jackowski and McClure (see \cite[Proposition 5.2]{JMO}), for 
any finite group $G$ and any $\Z_{(p)}G$-module $M$, the functor 
$P\mapsto\Fix{P}M$ is acyclic. The following example, an application of 
the spectral sequence of Proposition \ref{p:spseq.Phi}, shows that this 
does not hold in general for locally finite groups. 

\begin{Ex} \label{ex:P->M^P}
Fix a prime $p$, let $G$ be a locally finite group, and let $M$ be a 
(left) $\Z_{(p)}G$-module. Consider the $\Z_{(p)}\calo_p^f(G)$-module 
	\[ \Phi^G_M \: \calo_p^f(G)\op \Right4{} \Z_{(p)}\mod \]
defined by setting $\Phi^G_M(P)=\Fix{P}M$ (the $P$-invariant elements of $M$) for 
each finite $p$-subgroup $P\le G$, and by sending a morphism $P\xto{[g]}Q$ 
to the composite 
	\[ \Fix{Q}M \Right4{\incl} \Fix{\9gP}M \Right4{g^{-1}} \Fix{P}M. \]
Then 
	\begin{align} 
	\higherlim{\calo_p^f(G)}i&(\Phi^G_M) \cong 
	\higherlim{\Fin(G)}i(K\mapsto\Fix{K}M) \label{e:P->M^P-1} \\
	&\cong \begin{cases} \Fix{G}M & \textup{if $i=0$} \\[2pt] 
	\Coker\bigl[M\too \bigl(\lim\limits_{\Fin(G)}(M/\Fix{-}M)\bigr)\bigr] 
	& \textup{if $i=1$} \\
%%	0 & \textup{if $i\ge2$ and $G$ is countable.} 
	0 & \dbl{\textup{if $|G|\le\aleph_m$ and $i\ge 
	m+2$}}{\textup{(some $m\ge0$).}}
	\end{cases} \label{e:P->M^P-2}
	\end{align}
\end{Ex}

\begin{proof} By \cite[Proposition 5.2 and Corollary 1.8]{JMO}, for each 
$K\in\Fin(G)$, we have that $\lim^0(\Phi^G_M|_{\calo_p(K)})\cong \Fix{K}M$, 
while $\lim^i(\Phi^G_M|_{\calo_p(K)})=0$ for all $i>0$. So the spectral 
sequence of Proposition \ref{p:spseq.Phi} collapses, and \eqref{e:P->M^P-1} 
follows directly from that. If $|G|\le\aleph_m$ where $0\le m<\infty$, then 
$|\Fin(G)|\le\aleph_m$, and so $\lim_{\Fin(G)}^i(\Psi)=0$ for each functor 
$\Psi$ and each $i\ge m+2$ by a theorem of Goblot \cite[Proposition 
2]{Goblot} (see also \cite[Th\'eor\`eme 3.1]{Jensen}). 

\iffalse
If $G$ is countable, then so is $\Fin(G)$, and hence 
$\lim_{\Fin(G)}^i(\Fix{-}M)=0$ for $i\ge2$. 
\fi

The constant functor on $\Fin(G)$ with value $M$ is isomorphic to $\I_1^M$, 
and hence is acyclic by Proposition \ref{p:I_c^A}(c). So the 
short exact sequence 
	\[ 0\too \Fix{-}M\too M\too M/\Fix{-}M\too0 \]
of $\Z_{(p)}\Fin(G)$-modules induces an exact sequence 
	\begin{multline*} 
	0 \too \higherlim{\Fin(G)}0(\Fix{-}M) \Right2{} M 
	\Right2{} \higherlim{\Fin(G)}0(M/\Fix{-}M) \Right2{} 
	\\ \higherlim{\Fin(G)}1(\Fix{-}M) \too 0, 
	\end{multline*}
and $\lim_{\Fin(G)}^i(\Fix{-}M)$ is as described in \eqref{e:P->M^P-2} for 
$i=0,1$. 
\end{proof}

For example, if we return to the pair $(\Gamma,M)$ defined in 
\eqref{e:HGM}, we get 
	\[ \higherlim{\calo_p(\Gamma)}0(\Phi^\Gamma_M)=0 \qquad\textup{and}\qquad
	\higherlim{\calo_p(\Gamma)}1(\Phi^\Gamma_M)\cong 
	\Bigl(\prod\nolimits_{i=1}^\infty\F_0\Bigr)\Big/
	\Bigl(\bigoplus\nolimits_{i=1}^\infty \F_0\Bigr) \ne0, \]
Hence $\Phi^\Gamma_M$ is not acyclic as an $\calo_p(\Gamma)$-module.
(Note that $\calo_p^f(\Gamma)=\calo_p(\Gamma)$ in this case.)

In the situation of Example \ref{ex:P->M^P}, if $G$ has no elements of 
order $p$, then $\calo_p(G)=\calo_p^f(G)$ has only one object, so an 
$\calo_p(G)$-module is the same as a $\Z G$-module, and 
$\Lambda^*(G;M)\cong H^*(G;M)$ for each such module $M$. This situation has 
been studied by Holt, who showed in \cite[Theorem 1]{Holt1} that for each 
locally finite group $G$ with no $p$-torsion satisfying a certain condition 
($*$) (which is always satisfied if $G$ is abelian), if $|G|=\aleph_n$ for 
some $n\ge0$, then $H^{n+1}(G;M)\ne0$ for some $\F_pG$-module $M$. In 
particular, in the situation of Example \ref{ex:P->M^P}, this shows that 
$\lim_{\calo_p^f(G)}^n(\Phi^G_M)$ can be nonzero for arbitrarily large $n$, 
while in the situation of Proposition \ref{p:spseq.Phi}, $E_2^{ij}$ can be 
nonzero for arbitrarily large $i$. 

The pair $(H,M)$ in \eqref{e:HGM} gives an example in the countable case of 
the type studied by Holt: Example \ref{ex:P->M^P} implies that 
	\beqq H^1(H;M) \cong \Lambda^1(H;M) \cong 
	\Bigl(\prod\nolimits_{i=1}^\infty\F\Bigr)\Big/
	\Bigl(\bigoplus\nolimits_{i=1}^\infty \F\Bigr) \ne0 . 
	\label{e:H1(H;M)} \eeqq

Many of the basic properties of the functors $\Lambda^*(G;M)$ for finite 
$G$ were listed in \cite[Propositions 6.1--6.2]{JMO}. For example, 
\cite[Proposition 6.2(i)]{JMO} states that if $G$ is a finite group, $M$ is 
a $\Z_{(p)}G$-module, and $S\in\sylp{G}$ has order $p$, then 
	\[ \Lambda^1(G;M)\cong\Fix{N_G(S)}M/\Fix{G}M, \]
and $\Lambda^i(G;M)=0$ for $i\ne1$. Neither of these holds in general when 
$G$ is locally finite and infinite.

\begin{Ex} \label{ex:|S|=p}
Fix a prime $p$. Let $G$ be a countable locally finite group that contains 
a maximal $p$-subgroup $S\le G$ of order $p$, and let $\Fin_0(G)$ be the 
poset of finite subgroups of $G$ that contain $S$. Then for each 
$\Z_{(p)}G$-module $M$, we have 
	\begin{align} 
	\Lambda^1(G;M) &\cong \lim_{\Fin_0(G)}\Bigl(K\mapsto 
	\tfrac{\Fix{N_{K}(S)}M}{\Fix{K}M)} \Bigr) \label{e1:|S|=p} \\
	\Lambda^2(G;M) &\cong \Coker\Bigl[ 
	\lim_{\Fin_0(G)}\Bigl(K\mapsto\tfrac{\Fix{S}M}{\Fix{K}M}\Bigr) 
	\Right3{} 
	\lim_{\Fin_0(G)}\Bigl(K\mapsto\tfrac{\Fix{S}M}{\Fix{N_K(S)}M}\Bigr) 
	\Bigr] \label{e2:|S|=p} \end{align}
and $\Lambda^i(G;M)=0$ for all $i\ne1,2$. 
\end{Ex}

\begin{proof} If $P\le G$ is a finite $p$-subgroup, then $\gen{S,P}$ is 
finite, and $S\in\sylp{\gen{S,P}}$ since it is a maximal $p$-subgroup of 
$G$. Thus $|P|\le p$. In particular, every $p$-subgroup of $G$ is finite, 
so $\Lambda^*(G;M)=\Lambda^*_f(G;M)$.

For each $K\in\Fin_0(G)$, we have $S\in\sylp{K}$ since it is a maximal 
$p$-subgroup. So by \cite[Proposition 6.2(i)]{JMO}, $\Lambda^1(K;M)\cong 
\Fix{N_K(S)}M/\Fix{K}M$, and $\Lambda^i(K;M)=0$ for all $i\ne1$. Theorem 
\ref{t:spseq.Lambda} now says that 
	\[ \Lambda^i(G;M)\cong 
	\higherlim{K\in\Fin_0(G)}{i-1}\hskip-4mm\bigl(\Fix{N_K(S)}M/\Fix{K}M
	\bigr) \] 
when $i=1,2$, while $\Lambda^i(G;M)=0$ for $i\ne1,2$. This proves 
\eqref{e1:|S|=p} and the last statement.

Upon applying limits to the short exact sequence 
	\[ 0 \Right2{} \Fix{K}M \Right3{} \Fix{N_K(S)}M \Right3{} 
	\Fix{N_K(S)}M/\Fix{K}M \Right2{} 0 \]
of $\Fin_0(G)$-modules, we get an exact sequence 
	\beqq \higherlim{K\in\Fin_0(G)}1(\Fix{K}M) 
	\Right2{} \higherlim{K\in\Fin_0(G)}1(\Fix{N_K(S)}M) \Right2{} 
	\Lambda^2(G;M) \Right2{} 0 \label{e3:|S|=p} \eeqq
(recall that $\Fin(G)$ is a countable directed poset). For 
$T=1$ or $T=S$, the extension 
	\[ 0\to \Fix{N_K(T)}M\to \Fix{S}M\to \Fix{S}M/\Fix{N_K(T)}M\to 0 \] 
of $\Fin_0(G)$-modules induces an isomorphism 
	\beqq \higherlim{K\in\Fin_0(G)}1(\Fix{N_K(T)}M) \cong \Coker \Bigl[ 
	\Fix{S}M \Right2{} \lim_{K\in\Fin_0(G)} 
	\Bigl(\tfrac{\Fix{S}M}{\Fix{N_K(T)}M}\Bigr)
	\Bigr] \label{e4:|S|=p} \eeqq
and \eqref{e2:|S|=p} follows from \eqref{e3:|S|=p} and 
\eqref{e4:|S|=p}.
\end{proof}

Upon returning to the groups and modules defined in 
\eqref{e:HGM}, formulas \eqref{e1:|S|=p} and \eqref{e2:|S|=p} imply 
	\beqq \Lambda^1(\Gamma_0;M) \cong \prod\nolimits_{i=1}^\infty \F_0 
	\qquad\textup{and}\qquad 
	\Lambda^2(\Gamma_*;M) \cong \Bigl(\prod\nolimits_{i=1}^\infty \F_0\Bigr) 
	\Big/ \Bigl(\bigoplus\nolimits_{i=1}^\infty \F_0\Bigr) \ne0 . 
	\label{e5:|S|=p} \eeqq
In contrast, $\Fix{N_{\Gamma_0}(S)}M/\Fix{\Gamma_0}M\cong 
\bigoplus_{i=1}^\infty \F_0$. So when $\Gamma$ is locally finite with Sylow 
$p$-subgroups of order $p$, then $\Lambda^1(\Gamma;M)$ and 
$\Lambda^2(\Gamma;M)$ can both be larger than would be predicted by the 
formulas for finite $\Gamma$.

When $G$ is a product of two locally finite groups, Theorem 
\ref{t:Lb.sp.seq} takes the form:

\begin{Prop} \label{p:GxH}
Let $G_1$ and $G_2$ be locally finite groups, and let $M$ be a $\Z[G_1\times 
G_2]$-module. Then there is a first quarter spectral sequence 
	\[ E_2^{ij} = \Lambda^i(G_1;\Lambda^j(G_2;M)) \Longrightarrow 
	\Lambda^{i+j}(G_1\times G_2;M). \]
Here, $G_1$ acts on $\Lambda^*(G_2;M)$ via its action on $M$. 
\end{Prop}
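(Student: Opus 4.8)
The plan is to obtain this as the special case of Theorem~\ref{t:Lb.sp.seq} in which $G=G_1\times G_2$ and $H=1\times G_2\cong G_2$. First I would check the hypotheses: $G_1\times G_2$ is locally finite, since a finitely generated subgroup surjects onto finitely generated---hence finite---subgroups of each factor and so is itself finite, and $H$ is normal in $G$ with $G/H\cong G_1$. Theorem~\ref{t:Lb.sp.seq} then gives a first quarter spectral sequence
\[ E_2^{ij}=\higherlim{\calo_p(G_1)}i\bigl(\Psi^j\bigr)\Longrightarrow\Lambda^{i+j}(G_1\times G_2;M), \]
where, since the subgroups of $G$ containing $1\times G_2$ are exactly the subgroups $Q\times G_2$ for $Q\le G_1$ and $(Q\times G_2)/(1\times G_2)\cong Q$, the functor $\Psi^j\:\calo_p(G_1)\op\too\Ab$ sends a $p$-subgroup $Q\le G_1$ to $\Lambda^j(Q\times G_2;M)$.

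The heart of the proof is to identify $\Psi^j$ with $F_{\Lambda^j(G_2;M)}$ as a $\calo_p(G_1)$-module. For $Q\neq1$, the subgroup $Q\times1$ is a nontrivial normal $p$-subgroup of $Q\times G_2$, so $O_p(Q\times G_2)\neq1$ and hence $\Psi^j(Q)=\Lambda^j(Q\times G_2;M)=0$ by Proposition~\ref{p:lim*(OpG)}(a); for $Q=1$ we have $\Psi^j(1)=\Lambda^j(G_2;M)$. Since a $\calo_p(G_1)$-module vanishing on all nontrivial subgroups is determined up to isomorphism by its value at the trivial subgroup together with the action of $\Aut_{\calo_p(G_1)}(1)\cong G_1$ on it, all that remains is to check that this $G_1$-action on $\Lambda^j(G_2;M)$ is the one induced by the action of $G_1$ on $M$ (each element of $G_1$ centralizes $1\times G_2$ in $G$, hence acts $\Z G_2$-linearly on $M$ and so induces an automorphism of the $\calo_p(G_2)$-module $F_M$, and thereby of $\Lambda^j(G_2;M)$).

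This last identification is the step I expect to require the most care, and I would carry it out with Theorem~\ref{t:G/H}(b), applied to $1\to H\to G\to G/H\to1$ with $X$, $Y$ the sets of all $p$-subgroups of $G$, $G/H$: an element $\4g\in G_1\cong\Aut_{\calo_p(G_1)}(1)$ acts on $\Psi^j(1)=\higherlim{\calo_p(G_2)}j(F_M)$ as the map induced by the conjugation functor $c_g\:\calo_p(G_2)\too\calo_p(G_2)$ ($L\mapsto\9gL$) together with the natural transformation $F_M\circ c_g\too F_M$ whose component at $L$ is $F_M([g])$, where $g\in G$ is any lift of $\4g$. Choosing the lift $g=(\4g,1)$, which centralizes $1\times G_2$, makes $c_g$ the identity functor and makes this natural transformation equal to multiplication by $\4g$ on $F_M(1)=M$ (and the zero map on every nontrivial subgroup)---that is, the endomorphism of $F_M$ induced by the $\Z G_2$-automorphism $\4g\cdot(-)$ of $M$. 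Hence $\4g$ acts on $\Lambda^j(G_2;M)$ exactly via its action on $M$, so $\Psi^j\cong F_{\Lambda^j(G_2;M)}$, and substituting into the spectral sequence yields $E_2^{ij}\cong\higherlim{\calo_p(G_1)}i\bigl(F_{\Lambda^j(G_2;M)}\bigr)=\Lambda^i(G_1;\Lambda^j(G_2;M))$, as required.
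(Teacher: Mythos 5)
Your proposal is correct and follows essentially the same route as the paper: apply Theorem \ref{t:Lb.sp.seq} to $1\times G_2\nsg G_1\times G_2$, kill the values at nontrivial $Q\le G_1$ via Proposition \ref{p:lim*(OpG)}(a) applied to $O_p(Q\times G_2)\ge Q\times 1$, and identify the resulting functor with $F_{\Lambda^j(G_2;M)}$. Your explicit verification of the $G_1$-action on $\Lambda^j(G_2;M)$ via Theorem \ref{t:G/H}(b) is a detail the paper leaves implicit, but it is the right check and adds nothing beyond the paper's argument in substance.
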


\begin{proof} By Theorem \ref{t:Lb.sp.seq}, there is a spectral sequence 
converging to $\Lambda^*(G_1\times G_2;M)$ with 
$E_2^{ij}\cong\lim_{\calo_p(G_1)}^i(\Psi^j)$, where for a $p$-subgroup $P\le 
G_1$, we have $\Psi^j(P)=\Lambda^j(P\times G_2;M)$. When $P\ne1$, we have 
$O_p(P\times G_2)\ne1$, and so $\Lambda^j(P\times G_2;M)=0$ by Proposition 
\ref{p:lim*(OpG)}(a). So $\Psi^j=F_{\Lambda^j(G_2;M)}$, and $E_2^{ij}$ is as 
described above.
\end{proof}

By \cite[Proposition 6.1(v)]{JMO}, when $H$ and $K$ are finite groups, $M$ 
is an $\F_pH$-module, and $N$ is an $\F_pK$-module, then $\Lambda^*(H\times 
K;M\otimes_{\F_p}N)$ is isomorphic to the tensor product of 
$\Lambda^*(H;M)$ with $\Lambda^*(K;N)$. (A more general K\"unneth formula 
is given there.) This is not in general the case for products of locally finite 
groups, since tensor products don't in general commute with taking limits 
(and also since higher limits are involved). For example, 
if $(\Gamma_*,M)$ and $(H_0,M)$ are as in \eqref{e:HGM}, then by Lemma 
\ref{l:Lb+rk}, \eqref{e:H1(H;M)}, and \eqref{e5:|S|=p}, 
	\beq \begin{split} 
	\Lambda^3(\Gamma_*\times H;M\otimes_{\F_p}M)=0 \qquad 
	&\textup{while}\qquad 
	\Lambda^2(\Gamma_*;M)\otimes_{\F_p}\Lambda^1(H;M)\ne0 \\
	\Lambda^4(\Gamma_*\times \Gamma_*;M\otimes_{\F_p}M)=0 \qquad 
	&\textup{while}\qquad 
	\Lambda^2(\Gamma_*;M)\otimes_{\F_p}\Lambda^2(\Gamma_*;M)\ne0 .
	\end{split} \eeq

%%\newpage

\appendix

\section{Locally finite \texorpdfstring{$p$}{p}-groups}
\label{s:appx}

In the proof of Proposition \ref{p:lim*=Lbda}, we needed to know, for 
certain pairs $Q<P$ of $p$-groups, that $Q$ is strictly contained in its 
normalizer $N_P(Q)$. This motivates the following lemma, which gives 
conditions under which this holds, and also motivates Example 
\ref{ex:N(Q)=Q} where we show that it is not always the case, not even 
for locally finite $p$-groups.

\begin{Lem} \label{l:Q<NP(Q)}
Let $P$ be a $p$-group, and let $Q<P$ be a proper subgroup. Assume either 
\begin{enumi} 
\item $Q$ has finite index in $P$, or 
\item $P$ contains an abelian subgroup of finite index. 
\end{enumi}
Then $Q<N_P(Q)$.
\end{Lem}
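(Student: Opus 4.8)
The plan is to prove the two cases separately, since they require genuinely different arguments.

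\textbf{Case (i): $Q$ has finite index in $P$.} Here I would induct on the index $[P:Q]$. If $[P:Q]=p$ then $Q\nsg P$ (any subgroup of index $p$ in a $p$-group is normal — the standard argument: $P$ acts on the $p$ cosets of $Q$, giving a homomorphism to a symmetric group whose image is a $p$-group of order dividing $p!$, hence cyclic of order $p$, and $Q$ is the kernel), so $N_P(Q)=P>Q$. For the inductive step, suppose $[P:Q]=p^k$ with $k\ge 2$. I would pick any subgroup $R$ with $Q<R<P$ and $[R:Q]=p$ (such $R$ exists by the theory of finite $p$-groups applied to $P/Q$-cosets — actually one must be a bit careful since $P$ may be infinite, but $Q$ being of finite index means $P/\mathrm{core}_P(Q)$ is a finite $p$-group, and one can find an intermediate subgroup there). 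Then $Q\nsg R$, so $R\le N_P(Q)$, and since $R>Q$ we are done immediately. In fact this shows the result for Case (i) is almost trivial: any subgroup of index $p$ in a $p$-group is normal, so one just needs a subgroup strictly between $Q$ and $P$ of index $p$ over $Q$, which exists whenever $[P:Q]$ is finite and $>1$.

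\textbf{Case (ii): $P$ contains an abelian subgroup $A$ of finite index, $Q<P$ arbitrary.} This is the main obstacle. The idea is to reduce to a situation where Case (i) or a direct argument applies. Consider $A\cap Q \le A$; since $A$ is abelian, $A$ normalizes $A\cap Q$. Also $Q$ normalizes $A\cap Q$ (it's $Q\cap A$). So $\langle A, Q\rangle \le N_P(A\cap Q)$. If $\langle A,Q\rangle > Q$, then already $N_P(A\cap Q)\supseteq\langle A,Q\rangle$ doesn't immediately help unless $A\cap Q = Q$, i.e. $Q\le A$. So split further: if $Q\le A$, then $A$ is abelian and normalizes $Q$, and either $A>Q$ giving $N_P(Q)\ge A>Q$, or $A=Q$ in which case $Q=A$ has finite index in $P$ and Case (i) finishes it. If $Q\not\le A$, then $QA/A$ is a nontrivial $p$-subgroup of the finite $p$-group $P/A$; but that alone doesn't control $N_P(Q)$. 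Instead I would argue: $AQ$ is a subgroup (since $A\nsg P$... wait, $A$ need not be normal).

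Let me restructure Case (ii) more carefully. Replace $A$ by its normal core $A_0=\mathrm{core}_P(A)=\bigcap_{g\in P}{}^gA$; this is still abelian (subgroup of $A$), still of finite index in $P$ (intersection of finitely many finite-index subgroups — here one uses that $A$ has finite index so it has only finitely many conjugates), and now $A_0\nsg P$. Now $P/A_0$ is a finite $p$-group. If $Q\not\supseteq A_0$: consider $QA_0$. Since $A_0\nsg P$, $QA_0$ is a subgroup, and $QA_0 > Q$ because $A_0\not\le Q$. Does $QA_0$ normalize $Q$? Not necessarily. Hmm — so instead look at the image $\bar Q = QA_0/A_0$ in $P/A_0$: this is a proper subgroup of the finite $p$-group $P/A_0$ (proper because... actually $\bar Q$ could equal $P/A_0$ if $QA_0=P$). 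So handle two subcases. If $QA_0\ne P$: then $\bar Q < P/A_0$, so by Case (i) applied in $P/A_0$ we get $\bar Q < N_{P/A_0}(\bar Q)$; lifting, there is $g\in P$ with $g\notin QA_0$ but ${}^g(QA_0)=QA_0$. That normalizes $QA_0$, not $Q$ — still not quite it.

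I think the cleanest route for Case (ii): among all subgroups $B\le P$ with $B$ abelian (or just $B\le A$) and $Q\cap B$ of finite index in $B$... Actually, the truly clean approach: since $[P:A]<\infty$, $[Q:Q\cap A]\le[P:A]<\infty$, so $Q\cap A$ has finite index in $Q$, and $Q\cap A\le A$ is normalized by $A$ (abelian) and by $Q$, hence by $N:=\langle A,Q\rangle$. Set $D=Q\cap A$. In $N/D$, the image $\bar A$ is abelian, the image $\bar Q$ is finite (since $D$ has finite index in $Q$), and $\bar A\cap\bar Q=1$. If $\bar Q=1$ then $Q\le D\le A$, reducing to the earlier easy subcase. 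Otherwise $\bar Q\ne 1$ is a finite $p$-group; take a central element $\bar z\ne 1$ of $\bar Q$. Then $\bar z$ centralizes $\bar Q$; and I want $\bar z$ to normalize $\bar Q$ while not lying in $\bar Q$ — but $\bar z\in\bar Q$. This still isn't landing. Given the difficulty, I expect the actual proof uses the structure more delicately — likely: pass to a finite subgroup. Since $Q<P$, pick $x\in P\setminus Q$; let $P_0=\langle A\cap\langle x, (A\cap Q\text{-generators})\rangle\ldots\rangle$ — too vague.

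\textbf{Summary of the plan.} For Case (i), use that index-$p$ subgroups of $p$-groups are normal plus induction on the index. For Case (ii), the strategy is to use the normal abelian finite-index core $A_0\nsg P$, reduce modulo $A_0$ to the finite $p$-group $P/A_0$ where Case (i)-type facts hold, and separately handle the subgroup $A_0\cap Q$ which is normalized by both the abelian $A_0$ and by $Q$; combining a normalizing element coming from $P/A_0$ with the action on $A_0\cap Q$ should produce an element of $N_P(Q)\setminus Q$. The hard part — and the step I expect to be the main obstacle — is precisely this last combination: an element normalizing $QA_0$ and the structure on $A_0\cap Q$ need not normalize $Q$ itself, so one needs an additional argument (probably: replace $Q$ by a maximal element of its conjugacy-under-$A_0$ class, or use that $A_0$ abelian forces $[A_0,Q]\le A_0\cap(\text{small})$) to upgrade "normalizes the product" to "normalizes $Q$." I would allocate most of the writing effort there.
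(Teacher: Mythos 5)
Your Case (i) is essentially correct and is the same reduction the paper makes: pass to the normal core $N=\mathrm{core}_P(Q)$ (equivalently the kernel of the $P$-action on $P/Q$), note that $P/N$ is a finite $p$-group, and invoke the normalizer condition there. Your detour through an intermediate subgroup $R$ with $[R:Q]=p$ is unnecessary and slightly circular as phrased (the existence of such an $R$ with $Q\trianglelefteq R$ is usually itself deduced from the normalizer condition), but since you correctly reduce to a finite $p$-group, simply quoting $\bar Q<N_{\bar P}(\bar Q)$ there and pulling back finishes it. No real problem here.

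Case (ii), however, has a genuine gap: none of your attempted arguments closes, and you say so yourself. The difficulty you keep running into is structural: you are trying to manufacture an element of $N_P(Q)\setminus Q$ from data in the quotient $P/A_0$, but an element normalizing $QA_0$ need not normalize $Q$, and no amount of massaging the pair $(QA_0,\,A_0\cap Q)$ fixes that. The missing idea is to go the other way: instead of finding a normalizer directly, \emph{enlarge $Q$ to a group in which it has finite index} and then quote Case (i). Concretely: replace $A$ by a normal abelian subgroup of finite index (its core, via your own $C_P(P/A)$ observation). If $A\le Q$ then $[P:Q]<\infty$ and Case (i) applies. Otherwise pick $a\in A\setminus Q$ and let $\bar a$ denote its $P$-conjugacy class. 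Since $A\le C_P(a)$ and $[P:A]<\infty$, the set $\bar a$ is finite, and it lies in $A$ because $A\trianglelefteq P$; hence $\gen{\bar a}$ is a finitely generated abelian torsion group, i.e.\ a \emph{finite} normal subgroup of $P$ not contained in $Q$. Then $Q\gen{\bar a}$ is a subgroup properly containing $Q$ with $[Q\gen{\bar a}:Q]\le|\gen{\bar a}|<\infty$, and Case (i) applied inside $Q\gen{\bar a}$ gives $Q<N_{Q\gen{\bar a}}(Q)\le N_P(Q)$. This is the step your plan is missing, and without it the proof of (ii) is incomplete.
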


\begin{proof} We first recall the well known fact that 
	\beqq \textup{$H\le G$ groups and $[G:H]<\infty$ $\implies$
	$\exists\,N\nsg G$ with $N\le H$ and $[G:N]<\infty$.}
	\label{e:Q<NP(Q)} \eeqq
This follows upon setting $N=C_{G}(G/H)$, where $G$ acts on the finite set 
$G/H$ via translation of cosets.

Assume first that (i) holds. By \eqref{e:Q<NP(Q)}, there is $N\nsg P$ 
of finite index and contained in $Q$. Then $P/N$ is a finite $p$-group and 
$Q/N<P/N$ is a proper subgroup, so $Q/N<N_{P/N}(Q/N)=N_P(Q)/N$. Thus 
$Q<N_P(Q)$.

Now assume (ii) holds, and let $A\le P$ be an abelian subgroup of finite 
index. By \eqref{e:Q<NP(Q)} again, we can assume that $A\nsg P$. If $Q\ge 
A$, then $[P:Q]<\infty$, and so $Q<N_P(Q)$ by case (i). So assume $Q\ngeq 
A$, choose $a\in A\sminus Q$, and let $\5a$ be its conjugacy class in $P$: 
a finite subset of $A$ since $P/A$ is finite and $A$ is abelian. Then 
$\gen{\5a}$ is a finite normal subgroup of $P$, so $Q$ has finite index in 
$Q\gen{\5a}$, and hence $Q< N_{Q\gen{\5a}}(Q)\le N_P(Q)$ by case (i). 
\end{proof}

The following example shows why some conditions are needed to ensure that 
$N_P(Q)>Q$ for a pair of $p$-groups $P>Q$.

\begin{Ex} \label{ex:N(Q)=Q}
Define inductively a sequence of $p$-groups $P_0,P_1,P_2,\cdots$ with 
subgroups $A_n,Q_n\le P_n$ as follows:
	\begin{align*} 
	P_0&=C_p & Q_0&=1\le P_0 & A_0&=P_0 \\
	P_{n+1}&=P_{n}\wr C_p & Q_{n+1}&=Q_{n}\wr C_p\le P_{n+1} & 
	A_{n+1}&=(A_n)^{\times p} \nsg P_{n+1}. 
	\end{align*}
Here, $(-)^{\times p}$ means the $p$-fold direct product. 
Thus for each $n$, $A_n$ is elementary abelian of rank $p^n$, $P_n$ is an 
$(n+1)$-times iterated wreath product $C_p\wr\cdots\wr C_p$, and 
$A_nQ_n=P_n$ and $A_n\cap Q_n=1$. 

Define injective homomorphisms $\varphi_n\:P_n\too P_{n+1}$ by setting 
	\[ \varphi_n(g) = \begin{cases} 
	(g,1,\dots,1) \in (P_n)^{\times p} \nsg P_{n+1} & 
	\textup{if $n$ is odd} \\
	(g,g,\dots,g) \in (P_n)^{\times p} \nsg P_{n+1} & 
	\textup{if $n$ is even.} 
	\end{cases} \]
Set 
	\[ P = \colim\nolimits_n(P_n,\varphi_n), \qquad
	Q = \colim\nolimits_n(Q_n,\varphi_n|_{Q_n}), 
	\qquad\textup{and}\qquad
	A = \colim\nolimits_n(A_n,\varphi_n|_{A_n}). \] 
Then $P$ is a locally finite $p$-group, and $A,Q\le P$ have 
the following properties:
\begin{enuma} 

\item $Z(P)=1$ and $P$ has no finite nontrivial normal subgroups; 

\item $[P,P]=P$ and $P$ has no proper subgroups of finite index; 

\item $A\nsg P$, $A\cap Q=1$, and $AQ=P$; and 

\item $N_P(Q)=Q$.

\end{enuma}
\end{Ex}

\begin{proof} Let $\psi_n\:P_n\too P$ be the natural map: an injective 
homomorphism since the $\varphi_n$ are all injective. Set 
$P_n^*=\psi_n(P_n)<P$ for short. Thus $P_n^*\cong P_n$, and $P$ is the 
union of the increasing sequence $P_0^*<P_1^*<P_2^*<\cdots$ of finite 
$p$-groups. In particular, $P$ is a locally finite $p$-group. 

\smallskip

\noindent\textbf{(a) } If $z\in Z(P)$, then $z\in P_n^*$ for some $n$, and 
so $z\in Z(P_m^*)$ for all $m\ge n$. Since $|Z(P_m)|=p$ for each $m$, and 
$Z(P_{m+1})\cap \varphi_m(Z(P_m))=1$ if $m$ is odd, we get that $Z(P)=1$. 

Assume $1\ne N\nsg P$ is a finite nontrivial normal subgroup. Then 
$1\ne Z(N)\nsg P$, and $\Aut_P(Z(N))$ is a 
finite $p$-group of automorphisms of $Z(N)$. So 
$1\ne C_{Z(N)}(\Aut_P(Z(N))) \le Z(P)$, which we just saw is impossible.

\smallskip

\noindent\textbf{(b) } When $n$ is even, the subgroup 
$\varphi_n(P_n)<P_{n+1}$ is generated by elements of the form 
$(g,g,\dots,g)$ for $g\in P_n$ of order $p$. For such $g$, 
$(1,g,g^2,\dots,g^{p-1})$ is conjugate in $P_{n+1}$ to 
$(g,g^2,g^3,\dots,g^p)$, so $(g,g,\dots,g)\in[P_{n+1},P_{n+1}]$, and hence 
$P_n^*\le[P,P]$. Thus $P=[P,P]$. 

If $R<P$ is a proper subgroup of finite index, then it contains a normal 
subgroup $N\nsg P$ of finite index by \eqref{e:Q<NP(Q)}, which is 
impossible since that would imply $[P/N,P/N]=[P,P]/N=P/N$ when $P/N$ is a 
nontrivial finite $p$-group. 

\smallskip

\noindent\textbf{(c) } We have $A\cap Q=\bigcup_{n=0}^\infty\psi_n(A_n\cap 
Q_n)=1$ and $AQ\ge\bigcup_{n=0}^\infty\psi_n(A_nQ_n)=P$. For each $g\in P$, 
let $n$ be such that $g\in P_n^*$: then $g$ normalizes $\psi_m(A_m)$ for each 
$m\ge n$ and hence $g$ normalizes $A$. 

\smallskip

\noindent\textbf{(d) } Set $B=N_P(Q)\cap A$. Then $[Q,B]\le Q\cap A=1$, and 
hence $B\le C_P(AQ)=Z(P)$ since $P=AQ$ and $A\ge B$ is abelian. So $B=1$ by (a). 

Each $g\in N_P(Q)$ has the form $g=ah$ for $a\in A$ and $h\in Q$ by (b), 
and $a\in N_P(Q)\cap A=B$. So $N_P(Q)=QB=Q$. 
\end{proof}

%%%%%%%%%%%%%%%%%%%%%%%%%%%%%%%%%%%%%%%%

\end{document}

Unfortunately, Lemma \ref{l:5.12f} does not suffice to replace 
\cite[Lemma 5.12]{BLO3} where we applied it, since we don't know 
whether or not $\Lambda^*(G;M)\cong\Lambda^*_f(G;M)$ in all cases. The 
closest we can come to proving that is the following proposition. 

\begin{Prop} \label{p:G/CGM}
Fix a prime $p$, let $G$ be a locally finite group, and let $M$ be an 
$\F_pG$-module. Assume that all $p$-subgroups of $G/C_G(M)$ are finite. 
Then the natural homomorphism $\Lambda^*(G;M)\too\Lambda^*_f(G;M)$ is 
an isomorphism. 
\end{Prop}

\begin{proof} We first prove this when $G$ acts trivially on $M$, then 
when $G/C_G(M)$ is a finite $p$-group, and finally in the general case.

\smallskip

\noindent\textbf{Case 1:} Assume that $G$ acts trivially on $M$; 
i.e., that $C_G(M)=G$. We claim that 
	\begin{align*} 
	\Lambda^*(G;M) &\cong H^*(|\calo_X(G)|,|\calo_Y(G)|;M) \\
	&\cong H^*(|\calo_{X_0}(G)|,|\calo_{Y_0}(G)|;M) \cong 
	\Lambda^*_f(G;M). 
	\end{align*}
The first and third isomorphisms hold by Proposition \ref{p:C*(C;Phi)}, and 
since when $G$ acts trivially on $M$, $C^*(\calo_p(G);F_M)$ is the 
simplicial cochain complex of the pair $(|\calo_X(G)|,|\calo_Y(G)|)$ with 
coefficients in $M$, and similarly for $C^*(\calo_p^f(G);F_M)$. The second 
isomorphism holds since the inclusions 
$|\calo_{X_0}(G)|\subseteq|\calo_X(G)|$ and 
$|\calo_{Y_0}(G)|\subseteq|\calo_Y(G)|$ are homotopy equivalences by Lemma 
\ref{l:OX}. 

\smallskip

\noindent\textbf{Case 2:} Now let $M$ be an arbitary $\F_pG$-module 
such that $G/C_G(M)$ is a finite $p$-group. Set $\Gamma=G/C_G(M)$, 
regard $M$ as an $\F_p\Gamma$-module, and let $J\le\F_p\Gamma$ be the 
augmentation ideal (or maximal ideal). Then $J$ is the Jacobson radical 
of $\F_p\Gamma$ since the 1-dimensional trivial module is the only 
simple module (see \cite[Lemma 3.14.1]{Benson1}), and hence is 
nilpotent (see \cite[Theorem 1.2.7]{Benson1}). Let $k\ge1$ be such that 
$J^k=0$, and set $M_i=J^iM$ for each $i\ge0$. Then $0=M_k\le 
M_{k-1}\le\cdots\le M_1\le M_0=M$, and $G$ acts trivially on 
$M_i/M_{i+1}$ for all $i\ge0$. By Case 1, the natural homomorphism 
$\Lambda^*(G;M_i/M_{i+1})\too\Lambda^*_f(G;M_i/M_{i+1})$ is an 
isomorphism for all $i$, and so $\Lambda^*(G;M)\cong\Lambda^*_f(G;M)$ 
by repeated application of the exact sequence of Lemma 
\ref{l:Lbd(sh.ex.sq.)} and the five lemma. 

\smallskip

\noindent\textbf{General case:} Set $H=C_G(M)\nsg G$, and let $\chi\:G\too 
G/H$ be the natural map. For each $p$-subgroup $P/H\le G/H$, $P/H$ is 
finite by assumption, and hence the natural homomorphism 
$\Lambda^*(P;M)\too\Lambda^*_f(P;M)$ is an isomorphism by Case 2. So 
$\Lambda^*(G;M)\cong\Lambda^*_f(G;M)$: seen by comparing the spectral 
sequences of Theorem \ref{t:Lb.sp.seq}.
\end{proof}

The following corollary follows directly from Lemma \ref{l:5.12f} and 
Proposition \ref{p:G/CGM}.

\begin{Cor} \label{c:5.12}
Fix a prime $p$. Let $G$ be a locally finite group, and let $M$ be an 
$\F_pG$-module such that all $p$-subgroups of $G/C_G(M)$ are finite. 
Assume, for some finite subgroup $H_0\le G$, that $\Lambda^*(H;M)=0$ for 
each $H\in\Fin(G)$ that contains $H_0$. Then $\Lambda^*(G;M)=0$. In 
particular, this holds if $C_G(M)$ contains an element of order $p$.
\end{Cor}